\newtheorem{thm}{Theorem}[section]
\newtheorem{cor}[thm]{Corollary}
\newtheorem{prop}[thm]{Proposition}
\newtheorem{lem}[thm]{Lemma}
\newtheorem{mainthm}{Theorem}
\theoremstyle{definition}
\newtheorem{defn}[thm]{Definition}
\newtheorem{notn}[thm]{Notation}
\newtheorem{rem}[thm]{Remark}
\newtheorem{rems}[thm]{Remarks}
\let\phi\varphi
\newcommand{\dotprod}[2]{\ensuremath{\langle #1 , #2 \rangle}}
\newcommand{\R}{\mathbb{R}}
\renewcommand{\subset}{\subseteq}
\newcommand{\dvol}{\mathrm{dvol}}
\newcommand{\Ric}{\mathrm{Ric}}
\newcommand{\dist}{d}
\newcommand{\meas}{\mathfrak{m}}
\newcommand{\di}{\mathop{}\!\mathrm{d}}
\DeclareMathOperator{\RCD}{RCD}
\DeclareMathOperator{\CDe}{CD}
\newcommand{\F}{{\mathcal{F}}}
\newcommand{\D}{{\mathcal{D}}}
\newcommand{\Le}{{\mathcal{L}}}
\let\c@equation\c@thm
\numberwithin{equation}{section}
\begin{document}

\title[Universal non-CD of sub-Riemannian manifolds]{Universal non-CD of sub-Riemannian manifolds}

\author{Dimitri Navarro}
\address[Dimitri Navarro]{Department of Mathematics, University of California, Santa Cruz, CA, USA.}
\email{dnavar17@ucsc.edu}

\author{Jiayin Pan}
\address[Jiayin Pan]{Department of Mathematics, University of California, Santa Cruz, CA, USA.}
\email{jpan53@ucsc.edu}

\maketitle

\begin{abstract}
   We prove that a sub-Riemannian manifold equipped with a full-support Radon measure is never $\CDe(K,N)$ for any $K\in \mathbb{R}$ and $N\in (1,\infty)$ unless it is Riemannian. This generalizes previous non-$\CDe$ results for sub-Riemannian manifolds \cite{Juillet_20,Magnabosco_Rossi_AlmostRiem_23,Rizzi-Stefano_2023}, where a measure with smooth and positive density is considered. Our proof is based on the analysis of the tangent cones and the geodesics within. Secondly, we construct new $\RCD$ structures on $\mathbb{R}^n$, named cone-Grushin spaces, that fail to be sub-Riemannian due to the lack of a scalar product along a curve, yet exhibit characteristic features of sub-Riemannian geometry, such as horizontal directions, large Hausdorff dimension, and inhomogeneous metric dilations.
\end{abstract}

\tableofcontents

\parskip=4pt plus 0.5pt

\section{Introduction}\label{sec:intro}

\subsection{Main results}

In their pioneering papers, Lott--Villani  \cite{Lott_Villani_2009} and Sturm \cite{Sturm_I_06,Sturm_II_06} independently introduced a synthetic notion of Ricci curvature lower bounds on metric measure spaces by using the theory of optimal transport. Their combined work led to the theory of $\CDe(K,N)$ spaces, where $K$ and $N$ represent a lower bound on the Ricci curvature and an upper bound on the dimension, respectively. For a complete Riemannian manifold $(M,g)$, the metric measure space $(M,d_g,\mathrm{vol}_g)$ satisfies the $\CDe(K,N)$ condition if and only if $\mathrm{Ric}\ge K$ and $\dim M\le N$. Moreover, the class of $\CDe(K,N)$ spaces is stable under measured Gromov--Hausdorff convergence and thus includes Ricci limit spaces, which were first systematically studied by Cheeger and Colding in their seminal works \cite{CCI,CCII,CCIII}. We refer the readers to \cite{Villani_09} for an introduction to $\CDe(K,N)$ spaces.

An alternative framework extending the Riemannian geometry is sub-Riemannian geometry. On a smooth manifold $M$, a sub-Riemannian structure is given by a (possibly rank-varying) distribution $\mathcal{D}\subseteq TM$ that satisfies the H\"ormander condition and is equipped with a smoothly varying scalar product. Thanks to Chow--Rashevskii theorem, the sub-Riemannian structure induces the Carnot-Carathéodory distance $d_{sR}$ on $M$. When $\mathcal{D}=TM$, one recovers the Riemannian case. We refer the readers to \cite{ABB,Bellaiche_96,Montgomery_book} for background on sub-Riemannian geometry.

A natural question in the field is whether a sub-Riemannian manifold, equipped with a measure, can satisfy curvature-dimension conditions. A series of important works have explored the non-$\CDe$ property for sub-Riemannian manifolds; notably, the works by Juillet \cite{Juillet_20}, Magnabosco--Rossi \cite{Magnabosco_Rossi_AlmostRiem_23}, and Rizzi--Stefani \cite{Rizzi-Stefano_2023}. In these works, the sub-Riemannian manifold is equipped with a measure with a smooth and positive density (in each local chart). The most general result to date is due to Rizzi--Stefani \cite{Rizzi-Stefano_2023}: if $M$ is a sub-Riemannian manifold such that $\mathcal{D}\neq TM$, then for any smooth and positive measure $\meas$ on $M$, the metric measure space $(M,d_{sR},\meas)$ does not satisfy the $\CDe(K,N)$ condition for any $K\in \mathbb{R}$ and $N\in (1,\infty]$.

A different picture emerges if we allow a sub-Riemannian manifold to have boundary points and equip it with a measure that degenerates on the boundary. We recall that the Grushin plane, a classical example in sub-Riemannian geometry \cite[Section 3.1]{Bellaiche_96}, is $\mathbb{R}^2$ with a sub-Riemannian structure whose distribution is generated by the vector fields $\partial_x$ and $x\partial_y$; equivalently, it is the metric completion of the Riemannian metric $dx^2 + x^{-2} dy^2$ defined on $\{x\not= 0\}$. In \cite{Pan-Wei_2022}, Pan and Wei showed that half of the Grushin plane $\{x\ge 0\}$, equipped with a weighted measure
$\meas=x^p dxdy$, is a Ricci limit space for sufficiently large $p$; also see \cite[Section 3]{DHPW_2023} and \cite{Pan_2023} for related constructions and properties. Alternatively, one can verify that this weighted Grushin halfplane is $\CDe(0,N)$ by computing the $N$-Barky--{\'E}mery curvature and showing that the open halfplane $\{x>0\}$ is geodesically convex; see \cite[Section 3.5]{Rizzi-Stefano_2023}. 

We observe that this example of weighted Grushin halfplane does not contradict the above-mentioned non-$\CDe$ result for sub-Riemannian manifolds for two reasons. First, the Grushin halfplane has a boundary; thus, it is not a manifold (without boundary). Secondly, the measure $\meas=x^p dxdy$ degenerates on the boundary $\{x=0\}$, so $\meas$ does not have a positive density. This exposes a gap in our understanding. The existing non-$\CDe$ results apply to sub-Riemannian manifolds (without boundary) but require a smooth positive measure. The existing $\CDe$ example has a degenerated measure but also has a boundary. This leads directly to our main question: \textit{What happens on a sub-Riemannian manifold (without boundary) if we remove the positivity assumption on the measure? Specifically, does the non-$\CDe$ property hold for a sub-Riemannian manifold equipped with an arbitrary Radon measure?}.

As the main result of this paper, we resolve this question by establishing that the non-$\CDe$ property is a universal feature of sub-Riemannian manifolds, regardless of the choice of reference measure, provided it has full support.

\begin{mainthm}\label{mainthm:not_CD}
 Let $(M,d_{sR})$ be a sub-Riemannian manifold such that $\mathcal{D}\neq TM$. Then for any full-support Radon measure $\meas$, the metric measure space  $(M,d_{sR},\meas)$ is not $\CDe(K,N)$ for any $K\in \mathbb{R}$ and $N\in (1,\infty)$. 
\end{mainthm}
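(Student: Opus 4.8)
The plan is to argue by contradiction: assume $(M,d_{sR},\meas)$ is $\CDe(K,N)$ for some $K\in\R$ and $N\in(1,\infty)$, blow up at a carefully chosen point, use stability of the curvature--dimension condition to pass the condition to the metric tangent cone, identify the limiting measure, and finally contradict the Brunn--Minkowski inequality by analysing the geodesics of the tangent cone. The finiteness of $N$ enters in two places: through Bishop--Gromov, which forces $\meas$ to be locally doubling and thereby provides the precompactness needed to take blow-ups, and through the absolute-continuity argument below. Modulo the measure-theoretic issues, this reduces the statement to the non-$\CDe(0,N)$ property of genuinely sub-Riemannian model spaces, which is the substance of \cite{Juillet_20,Magnabosco_Rossi_AlmostRiem_23,Rizzi-Stefano_2023}, there proved directly for a measure with smooth positive density.

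\textbf{Choice of point and blow-up.} First I would select $p\in M$ whose metric tangent cone $(\hat M_p,\hat d_p,\hat p)$ is not a scalar-product space. Since $\mathcal D\neq TM$ such $p$ exists: if the rank of $\mathcal D$ is $<\dim M$ on an open set, take an equiregular point there, so that $\hat M_p$ is a Carnot group of step $\ge 2$; otherwise --- the almost-Riemannian-type situation of \cite{Magnabosco_Rossi_AlmostRiem_23} --- take a point equiregular along the singular stratum, where the nilpotent approximation is still a non-Riemannian homogeneous space. Rescaling $d_{sR}$ by $\lambda\to\infty$ and normalising $\meas$ so that the rescaled unit ball at $p$ has unit mass, local doubling yields a subsequential pointed measured Gromov--Hausdorff limit $(\hat M_p,\hat d_p,\hat\meas,\hat p)$; stability of $\CDe$ together with the scaling $K\rightsquigarrow\lambda^{-2}K\to 0$ shows $(\hat M_p,\hat d_p,\hat\meas)$ is $\CDe(0,N)$, with $\hat\meas$ a non-zero Radon measure whose support, by a doubling lower bound for small balls, is all of $\hat M_p$.

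\textbf{Identifying the limit measure (the main obstacle).} What genuinely goes beyond the cited works --- which assume a smooth positive density from the outset --- is to control $\hat\meas$, since an arbitrary Radon measure need not even be absolutely continuous. The idea is to exploit the rigidity of $\CDe(0,N)$ for $1<N<\infty$: one-dimensional localisation forces $\meas$ to be absolutely continuous with respect to a smooth volume in charts, with a density that is positive almost everywhere (mirroring the one-dimensional fact that a $\CDe(0,N)$ density has concave $(N-1)$-st root). One may then additionally require $p$ to be a Lebesgue point of this density --- compatibly with the genericity requirements above, differentiating along the singular stratum in the almost-Riemannian case --- so that the blow-up $\hat\meas$ is a positive multiple of the canonical homogeneous (Haar, or Popp) measure of $\hat M_p$. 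Thus $(\hat M_p,\hat d_p,\hat\meas)$ is a step-$\ge 2$ Carnot group, or a non-Riemannian nilpotent approximation, equipped with its Haar measure, and is $\CDe(0,N)$.

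\textbf{Contradiction via geodesics of the tangent cone.} Finally I would contradict this by a version of Juillet's argument \cite{Juillet_20} carried out inside $\hat M_p$. In a Carnot group of step $\ge 2$ there are a base point $o$ and a second-layer point $q$ joined to $o$ by an anisotropically large family of geodesics; because the balls $B_r(q)$ are themselves thin in the vertical directions and the intermediate-point map degenerates there, the midpoint set $M_{1/2}\big(B_r(o),B_r(q)\big)$ has Haar measure $o\big(\mathrm{Haar}(B_r(o))\big)$ as $r\to 0$. Since $\mathrm{Haar}(B_r(o))=\mathrm{Haar}(B_r(q))$ by left-invariance, for small $r$ this contradicts the Brunn--Minkowski inequality $\mathrm{Haar}\big(M_{1/2}(A_0,A_1)\big)^{1/N}\ge \tfrac12\big(\mathrm{Haar}(A_0)^{1/N}+\mathrm{Haar}(A_1)^{1/N}\big)$ implied by $\CDe(0,N)$ --- with no dependence on $N$, which is why the failure is uniform over all finite $N$. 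The non-Riemannian nilpotent approximations arising in the almost-Riemannian case are ruled out by the analogous geodesic analysis of \cite{Magnabosco_Rossi_AlmostRiem_23,Rizzi-Stefano_2023}. This completes the proof. I expect the identification of $\hat\meas$ --- controlling, and ultimately ruling out as non-$\CDe$, the blow-up of an arbitrary (possibly singular) full-support measure --- to be the technical heart of the argument.
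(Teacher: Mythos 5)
Your overall frame (argue by contradiction, blow up at a suitably chosen point, use stability of $\CDe$ under rescaling) matches the paper's, but the step you yourself flag as the technical heart --- identifying the blow-up measure $\hat\meas$ as a constant multiple of the Haar/Popp measure of the nilpotent approximation --- is a genuine gap, and not merely an unproved lemma: as a strategy it cannot work in the almost-Riemannian case. There the singular set $S=\{p:\dim\mathcal D_p<\dim M\}$ may be Lebesgue-null, yet the contradiction must be produced at a point of $S$ (at regular points the tangent cone is Euclidean). At such a point no Lebesgue-point argument is available, and even for a measure that is absolutely continuous with a.e.\ positive density the blow-up under the anisotropic dilations $\delta_\lambda$ need not be Haar. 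The paper's own motivating example makes this concrete: on the Grushin plane the full-support measure $|x|^p\,dx\,dy$ is homogeneous under the dilations, so it blows up at a singular point to the same degenerate weighted measure, whose restriction to a half-plane is genuinely $\CDe(0,N)$. A Brunn--Minkowski/Juillet argument \cite{Juillet_20} against the model space \emph{equipped with the actual limit measure} therefore cannot close, and your fallback of citing \cite{Magnabosco_Rossi_AlmostRiem_23,Rizzi-Stefano_2023} does not apply either, since those results assume a smooth positive density on the model. Separately, the claim that $\CDe(K,N)$ alone forces $\meas\ll\mathcal L^n$ with a.e.\ positive density via one-dimensional localisation is unsubstantiated: localisation requires essential non-branching (so you would already need the universal infinitesimal Hilbertianity of \cite{Le_Donne_23}), and even then reassembling needle densities into absolute continuity with respect to a chart volume is not automatic.

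The paper sidesteps the measure entirely. After upgrading $\CDe$ to $\RCD$ via \cite{Le_Donne_23}, the only property of the limit measure ever used is that it has full support (from Bishop--Gromov). Gigli's splitting theorem \cite{Gigli14}, applied to the $n_1$ lines obtained by blowing up normal geodesics through $p$, splits the tangent cone as $\R^{n_1}\times Z$ with $Z$ nontrivial when $n_1<n$. The contradiction then comes from a geodesic in the $Z$-factor: the Monti--Pigati--Vittone blow-up \cite{Monti18} (extended to rank-varying distributions) produces a smooth constant-control limit ray whose initial velocity lies in $\mathcal D_p$, hence agrees to first order with a normal geodesic whose blow-up is confined to $\R^{n_1}\times\{z\}$; comparing the exact splitting distance $\sqrt2\,t$ with the first-order lower bound $2t+o(t)$ yields the contradiction. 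To salvage your route you would have to replace the measure identification by an argument valid for an arbitrary full-support limit measure, which is precisely what the splitting theorem supplies.
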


\begin{rems}
Let us give some remarks on Theorem \ref{mainthm:not_CD}.\\
(1) We clarify that the manifold $M$ in Theorem \ref{mainthm:not_CD} is a manifold without boundary; otherwise, the non-$\CDe$ property no longer holds as we have seen in the weighted Grushin halfplane.\\
(2) The full-support condition on $\mathfrak{m}$ is required. To see this, let us consider the Grushin (full)plane $(\mathbb{R}^2,d_{sR})$ equipped with the measure $${\meas}=m(x) dxdy, \text{ where } m(x)=x^p \cdot \chi_{[0,\infty)}$$
and $\chi_{[0,\infty)}$ denotes the characteristic function of $[0,\infty)$. This measure $\meas$ does not have full-support on $\mathbb{R}^2$. The resulting metric measure space $(\mathbb{R}^2,d_{sR},\meas)$ is $\CDe(0,N)$ for suitable $p$ and $N$.\\
(3) The infinite-dimensional case $N=\infty$ is unclear to us. Our approach to Theorem \ref{mainthm:not_CD} relies on Gigli's splitting theorem for $\RCD(0,N)$ spaces \cite{Gigli13,Gigli14}, which fails when $N=\infty$ as pointed out in \cite[page 6]{Gigli13}.
\end{rems}

Beyond sub-Riemannian manifolds, the non-$\CDe$ property also holds for some particular classes of
sub-Finsler manifolds with smooth positive measures; see  \cite{BMRT_24,Magnabosco_Rossi_sub-Finsler_23,Magnabosco_Rossi_Review_2025} and references therein. Our method does not extend to sub-Finsler manifolds because it depends on the infinitesimal Hilbertian property of sub-Riemannian manifolds \cite{Le_Donne_23}. As a corollary of Theorem \ref{mainthm:not_CD}, we obtain the universal non-$\RCD$ property below for sub-Finsler manifolds.

\begin{cor}\label{cor: subFinsler_not_RCD}
  Let $(M,d_{sF})$ be a sub-Finsler manifold. If there exists a full-support Radon measure $\meas$ such that $(M,d_{sF},\meas)$ satisfies the $\RCD(K,N)$ condition for some $K\in \mathbb{R}$ and $N\in (1,\infty)$, then $(M,d_{sF})$ is Riemannian.
\end{cor}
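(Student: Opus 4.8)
The plan is to deduce Corollary~\ref{cor: subFinsler_not_RCD} from Theorem~\ref{mainthm:not_CD}, using two facts: the $\RCD$ condition makes the space infinitesimally Hilbertian, and an infinitesimally Hilbertian sub-Finsler manifold is automatically sub-Riemannian. So suppose $(M,d_{sF},\meas)$ is $\RCD(K,N)$ for some $K\in\mathbb{R}$ and $N\in(1,\infty)$, with $\meas$ a full-support Radon measure. Then $(M,d_{sF},\meas)$ is in particular $\CDe(K,N)$, and its Cheeger energy $\mathrm{Ch}$ is a quadratic form.

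The first step is to upgrade this global Hilbertianity to a pointwise statement. Write $\mathcal{D}$ for the distribution and $\|\cdot\|_x$ for the sub-Finsler norm on the fiber $\mathcal{D}_x$, with dual norm $\|\cdot\|^*_x$ on $\mathcal{D}_x^*$. For a Lipschitz function $f$ the minimal weak upper gradient at $x$ equals $\|df|_{\mathcal{D}_x}\|^*_x$, so $\mathrm{Ch}(f)=\int_M \big(\|df|_{\mathcal{D}_x}\|^*_x\big)^2\,\di\meas(x)$, and the analogous formula holds for the Cheeger energy localized to any Borel set. Since $\mathrm{Ch}$ is local and quadratic, its density must obey the parallelogram identity for $\meas$-a.e.\ $x$ — that is, for each fixed pair of test functions $f,g$, the parallelogram law for $\|\cdot\|^*_x$ applied to the horizontal differentials $df|_{\mathcal{D}_x}$ and $dg|_{\mathcal{D}_x}$ holds at $\meas$-a.e.\ $x$. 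Running over a countable dense family of pairs whose horizontal differentials exhaust $\mathcal{D}_x^*\times\mathcal{D}_x^*$, and using continuity of the sub-Finsler structure together with the full support of $\meas$, one concludes that $\|\cdot\|_x$ satisfies the parallelogram law, hence is induced by a scalar product, at \emph{every} $x$. Thus $(M,d_{sF})$ coincides with a sub-Riemannian manifold $(M,d_{sR})$ carrying the same distribution $\mathcal{D}$; this is essentially the infinitesimal-Hilbertianity analysis behind \cite{Le_Donne_23}, which one may also invoke directly.

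With this reduction the conclusion is immediate: $(M,d_{sR},\meas)$ is $\RCD(K,N)$, hence $\CDe(K,N)$; if $\mathcal{D}\neq TM$, then Theorem~\ref{mainthm:not_CD} asserts that $(M,d_{sR},\meas)$ is \emph{not} $\CDe(K,N)$ for any $K\in\mathbb{R}$ and $N\in(1,\infty)$, a contradiction. Therefore $\mathcal{D}=TM$, and a sub-Riemannian manifold whose distribution is the whole tangent bundle is precisely a Riemannian manifold, so $(M,d_{sF})$ is Riemannian. I expect the only genuine obstacle to be the first step: converting the global quadraticity of $\mathrm{Ch}$ into pointwise Euclideanity of the fiber norms, and doing so for an \emph{arbitrary} full-support Radon measure rather than a smooth positive one. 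This also explains why one cannot simply cite \cite{Le_Donne_23} unless its hypotheses already allow such $\meas$; otherwise one must verify that the formula $\mathrm{Ch}(f)=\int_M(\|df|_{\mathcal{D}_x}\|^*_x)^2\,\di\meas$ and its locality persist at this generality and re-run the localization argument above.
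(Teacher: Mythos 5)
Your reduction to Theorem~\ref{mainthm:not_CD} in the last paragraph is exactly what the paper does, but the route you take to show that the sub-Finsler norm is pointwise Euclidean is genuinely different from the paper's, and it has a gap at its crucial first step.

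The gap is the identity $\mathrm{Ch}(f)=\int_M\bigl(\|df|_{\D_x}\|^*_x\bigr)^2\,\di\meas(x)$ for an \emph{arbitrary} full-support Radon measure. The inequality ``minimal weak upper gradient $\le$ dual norm of the horizontal differential'' is easy, but the reverse inequality is precisely the hard content of measure-independent Sobolev calculus: it is what \cite{Le_Donne_23} proves in the sub-Riemannian case, and no sub-Finsler analogue is invoked or established here. You cannot ``invoke \cite{Le_Donne_23} directly,'' since that theorem takes a sub-Riemannian structure as hypothesis and concludes Hilbertianity of $H^{1,2}$; you need the converse direction (quadraticity of $\mathrm{Ch}$ forces the fiber norms to be Hilbertian), and without the identification of $\mathrm{Ch}$ with the concrete horizontal energy, a quadratic Cheeger energy is a priori compatible with a non-Euclidean fiber norm that the abstract Sobolev calculus simply ``does not see'' for the given $\meas$. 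You flag this yourself at the end, but flagging it does not close it, and it is the whole difficulty. (The subsequent localization and density steps are comparatively routine: a full-measure set is dense when $\meas$ has full support, and differentials of a countable family of smooth functions span $\D_x^*$ at every point.)

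The paper avoids this analytic identification entirely by arguing metrically through tangent cones. From the $\RCD$ hypothesis and \cite{gigli_euclidean_2015}, $\meas$-a.e.\ point has a Euclidean metric measure tangent; intersecting with the open dense set of equiregular points gives a dense set $\mathcal{R}$ where the tangent cone is simultaneously a sub-Finsler Carnot group \cite{A_LD_NG_23} and isometric to $\R^n$. Equiregularity forces the Hausdorff dimension of the tangent to equal its homogeneous dimension, so isometry with $\R^n$ forces the Carnot group to be step one, i.e.\ Finsler, and then the Euclidean isometry forces the norm to satisfy the parallelogram law at each $p\in\mathcal{R}$; density and continuity propagate this to all of $M$. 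If you want to salvage your approach, you would have to prove the sub-Finsler, arbitrary-Radon-measure version of the Cheeger-energy representation formula, which is a substantial theorem in its own right; otherwise the tangent-cone argument is the correct fix.
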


Our main theorem demonstrates a fundamental incompatibility between sub-Riemannian structures and the $\CDe$ condition on manifolds regardless of the reference measure. This naturally raises the question: how ``close" can a manifold be to sub-Riemannian while still satisfying the $\CDe$ or $\RCD$ condition? As a secondary result, we construct new $\RCD$ structures on $\mathbb{R}^n$ that fail to be sub-Riemannian due to the lack of scalar product along a one-dimensional curve in $\mathbb{R}^n$, yet exhibit characteristic features of sub-Riemannian geometry, such as horizontal directions, inhomogeneous metric dilations, and large Hausdorff dimension. 

These examples extend the constructions of Pan and Wei \cite{Pan-Wei_2022, Pan_2023}, which were limited to manifolds with boundary (halfplanes or hemispheres). Our construction, to our knowledge, provides the first examples of $\RCD$ structures on manifolds without boundary where the Hausdorff dimension strictly exceeds the topological dimension.

\begin{mainthm}\label{mainthm:exmp}
    Given any $n\ge 4$ and $\alpha>0$, there is an $\RCD(0,N)$ structure $(\mathbb{R}^n,d,\meas)$, where $N$ is sufficiently large, such that:\\
    (1) $d$ is the metric completion of a smooth Riemannian metric $g$ on $\mathbb{R}^n-C$, where 
    $$C=\{(0,y)\in \mathbb{R}^{n-1}\times \R\ |\ y\in \mathbb{R}\}$$ is a coordinate axis in $\mathbb{R}^n$, but $d$ does not come from any sub-Riemannian metric on $\mathbb{R}^n$.\\
    (2) The set of all horizontal directions
    $$\Delta:=\bigcup_{x\in \mathbb{R}^n} \{ \gamma'(0)\in T_x \R^n |\  \gamma \text{ is a $C^1$-curve of finite length with $\gamma(0)=x$} \} $$
    coincides with a (rank-varying) proper distribution generated by a finite family of smooth vector fields that satisfies the H\"ormander condition.\\
    (3) The metric space $(\mathbb{R}^n,d)$ admits a family of metric dilations; specifically, for each $\lambda>0$, the map
    $$\delta_\lambda: \mathbb{R}^n\to \mathbb{R}^n,\quad (x,y)\to (\lambda x, \lambda^{1+\alpha} y),$$
    where $x\in\R^{n-1}$ and $y\in \R$, satisfies
    $$d(\delta_\lambda(x,y),\delta_\lambda(x',y'))=\lambda\cdot d((x,y),(x',y'))$$
    for all $(x,y),(x',y')\in \R^n$.\\
    (4) The singular curve $C$ has Hausdorff dimension $1+\alpha$; in particular, when $\alpha>n-1$, $(\mathbb{R}^n,d)$ has Hausdorff dimension $1+\alpha>n$.
\end{mainthm}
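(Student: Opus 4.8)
The plan is to realize the desired space as a singular warped-product metric on $M_0:=\R^n\setminus C$, with a cone angle in the $x$-directions tuned so that the $N$-Bakry--\'Emery tensor is nonnegative. Writing $\R^n=\R^{n-1}_x\times\R_y$ and using polar coordinates $x=r\omega$, $r=|x|>0$, $\omega\in S^{n-2}$ on $\R^{n-1}\setminus\{0\}$, I would take on $M_0$
\[ g:=\di r^2+\beta^2 r^2 g_{S^{n-2}}+r^{-2\alpha}\di y^2,\qquad \meas:=r^{\mu}\,\vol_g, \]
with parameters $\beta\in(0,1)$, $\mu>0$, $N<\infty$ to be fixed. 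Then $g$ is a smooth incomplete Riemannian metric on $M_0$ --- a genuine non-flat metric cone in the $x$-directions, since $\beta\neq1$ --- and $\meas$ is a full-support Radon measure on $\R^n$ (local finiteness near $C$ amounts to $\mu>\alpha-n+1$, which will hold). Since $(g,\meas)$ is a doubly warped product over $(0,\infty)_r$ with fibers $(S^{n-2},g_{S^{n-2}})$ and $(\R,\di y^2)$ and density $r^{\mu}$, the Bakry--\'Emery tensor $\Ric_{N,\meas}$ is block-diagonal for the splitting $\partial_r\oplus TS^{n-2}\oplus T\R_y$, and a direct computation of the warped-product curvature shows that $\Ric_{N,\meas}\geq0$ on $M_0$ is equivalent, as $N\to\infty$, to the three inequalities $\mu\geq\alpha(\alpha+1)$, $\mu\leq(n-3)(\beta^{-2}-1)+\alpha$, and $\mu\geq\alpha+3-n$. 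The last is implied by the first since $n\geq4$, and the first two are compatible with strict inequality precisely when $\beta^{2}<\tfrac{n-3}{n-3+\alpha^{2}}$, which has a solution $\beta\in(0,1)$ exactly because $n-3\geq1$; this is where the hypothesis $n\geq4$ is used (for $n=3$ the fiber $S^1$ is flat and the cone has no effect). I would then fix such a $\beta$, choose $\mu\in(\alpha(\alpha+1),(n-3)(\beta^{-2}-1)+\alpha)$, and take $N$ large enough that $\mu-\alpha(\alpha+1)\geq\mu^{2}/(N-n)$; this gives $\Ric_{N,\meas}\geq0$ on $M_0$.

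The second step is to pass to the metric completion. One checks that the completion of $(M_0,\dist_g)$ is naturally $(\R^n,d)$ carrying its usual topology: a $\dist_g$-Cauchy sequence not converging in $M_0$ must have $|x|\to0$, and the blow-up of $g$ near $C$ forces its $y$-coordinate to converge as well (via a scaling estimate $\dist_g\gtrsim|\Delta y|^{1/(1+\alpha)}$ near $C$), so the completion adds precisely $C$. To see that $(\R^n,d,\meas)$ is $\CDe(0,N)$, one uses again that $g$ blows up along $C$: minimizing geodesics between two points of $M_0$ stay uniformly away from $C$, and since $\meas(C)=0$, Wasserstein geodesics between compactly supported $\meas$-absolutely continuous measures remain supported in $M_0$, so the $\CDe(0,N)$ displacement-convexity inequality reduces to the one on the weighted manifold $(M_0,g,\meas)$, which holds because $\Ric_{N,\meas}\geq0$ --- the analogue, for a codimension-$(n-1)$ singular set, of the geodesic convexity of the open halfplane used in the weighted Grushin case (cf.\ \cite{Rizzi-Stefano_2023,Pan-Wei_2022}). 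Finally, $C$ has zero $2$-capacity for $\meas$ (its codimension is $n-1\geq3$, and a logarithmic cutoff in $r$ works since $\mu+n-3-\alpha>0$), so $W^{1,2}(\R^n,d,\meas)=W^{1,2}(M_0,g,\meas)$, on which the Cheeger energy $\tfrac12\int|\nabla f|_g^{2}\di\meas$ is quadratic; hence $(\R^n,d,\meas)$ is $\RCD(0,N)$.

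The third step is to verify (1)--(4). For (3): $\delta_\lambda$ acts on $M_0$ by $\di r\mapsto\lambda\di r$, $g_{S^{n-2}}\mapsto g_{S^{n-2}}$, $\di y\mapsto\lambda^{1+\alpha}\di y$ while sending $r\mapsto\lambda r$ in the coefficients, so $\delta_\lambda^{*}g=\lambda^{2}g$; hence $\delta_\lambda$ rescales $\dist_g$, and by density its extension rescales $d$, by $\lambda$. For (4): combining (3) with $y$-translation invariance gives $d((0,y),(0,y'))=c_0|y-y'|^{1/(1+\alpha)}$ on $C$, a snowflake metric of Hausdorff dimension $1+\alpha$, so $\dim_{\mathcal{H}}(\R^n,d)\geq\dim_{\mathcal{H}}(C,d|_C)=1+\alpha$, which exceeds the topological dimension $n$ once $\alpha>n-1$. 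For (1): $g$ is smooth on $\R^n-C$ by construction, and $d$ is not sub-Riemannian --- if $(\D,h)$ were a sub-Riemannian structure on $\R^n$ with $\dist_{sR}=d$, then on $M_0$, where $d=\dist_g$ is locally bi-Lipschitz to the Euclidean distance, the ball-box theorem forces $\D=T\R^n$ and hence $h=g$, so the (globally smooth) sub-Riemannian Hamiltonian $H$ would restrict on $M_0$ to the dual metric $g^{-1}$, whose $x$-block $\beta^{-2}I_{n-1}+(1-\beta^{-2})\,x x^{\mathsf{T}}/|x|^{2}$ has no continuous extension across $C$ when $\beta\neq1$ --- this is the ``lack of a scalar product along $C$'' --- a contradiction. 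For (2): off $C$ the space is Riemannian, so $\Delta=T\R^n$ there; along $C$, an analysis of $C^1$ curves of finite $d$-length issuing from a point of $C$ shows that motion in the $x$-directions has finite length while motion in the $y$-direction has infinite length (because of the factor $r^{-2\alpha}$), so $\Delta_p$ is the hyperplane $\{\di y=0\}$ for $p\in C$; thus $\Delta$ is the rank-varying distribution generated by the smooth vector fields $\partial_{x_1},\dots,\partial_{x_{n-1}},x_1\partial_y,\dots,x_{n-1}\partial_y$, which is proper and satisfies H\"ormander's condition since $[\partial_{x_i},x_i\partial_y]=\partial_y$.

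The step I expect to be the main obstacle is the reduction in the second step: making rigorous that the $\CDe(0,N)$ condition on the completed, singular space $(\R^n,d,\meas)$ is governed entirely by the Bakry--\'Emery bound on the open Riemannian part $M_0$ --- that is, that optimal transport between $\meas$-absolutely continuous measures never has to concentrate on, or even interact with, the singular curve $C$. Once this localisation onto $M_0$ is established, together with the softer capacity estimate for infinitesimal Hilbertianity, properties (1)--(4) follow from the explicit form of $g$.
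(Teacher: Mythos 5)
Your metric ansatz --- a doubly warped product of a cone over a shrunken round sphere with a Grushin factor $r^{-2\alpha}\,dy^2$ --- is the same as the paper's cone--Grushin metric, and your treatment of the completion and of properties (1), (3), (4) and (2) matches the paper's (your direct proof that $d$ is not sub-Riemannian, via the discontinuity of the dual metric's $x$-block across $C$, is a reasonable alternative to the paper's appeal to Theorem \ref{mainthm:not_CD}). The genuine gap is in the central step, the verification of $\RCD(0,N)$. Your route --- nonnegative $N$-Bakry--\'Emery curvature on $M_0=\R^n-C$ plus the assertion that ``minimizing geodesics between two points of $M_0$ stay uniformly away from $C$'', so that displacement convexity localizes onto the smooth weighted manifold --- rests on the geodesic convexity of $M_0$ in $(\R^n,d)$, which you state but do not prove. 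You yourself flag this as ``the main obstacle'', and it is not a routine point: the paper's closing remark in Section \ref{sec:exmp} says precisely that the $N$-Bakry--\'Emery curvature on $\Omega$ can be made positive for small cone parameter and large weight, but that ``it is unclear to the authors whether $\Omega$ is always geodesically convex in $(\mathbb{R}^n,d)$'', and that only under that convexity can one run the Rizzi--Stefani localization you invoke. So the one genuinely hard assertion of the theorem is left resting on exactly the statement the authors could not establish. (A heuristic in its favor: the cross-section $cS^{n-2}$ has diameter $c\pi<\pi$, so pure cone geodesics miss the vertex, and the Grushin factor penalizes small $r$; but this must be proved for the coupled metric.)

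The paper circumvents the issue entirely by realizing the cone--Grushin space as a Ricci limit space: it builds a complete triply warped product $[0,\infty)\times_f S^m\times_g S^k\times_h S^1$ with $f(r)=r(1+r^2)^{-1/4}$, $g(r)=\tfrac{\pi}{2}cr/\arctan r$, $h(r)=(1+r^2)^{-\alpha/2}$, chooses $m$ large and $c\in(0,1)$ small so that $\Ric>0$, passes to the universal cover to unwrap $S^1$ into $\R$, and checks that the blow-downs $\lambda^{-2}\tilde g$ converge to $dt^2+(ct)^2ds_k^2+t^{-2\alpha}dw^2$; the asymptotic cone, equipped with a renormalized limit measure, is then $\RCD(0,N)$ with $N=m+k+2$ by stability. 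To complete your argument you would need either to supply a proof of the geodesic convexity of $M_0$ (together with the capacity/Sobolev identification you sketch for infinitesimal Hilbertianity), or to replace the Bakry--\'Emery step by a limit construction of this kind.
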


By Theorem \ref{mainthm:not_CD}, one cannot find a scalar product on $\Delta|_{C}$ that produces the same $d$; otherwise, we would obtain a sub-Riemannian structure that satisfies the $\CDe$ condition. In fact, around any singular point $x\in C$, the Riemannian metric $g$ is a doubly warped product of a cone metric and a Grushin metric. These $\RCD$ structures are built as Ricci limit spaces. 

To close the introduction, let us mention that, due to the failure of the $\CDe(K,N)$ condition in the sub-Riemannian setting, other synthetic curvature conditions have been explored for sub-Riemannian manifolds, for example, the measure concentration property \cite{Badreddine_Rifford_20, Barilari_Rizzi_18, BMRT_24, Borza_Rizzi_25, BMRT25, Juillet_09, Rifford_13, Rizzi_16}, sub-Riemannian Bakry-{\'E}mery curvature \cite{BG17, Barilari_Rizzi_20}, quasi curvature-dimension condition \cite{Milman_21}, and $\CDe(\beta,N)$ condition in the setting of gauge metric measure spaces \cite{Barilari_Mondino_Rizzi_22}, which unifies the Riemannian and sub-Riemannian cases, inspired by \cite{Barilari_Rizzi_19} and \cite{BKS18}.

\subsection{Proof strategy of Theorem \ref{mainthm:not_CD}}

The proof of our Theorem \ref{mainthm:not_CD} requires a new approach, as prior non-$\CDe$ results \cite{Juillet_20,Magnabosco_Rossi_AlmostRiem_23,Rizzi-Stefano_2023} fundamentally rely on the positivity of the measure. Our strategy is to shift focus from the measure to the underlying metric geometry, specifically by analyzing the structure of metric tangent cones and the behavior of geodesics within them. This approach has the advantage of being largely insensitive to the reference measure. Below are the main ingredients involved in our proof of Theorem \ref{mainthm:not_CD}:\\
$\ \ \bullet$ The tangent cone of $M$ at any point is a homothetic sub-Riemannian structure on $\mathbb{R}^n$ \cite{Bellaiche_96};\\
$\ \ \bullet$ Gigli's splitting theorem for $\RCD(0,N)$ spaces \cite{Gigli13,Gigli14};\\
$\ \ \bullet$ A regularity result on (abnormal) geodesics in sub-Riemannian manifolds under suitable blow-up by Monti, Pigati, and Vittone \cite{Monti18};\\
$\ \ \bullet$ A universal Hilbertian result by Le Donne, Lučić, and Pasqualetto stating that a sub-Riemannian manifold with any Radon measure is infinitesimally Hilbertian \cite{Le_Donne_23}.

To illustrate the main idea to prove Theorem \ref{mainthm:not_CD}, let us first assume that all minimizing geodesics are of class $C^1$. We argue by contradiction and suppose that $(M,d_{sR},\meas)$ satisfies the $\CDe(K,N)$ condition. Thanks to the universal Hilbertian property \cite{Le_Donne_23}, $(M,d_{sR},\meas)$ is an $\RCD(K,N)$ space. Let $p\in M$ be a point such that $n_1:=\dim(\mathcal{D}_p)< \dim M=:n$. We consider a tangent cone of $M$ at $p$, which is a sub-Riemannian manifold $(\mathbb{R}^{n},\hat{d})$. Equipped with a limit renormalized measure $\hat{\meas}$, the metric measure tangent cone $(\mathbb{R}^n,\hat{d},\hat{\meas})$ is $\RCD(0,N)$, thus Gigli's splitting theorem applies. The blow-up of normal geodesics emanating at $p$ provides $n_1$ many independent lines in $(\mathbb{R}^n,\hat{d})$. Hence $(\mathbb{R}^n,0,\hat{d},\hat{\meas})$ is isomorphic to a product $(\mathbb{R}^{n_1},0,d_E,\mathcal{L})\otimes (Z,z,d_Z,\meas_Z)$, where $Z$ is not a point due to the hypothesis $n_1<n$. We look for a contradiction between this metric splitting structure $\mathbb{R}^{n_1}\times Z$ and the sub-Riemannian structure $(\R^n,\hat{d})$. The contradiction arises from analyzing geodesics in $Z$. Any geodesic $\gamma$ of $Z$ starting at $z$ is also a geodesic in the sub-Riemannian manifold $(\mathbb{R}^n,\hat{d})$. Due to the sub-Riemannian structure on $\mathbb{R}^n$, the initial tangent vector of $\gamma$ should belong to the $\mathbb{R}^{n_1}$-factor, and this should lead to a contradiction since $\gamma$ is contained in the other orthogonal factor $\{0\}\times Z$. 

The primary difficulty in making this sketch rigorous is the potential lack of regularity of minimizing geodesics. In fact, it is unknown whether all minimizing geodesics in a sub-Riemannian manifold are of class $C^1$. By the Pontryagin maximum principle, a minimizing geodesic in a sub-Riemannian manifold is either \textit{normal} or \textit{abnormal}, where the latter case was first constructed by Richard Montgomery \cite{Montgomery_1994}. Although normal geodesics must be smooth, no further regularity beyond the Lipschitz one is known for abnormal geodesics. Recently, surprising examples of non-smooth abnormal geodesics have been constructed \cite{CJMRS_2025}; these minimizers are of class $C^2$ but not $C^3$. 

To circumvent this regularity issue on the geodesic $\gamma$, we apply a result of Monti--Pigati--Vittone \cite{Monti18}. Their result ensures that, even when $\gamma$ is abnormal, there is a suitable blow-up sequence such that $\gamma$ converges to an admissible curve of constant control (in particular, a smooth curve) under this blow-up. Then we can further construct a normal geodesic in $M$ that converges to a limit geodesic with the same initial tangent vector when passed to the tangent cone; moreover, this limit geodesic stays in $\mathbb{R}^{n_1}\times \{z\}$. Now we have two geodesics in $(\mathbb{R}^n,\hat{d})$ that are contained in two orthogonal factors but simultaneously also tangential at certain small scales, resulting in the desired contradiction. 

The result in \cite{Monti18} is stated for distributions of constant rank. As our setting includes rank-varying distributions, for the reader's convenience, we provide a proof of the necessary extension in Appendix \ref{sec:appendix} by modifying the argument in \cite{Monti18,Monti17}. 

\emph{Acknowledgments.}

The authors are thankful to Richard Montgomery for introducing sub-Riemannian geometry to them and for helpful discussions on sub-Riemannian geodesics. The authors would like to thank Mattia Magnabosco for discussions on sub-Finsler manifolds.

J. Pan is partially supported by the National Science Foundation DMS-2304698 and Simons Foundation Travel Support for Mathematicians. 

\section{Preliminaries}\label{sec:pre}
\subsection{Sub-Riemannian manifolds}\label{sec:prelin_sub-riem}

Let $M^n$ be a connected smooth manifold without boundary. A \emph{sub-Riemannian structure} on $M$ is a finite family of smooth vector fields $\F=\{X_1,\cdots, X_m\}\subset\mathfrak{X}(M)$ satisfying the \emph{H\"ormander condition}:
\begin{equation*}
    \forall p\in M,\  \mathrm{Lie}_p(\F)\coloneqq\{X(p)\mid X\in\mathrm{Lie}(\F)\}=T_pM,
\end{equation*}
where $\mathrm{Lie}(\F)\subset\mathfrak{X}(M)$ is the Lie algebra generated by $\F$. We refer to the pair $(M,\F)$ as a \emph{sub-Riemannian manifold}. The \emph{set $\D\subset\mathfrak{X}(M)$ of horizontal vector fields} is the $\mathcal{C}^{\infty}(M)$-module generated by $\F$ and $\D^i\coloneqq \D^{i-1}+[\D,\D^{i-1}]$ is defined inductively ($i\ge2$). Given a point $p\in M$, the \emph{distribution $\D_p\coloneqq\{X(p),X\in \D\}$ at $p$} comes equipped with the \emph{sub-Riemannian scalar product $\dotprod{\cdot}{\cdot}_p$ at $p$} (see \cite[Exercise 3.9]{ABB}), whose induced norm $\lvert \cdot\rvert_p$ satisfies: 
\begin{equation*}
    \forall v\in \D_p, \ \lvert v\rvert_p=\lvert v^*\rvert_{\R^m},
\end{equation*}
where $v^*\in\R^m$ is the \emph{minimal control of $v$}, i.e. the unique element such that:
\begin{equation*}
    \lvert v^*\rvert_{\R^m}=\mathrm{min}\Big\{\lvert u\rvert_{\R^m}\Big|\ u=(u_1,...,u_m)\in\R^m,v=\sum_{i=1}^mu_iX_i(p)\Big\}. 
\end{equation*}
The \emph{flag at a point $p\in M$} is the following sequence:
\begin{equation*}
    \{0\}\subset \D_p\subset \cdots\subset \D^{r(p)}_p=T_pM,
\end{equation*}
where  $ \D^i_p\coloneqq\{X(p),X\in \D^i\}$ and $r(p)$ is the \emph{degree of non-holonomy at $p$}, i.e. the smallest integer $i$ such that $\D^i_p=T_pM$. We denote $n_i(p)\coloneqq\dim(\D^i_p)$ ($i\ge1$) the \emph{flag dimensions at $p$} and, for  $n_{i-1}(p)+1\le j\le n_i(p)$, $\omega_j(p) \coloneqq  i$ the \emph{$j$-th weight at $p$} (where $n_0(p)\coloneqq 0$ by convention).

\begin{rem}
    Given a point $p\in M^n$ and any basis $\{v_1,\cdots, v_n\}$ of $T_pM$ adapted to the flag at $p$, $\omega_j(p)$ is the smallest integer $i$ such that $v_j\in \D^i_p$, or, in other words, the smallest number of Lie brackets of horizontal vector fields needed to create $v_j$.
\end{rem}

A curve $\gamma\colon I\to M$ is \emph{admissible} if there exists a \emph{control} $u\in L^{\infty}_{\mathrm{loc}}(I,\R^m)$ such that, for a.e. $t\in I$, we have:
\begin{equation*}
    \dot{\gamma}(t)=\sum_{i=1}^mu_i(t)X_i(\gamma(t)).
\end{equation*}
There always exists a unique \emph{minimal control} $u^*\in L_{\mathrm{loc}}^{\infty}(I,\R^m)$ such that $\lvert \dot{\gamma}(t)\rvert_{\gamma(t)}=\lvert u^*(t)\rvert_{\R^m}$ holds for a.e. $t\in I$. When the interval $I$ is finite, we denote:
\begin{equation*}
    \Le(\gamma)=\int_I\lvert\dot{\gamma}(t)\rvert_{\gamma(t)}\di t 
\end{equation*}
the \emph{length of $\gamma$}.

\begin{rem}
    The length functional is invariant under Lipschitz re-parametrization, and any admissible curve may be re-parametrized to have unit speed.
\end{rem}

Given two points $p,q\in M$, we define the \emph{sub-Riemannian distance $d_{\F}$ induced by $\F$}:
\begin{equation*}
    d_{\F}(p,q)\coloneqq\inf\{\Le(\gamma)|\ \gamma\colon[0,1]\to M \text{ admissible},\gamma(0)=p, \gamma(1)=q\}.
\end{equation*}
Thanks to the Chow--Rashevskii theorem (see \cite[Theorem 3.31]{ABB}), $d_{\F}$ metrizes the topology of $M$.
    
\begin{rem}\label{rem: most general definition of sub-Riemannian}
    A sub-Riemannian structure on a smooth manifold $M$ is sometimes defined as a pair $(U,f)$, where $U$ is a Euclidean vector bundle over $M$ and $f\colon U\to TM$ is a vector bundle homomorphism, such that the set of horizontal vector fields $\mathcal{D}\coloneqq\{f(\sigma),\sigma\text{ section of }U\}\subset\mathfrak{X}(M)$ satisfies the H\"ormander condition. However, thanks to \cite[Corollary 3.27]{ABB}, given such a pair $(U,f)$, there always exists a finite family $\F\subset\mathfrak{X}(M)$ satisfying the H\"ormander condition and such that $(M,\F)$ and $(U,f)$ are equivalent as sub-Riemannian structures (see \cite[Definition 3.18]{ABB}). Therefore, our definition aligns with the most general one presented in the literature.
\end{rem}

\subsection{Nilpotent approximation}

For the rest of this section, we fix a sub-Riemannian manifold $(M,\F)$ and a point $p\in M$, where $\F=\{X_1,\cdots, X_m\}$ is our sub-Riemannian structure. Thanks to \cite[Theorem 4.15]{Bellaiche_96}, there exists a system of \emph{privileged coordinates $\phi$ at $p$}, i.e. a system of coordinates $\phi = (x_1,\cdots,x_n)\colon U\to V\subset\R^n$ centered at $p$ such that the basis $\{\partial_{x_1}(p),\cdots,\partial_{x_n}(p)\}$ is adapted to the flag at $p$ and the order of $x_i$ at $p$  is exactly $\omega_i=\omega_i(p)$ (i.e. $x_i(q)=O_{q\to p}(  d_{\F}(p,q)^{\omega_i})$ and $x_i(q)\neq O_{q\to p}(  d_{\F}(p,q)^{\omega_i+1})$). Such privileged coordinates induce a \emph{$1$-parameter family of dilation $\{\delta_{\lambda}\}_{\lambda>0}$}:
\begin{equation*}
    \delta_{\lambda}(x)=(\lambda^{\omega_1}x_1,\cdots,\lambda^{\omega_n}x_n),
\end{equation*}
and a \emph{pseudo-norm} $\lVert x \rVert\coloneqq \lvert x_1\rvert^{1/{\omega_1}}+\cdots+\lvert x_n\rvert^{1/{\omega_n}}$, for $x=(x_1,\cdots,x_n)\in\R^n$.

\begin{rem}
    Thanks to our system of privileged coordinates, we may identify points $q\in U$ with their image $\phi(q)$, vector fields $X\in \mathfrak{X}(U)$ with their push-forward $\phi_{*}X$, and ${\dist_{\F}}_{\lvert U}$ with $\phi_{*}{\dist_{\F}}_{\lvert U}$. In particular, $p$ is identified with $0\in\R^n$.
\end{rem}

Thanks to \cite[Theorem 5.19]{Bellaiche_96}, the following decomposition holds for every $1\le i \le m$:
\begin{equation}\label{eq:nilp approx}
    X_i=\hat{X}_i+R_i,
\end{equation}
where $\delta_{\lambda}^*\hat{X}_i=\lambda^{-1}\hat{X}_i$ ($\lambda>0$) and $R_i(0)=0$. The \emph{nilpotent approximation of $\F$ at $p$} is the family $\hat{\F}\coloneqq\{\hat{X}_1,\cdots,\hat{X}_m\}\subset\mathfrak{X}(\R^n)$ which satisfies the H\"ormander condition (see \cite[Proposition 5.17]{Bellaiche_96}) and induce a sub-Riemannian distance $\hat{d}\coloneqq  d_{\hat{\F}}$ on $\R^n$.

\begin{rem}\label{rem: nilp distrib at 0 = distrib at p}
    Note that, identifying $p$ with $0$, we have $X_i(0)=\hat{X}_i(0)$ ($1\le i\le m$). In particular, $(M,\F)$ and $(\R^n,\hat{\F})$ have the same distribution at $0$ with the same scalar product.
\end{rem}

The following theorem will play a crucial role in the subsequent parts (see \cite[Theorem 10.65]{ABB} for a proof).

\begin{thm}\label{thm: uniform convergence of distances}
    The rescaled distances $\dist_{\lambda}\coloneqq\lambda\dist_{\F}(\delta_{\lambda^{-1}}\cdot,\delta_{\lambda^{-1}}\cdot)$ converge locally uniformly to $\hat{d}$ on $\R^n\times\R^n$ as $\lambda\to\infty$.
\end{thm}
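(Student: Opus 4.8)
The plan is to reduce the statement to the convergence of the associated frames and then run a standard compactness argument for length functionals, whose only delicate ingredient is a ball-box estimate uniform in $\lambda$.

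\textbf{Step 1: reformulation via rescaled frames.} First I would note that $\dist_\lambda$ is itself a sub-Riemannian distance, namely $\dist_\lambda=d_{\F^\lambda}$ for the rescaled frame $\F^\lambda=\{X_1^\lambda,\dots,X_m^\lambda\}$ with $X_i^\lambda:=\lambda^{-1}(\delta_\lambda)_*X_i$: pushing the structure forward by the diffeomorphism $\delta_\lambda$ replaces $d_\F$ by $d_\F(\delta_{\lambda^{-1}}\cdot,\delta_{\lambda^{-1}}\cdot)$, and rescaling a frame by $\lambda^{-1}$ multiplies lengths by $\lambda$. Substituting the decomposition \eqref{eq:nilp approx} $X_i=\hat X_i+R_i$ and using $\delta_\lambda^*\hat X_i=\lambda^{-1}\hat X_i$, equivalently $(\delta_\lambda)_*\hat X_i=\lambda\hat X_i$, one gets $X_i^\lambda=\hat X_i+\lambda^{-1}(\delta_\lambda)_*R_i$; writing $R_i=\sum_j r_{ij}\partial_{x_j}$ with $r_{ij}(x)=O(\lVert x\rVert^{\omega_j})$, the coefficients of the remainder in coordinates are $\lambda^{\omega_j-1}r_{ij}(\delta_{\lambda^{-1}}\cdot)=O(\lambda^{-1}\lVert\cdot\rVert^{\omega_j})$, and the same holds for all derivatives. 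Hence $\F^\lambda\to\hat\F$ in $C^\infty_{\mathrm{loc}}(\R^n)$ as $\lambda\to\infty$, and it remains to show $d_{\F^\lambda}\to\hat d$ locally uniformly.

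\textbf{Step 2: uniform ball-box and a priori bounds.} The analytic input I would isolate is a ball-box estimate uniform in $\lambda$: on a fixed compact neighborhood $K$ of $0$ there exist $0<c\le C<\infty$, independent of $\lambda$ for $\lambda$ large, with $c\,\rho(x,y)\le d_{\F^\lambda}(x,y)\le C\,\rho(x,y)$ for all $x,y\in K$, where $\rho$ is the weighted quasi-distance attached to the weights $\omega_j=\omega_j(p)$. This follows from the classical ball-box theorem for $d_\F$ by transporting it through $\delta_\lambda$ and using that $\rho$ is $1$-homogeneous for $\delta_\lambda$; the same two-sided bound holds for $\hat d$. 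I would extract two consequences: (a) the family $\{d_{\F^\lambda}\}_\lambda\cup\{\hat d\}$ is uniformly equicontinuous on $K\times K$; (b) once the $\limsup$ in Step 3 is under control, any $\F^\lambda$-curve joining two fixed points whose length is within $\lambda^{-1}$ of $d_{\F^\lambda}$ has length bounded uniformly in $\lambda$, hence---solving $\dot\gamma=\sum_i u_i^\lambda X_i^\lambda(\gamma)$ with minimal control $\lvert u^\lambda\rvert\equiv1$ and the $X_i^\lambda$ equibounded on $K$---stays in a fixed compact set, and after reparametrization on $[0,1]$ at constant speed is uniformly Lipschitz with $\{u^\lambda\}$ bounded in $L^2([0,1];\R^m)$.

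\textbf{Step 3: the two inequalities.} For the upper bound, fix $x,y$ and $\varepsilon>0$, pick an $\hat\F$-admissible curve $\hat\gamma\colon[0,1]\to\R^n$ from $x$ to $y$ with minimal control $\hat u$ and $\Le_{\hat\F}(\hat\gamma)\le\hat d(x,y)+\varepsilon$, and let $\gamma^\lambda$ solve $\dot\gamma^\lambda=\sum_i\hat u_i X_i^\lambda(\gamma^\lambda)$, $\gamma^\lambda(0)=x$. Since $X_i^\lambda\to\hat X_i$ locally uniformly and $\hat u$ is fixed, $\gamma^\lambda\to\hat\gamma$ uniformly, so $\gamma^\lambda(1)\to y$ and $d_{\F^\lambda}(\gamma^\lambda(1),y)\to0$ by the upper ball-box bound; therefore $d_{\F^\lambda}(x,y)\le\int_0^1\lvert\hat u\rvert\,\di t+d_{\F^\lambda}(\gamma^\lambda(1),y)\le\hat d(x,y)+\varepsilon+o(1)$, so $\limsup_{\lambda\to\infty}d_{\F^\lambda}(x,y)\le\hat d(x,y)$. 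For the lower bound, take near-minimizing $\F^\lambda$-curves $\gamma^\lambda$ from $x$ to $y$ with minimal control $u^\lambda$, parametrized on $[0,1]$ at constant speed; by (b), up to a subsequence $\gamma^\lambda\to\gamma$ uniformly with $\gamma(0)=x$, $\gamma(1)=y$, and $u^\lambda\rightharpoonup u$ weakly in $L^2$. Passing to the limit in $\dot\gamma^\lambda=\sum_i u_i^\lambda X_i^\lambda(\gamma^\lambda)$, using that $X_i^\lambda\to\hat X_i$ uniformly, shows $\dot\gamma=\sum_i u_i\hat X_i(\gamma)$ a.e., i.e. $\gamma$ is $\hat\F$-admissible; by weak lower semicontinuity of $v\mapsto\int_0^1\lvert v\rvert$, $\hat d(x,y)\le\Le_{\hat\F}(\gamma)\le\int_0^1\lvert u\rvert\,\di t\le\liminf_{\lambda\to\infty}\int_0^1\lvert u^\lambda\rvert\,\di t=\liminf_{\lambda\to\infty}d_{\F^\lambda}(x,y)$. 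Combining the two gives $d_{\F^\lambda}(x,y)\to\hat d(x,y)$ for every $(x,y)$.

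\textbf{Step 4: conclusion, and the main obstacle.} Pointwise convergence of the metrics together with the uniform equicontinuity on compacts from Step 2(a) upgrades to locally uniform convergence by a routine Arzelà--Ascoli argument. I expect the main obstacle to be precisely the uniform ball-box estimate of Step 2: one must control the bracket-generating behaviour of $\F^\lambda$ uniformly in $\lambda$, equivalently produce, near every point of $K$, systems of ``approximate exponential'' coordinates whose comparison constants are stable under the rescaling $\delta_\lambda$---this is the content of Bellaïche's privileged-coordinate construction and the ensuing uniform ball-box theorem \cite{Bellaiche_96}. Since we rescale only at the fixed point $p$, the (possibly rank-varying) weights $\omega_j(p)$ are fixed, so the rank-varying nature of $\D$ introduces no additional difficulty here.
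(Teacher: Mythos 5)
Your argument is correct and is essentially the standard proof of this statement; the paper itself does not prove it but cites \cite[Theorem 10.65]{ABB}, whose proof follows the same scheme (rescaled frames $X_i^{\lambda}\to\hat X_i$ in $C^{\infty}_{\mathrm{loc}}$ via \cite[Lemma 10.58]{ABB}, a uniform reachability estimate, and the two semicontinuity inequalities). One caveat: the two-sided uniform ball--box estimate $c\,\rho\le d_{\F^{\lambda}}\le C\,\rho$ with $\rho$ built from the weights $\omega_j(p)$ is false in the rank-varying case for pairs of points both away from the blow-up center (already in the Grushin plane, $d((x_0,y_0),(x_0,y_0+\epsilon))\approx\epsilon/|x_0|\ll\sqrt{\epsilon}=\rho$ for $x_0\neq0$, so the lower bound fails); only the upper bound, i.e.\ the uniform Chow--Rashevskii/reachability half, holds near $p$ with the weights at $p$. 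Fortunately every use you make of the ball--box estimate (equicontinuity, the a priori bounds on near-minimizers, and $d_{\F^{\lambda}}(\gamma^{\lambda}(1),y)\to0$) only invokes that upper half, so the proof goes through once the claim in Step 2 is weakened accordingly.
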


\begin{rem}\label{rem: subR_tan_cone}
 As a result of Theorem \ref{thm: uniform convergence of distances}, $(M,\lambda\dist_{\F},p)$ converges in the pointed Gromov--Hausdorff topology to $(\R^n,\hat{\dist},0)$ as $\lambda\to\infty$ and the functions $\delta_{\lambda^{-1}}$ may act as our $\epsilon$-Gromov-Hausdorff approximations. Thus $(\mathbb{R}^n,\hat{d},0)$ is the (unique) tangent cone of $(M,d_{\F})$ at $p$.
\end{rem}

\subsection{Geodesics in sub-Riemannian manifolds and their blow-up}\label{sec:prelim_geodesic_in_sub_Riem_mflds}

There are two types of geodesics (i.e.\,\,constant-speed length-minimizing curves) in a sub-Riemannian manifold $(M,\F)$, namely \emph{normal} and \emph{abnormal geodesics}. A geodesic $\gamma$ is normal if it takes the form $\gamma=\pi(\lambda)$, where $\pi\colon T^*M\to M$ is the cotangent bundle projection, and $\lambda$ is a \emph{normal Pontryagin extremal}, i.e. an integral curve of the \emph{sub-Riemannian Hamiltonian vector field $\vec{H}\in\mathfrak{X}(T^*M)$} (see \cite[Definition 4.21]{ABB}). Normal geodesics are always smooth \cite[Theorem 4.25]{ABB}. A geodesic is called abnormal if it is not normal; contrary to normal geodesics, it may not be smooth.

Our first observation is that the distribution $\D_p$ at a point $p\in M$ is spanned by the velocities of normal geodesics through $p$.

\begin{prop}\label{prop: normal geodesics}
    For every $p\in M$ and $v\in \D_p$, there exists a normal geodesic $\gamma\colon(-\epsilon,\epsilon)\to M$ such that $\gamma(0)=p$ and $\dot{\gamma}(0)=v$.
\end{prop}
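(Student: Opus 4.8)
The plan is to produce the desired normal geodesic by solving the Hamiltonian system with an appropriate initial covector, chosen so that the projection of the resulting Pontryagin extremal has initial velocity exactly $v$. First I would recall the structure of the sub-Riemannian Hamiltonian $H\colon T^*M\to\R$, namely $H(\lambda)=\tfrac12\sum_{i=1}^m\langle\lambda, X_i(\pi(\lambda))\rangle^2$, and the fact that its flow, when a normal extremal $\lambda(t)$ with $\lambda(0)=\lambda_0\in T_p^*M$ is projected down, gives an admissible curve $\gamma=\pi(\lambda)$ whose velocity is $\dot\gamma(t)=\sum_{i=1}^m\langle\lambda(t),X_i(\gamma(t))\rangle X_i(\gamma(t))$; in particular $\dot\gamma(0)=\sum_{i=1}^m\langle\lambda_0,X_i(p)\rangle X_i(p)$. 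Since $v\in\D_p$, write $v=\sum_i u_i^* X_i(p)$ using the minimal control $u^*=(u_1^*,\dots,u_m^*)$. The task then reduces to finding a covector $\lambda_0\in T_p^*M$ with $\langle\lambda_0, X_i(p)\rangle = u_i^*$ for all $i$ — or at least a covector whose ``horizontal part'' realizes $v$.

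The key step is therefore the existence of such a $\lambda_0$. I would argue as follows: the linear map $\mathrm{ev}_p\colon T_p^*M\to\R^m$, $\lambda_0\mapsto(\langle\lambda_0,X_1(p)\rangle,\dots,\langle\lambda_0,X_m(p)\rangle)$, has image equal to $(\ker \mathrm{span}\{X_i(p)\})^{\perp}$ under the obvious duality — more precisely, its image is exactly the annihilator of the relations among the $X_i(p)$, which by construction of the sub-Riemannian scalar product is the set of controls realizing vectors in $\D_p$ via the pseudo-inverse. Concretely, since $\D_p=\mathrm{span}\{X_1(p),\dots,X_m(p)\}$ and the minimal control $u^*$ of $v$ is by definition the element of minimal $\R^m$-norm in the affine subspace $\{u: \sum u_i X_i(p)=v\}$, it lies in the orthogonal complement of $\ker(u\mapsto\sum u_i X_i(p))$, hence in the row space of the matrix whose columns are the $X_i(p)$ expressed in a basis of $T_pM$; this row space is precisely $\mathrm{im}(\mathrm{ev}_p)$. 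Thus there exists $\lambda_0\in T_p^*M$ with $\mathrm{ev}_p(\lambda_0)=u^*$, and then the normal extremal $\lambda(t)=e^{t\vec H}(\lambda_0)$, defined on some interval $(-\epsilon,\epsilon)$ by smoothness of $\vec H$ and the ODE existence theorem, projects to a normal geodesic $\gamma=\pi\circ\lambda$ with $\gamma(0)=p$ and $\dot\gamma(0)=\sum_i u_i^* X_i(p)=v$.

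I should also remark on two routine points to close the argument cleanly. First, one must check that $\gamma$ is genuinely a geodesic, i.e.\ length-minimizing on a small enough interval: this is standard — normal extremal trajectories are locally minimizing (see \cite[Theorem 4.61]{ABB} or the analogous statement), possibly after shrinking $\epsilon$. Second, the constant-speed parametrization: $H$ is constant along $\lambda(t)$, so $|\dot\gamma(t)|_{\gamma(t)}=\sqrt{2H(\lambda_0)}$ is constant, matching the definition of geodesic used in the paper; if $v=0$ one takes the constant curve. Finally, since we only need $\dot\gamma(0)=v$ and not a specific speed normalization beyond constancy, no rescaling subtlety arises.

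The main obstacle is the algebraic lemma that $u^*$ lies in the image of $\mathrm{ev}_p$ — equivalently, that every vector $v\in\D_p$ is the ``horizontal gradient'' of some covector. This is really just the statement that, for the quotient Euclidean structure on $\D_p=\R^m/\ker$, the identification of $\D_p$ with a subspace of $(\R^m)^*$ via the minimal-control map is consistent with pulling back along $\mathrm{ev}_p$; I would phrase it via the commuting diagram $T_p^*M\xrightarrow{\mathrm{ev}_p}\R^m$ and the transpose of the evaluation $\R^m\to T_pM$, $u\mapsto\sum u_iX_i(p)$, noting that $\mathrm{im}(\mathrm{ev}_p)=\mathrm{im}(\text{transpose})=(\ker)^{\perp}$ and $u^*\in(\ker)^{\perp}$ by minimality. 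Everything else is an application of the well-posedness of the Hamiltonian ODE and the standard local minimality of short normal extremals. I do not expect the rank-varying nature of $\D$ to cause trouble here, since all statements are pointwise at $p$.
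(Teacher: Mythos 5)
Your proposal is correct and follows essentially the same route as the paper: choose a covector $\lambda_0\in T_p^*M$ whose horizontal components realize the minimal control of $v$, flow it by $\vec H$, and project; the paper simply asserts the existence of such a $\lambda_0$ and cites \cite[Theorems 4.25, 4.65 and eq.\ (4.39)]{ABB} for the extremal/geodesic/velocity facts, while you supply the (correct) linear-algebra justification that $u^*\in(\ker F)^{\perp}=\mathrm{im}(\mathrm{ev}_p)$. No gaps.
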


\begin{proof}
    We fix $\lambda_0\in T^*_pM$ such that $v=\sum_{i=1}^m\braket{\lambda_0|X_i(p)}X_i(p)$ and denote $\lambda(t)=\exp(t\vec{H})(\lambda_0)$. Thanks to \cite[Theorem 4.25]{ABB}, $\lambda$ is a normal Pontryagin extremal. Therefore, due to \cite[Theorem 4.65]{ABB}, there exists $\epsilon>0$ such that $\gamma(t)\coloneqq\pi(\lambda(t))$ ($t\in(-\epsilon,\epsilon)$) is a normal geodesic on $M$. As a result of \cite[equation (4.39)]{ABB}, we have $\dot{\gamma}(0)=\sum_{i=1}^m\braket{\lambda_0|X_i(p)}X_i(p)=v$, which concludes the proof.
\end{proof}

Below, we fix a point $p\in M$, and a system of privileged coordinates $\phi$ at $p$. Given a curve $\gamma\colon I\to M$ such that $0\in I$, $\gamma(0)=p$, and $\mathrm{Im}(\gamma)\subset \mathrm{Dom}(\phi)$, we introduce the \emph{rescaled curve}:
\begin{equation}\label{eq:rescaled curve}
    \gamma^{\lambda}\colon t\in\lambda I\mapsto \delta_{\lambda}(\gamma(\lambda^{-1}t))\in \R^n,
\end{equation}
where $\lambda>0$ and $\gamma$ is identified with its image by $\phi$. We call \emph{a blow-up of $\gamma$} any curve arising as the limit of $\gamma^{\lambda_k}$ as $k$ goes to $\infty$ in the topology of uniform convergence on compact subsets of $I_{\infty}=\cup_{\lambda>0}\lambda I$, for some sequence $\lambda_k\to\infty$ as $k$ goes to $\infty$. If $\gamma$ is a geodesic, then, as a result of Theorem \ref{thm: uniform convergence of distances}, any blow-up is a geodesic in $(\R^n,\hat{d})$. More precisely, when $\gamma$ is a normal geodesic, the blow-up is unique and can be described explicitly in terms of $\dot{\gamma}(0)$. For completeness, we include a proof in Appendix \ref{sec:appendix}.

\begin{prop}\label{prop: normal geodesic blow-up}
 If $\gamma\colon(-\epsilon,\epsilon)\to M$ is a unit-speed normal geodesic such that $\gamma(0)=p$ and $\dot{\gamma}(0)=v$, then $\gamma^{\lambda}$ converges locally uniformly on $\R$ to a line $\hat{\gamma}$ through $0$ in $(\R^n,\hat{\dist})$ as $\lambda$ goes to $\infty$. Moreover, for every $t\in\R$, we have $\hat{\gamma}(t)=e^{t\hat{v}}(0)$, where $\hat{v}=\sum_{i=1}^mv^*_i\hat{X}_i\in\mathfrak{X}(\R^n)$ and $v^*=(v_1^*,\cdots,v_m^*)\in\R^m$ is the minimal control of $v$.
\end{prop}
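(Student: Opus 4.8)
The plan is to carry out the blow-up in two stages: a soft compactness step that extracts subsequential limits of $\gamma^\lambda$ and shows they are geodesic lines of $(\R^n,\hat{d})$, and a quantitative step that identifies the limit as $e^{t\hat{v}}(0)$ by passing to the limit in the differential equation that $\gamma^\lambda$ inherits from the normal geodesic $\gamma$.

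\emph{Step 1 (compactness).} Since $\gamma$ is a unit-speed geodesic with $\gamma(0)=p=0$, for each $T>0$ and all large $\lambda$ the curve $\gamma^\lambda$ is defined on $[-T,T]$ and is a unit-speed geodesic of $(\R^n,\dist_\lambda)$ through $0$, because $\dist_\lambda(\gamma^\lambda(s),\gamma^\lambda(t))=\lambda\dist_\F(\gamma(s/\lambda),\gamma(t/\lambda))=|s-t|$. The ball--box theorem gives $\lVert\gamma(s)\rVert\le C|s|$ for small $|s|$, and since $\lVert\delta_\lambda x\rVert=\lambda\lVert x\rVert$ we get $\lVert\gamma^\lambda(t)\rVert=\lambda\lVert\gamma(t/\lambda)\rVert\le C|t|\le CT$, so all the curves $\gamma^\lambda|_{[-T,T]}$ lie in the fixed compact set $\{\lVert y\rVert\le CT\}$, independently of $\lambda$. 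Applying Theorem \ref{thm: uniform convergence of distances} on this set yields $\hat{d}(\gamma^\lambda(s),\gamma^\lambda(t))\le|s-t|+o_\lambda(1)$ uniformly in $s,t\in[-T,T]$, so by Arzelà--Ascoli every sequence $\lambda_k\to\infty$ has a subsequence along which $\gamma^{\lambda_k}$ converges locally uniformly on $\R$ to some $\hat{\gamma}$; letting $k\to\infty$ in the displayed identity shows $\hat{\gamma}$ is a unit-speed geodesic line of $(\R^n,\hat{d})$ with $\hat{\gamma}(0)=0$.

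\emph{Step 2 (identification).} Being normal, $\gamma$ is smooth, and, writing $\lambda(t)=\exp(t\vec{H})(\lambda_0)$ for its extremal lift, it solves $\dot{\gamma}(t)=\sum_{i=1}^m u_i(t)X_i(\gamma(t))$ with the smooth control $u_i(t)=\braket{\lambda(t)|X_i(\gamma(t))}$ (\cite[equation (4.39)]{ABB}). At $t=0$, the implication $\sum_i w_iX_i(p)=0\Rightarrow\sum_i w_iu_i(0)=\braket{\lambda_0|\sum_i w_iX_i(p)}=0$ shows that $u(0)$ is orthogonal to $\ker(u\mapsto\sum_i u_iX_i(p))$, while $\sum_i u_i(0)X_i(p)=\dot{\gamma}(0)=v$; hence $u(0)=v^*$, the minimal control of $v$. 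Differentiating $\gamma^\lambda(t)=\delta_\lambda(\gamma(t/\lambda))$, substituting $X_i=\hat{X}_i+R_i$ from \eqref{eq:nilp approx}, and using the homogeneity $(\delta_\lambda)_*\hat{X}_i=\lambda\hat{X}_i$ together with $R_i=\sum_j r_{ij}\partial_{x_j}$, $r_{ij}(x)=O(\lVert x\rVert^{\omega_j})$ (see \cite[Theorem 5.19]{Bellaiche_96}), one obtains
\[
\dot{\gamma}^\lambda(t)=\sum_{i=1}^m u_i(t/\lambda)\,\hat{X}_i(\gamma^\lambda(t))+\varepsilon_\lambda(t),
\]
with $\varepsilon_\lambda(t)=\sum_{i,j}u_i(t/\lambda)\,r_{ij}(\gamma(t/\lambda))\,\lambda^{\omega_j-1}\partial_{x_j}$. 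On $[-T,T]$ one has $|r_{ij}(\gamma(t/\lambda))|=O((|t|/\lambda)^{\omega_j})$ by Step 1, so each coefficient of $\varepsilon_\lambda$ is $O(\lambda^{-1})$ and $\varepsilon_\lambda\to 0$ in $C^0([-T,T])$; meanwhile $u_i(t/\lambda)\to u_i(0)=v_i^*$ uniformly by continuity of $u$ at $0$, and $\gamma^\lambda$ stays in a fixed compact set on which the $\hat{X}_i$ are Lipschitz. Passing to the limit in the integral form of the equation along a convergent subsequence forces $\hat{\gamma}$ to solve $\dot{\hat{\gamma}}=\sum_i v_i^*\hat{X}_i(\hat{\gamma})$, $\hat{\gamma}(0)=0$, on every $[-T,T]$; hence, by uniqueness, $\hat{\gamma}(t)=e^{t\hat{v}}(0)$ with $\hat{v}=\sum_i v_i^*\hat{X}_i$. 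Since this limit does not depend on the subsequence and the family $\{\gamma^\lambda\}$ is precompact, $\gamma^\lambda\to\hat{\gamma}$ locally uniformly on all of $\R$ (and, incidentally, $t\mapsto e^{t\hat{v}}(0)$ is thereby globally defined and is a geodesic line of $(\R^n,\hat{d})$ through $0$).

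\emph{Main obstacle.} The delicate point is the scale bookkeeping in Step 2: one must check that $\varepsilon_\lambda$ is genuinely negligible in the blow-up. This is a cancellation of three effects — the factor $\lambda^{-1}$ from reparametrizing time by $t\mapsto t/\lambda$, the factor $\lambda^{\omega_j}$ produced by $(\delta_\lambda)_*$ acting on $\partial_{x_j}$, and the decay $r_{ij}(\gamma(t/\lambda))=O((|t|/\lambda)^{\omega_j})$ of the lower-order coefficients along $\gamma$ (itself resting on the ball--box equivalence $\dist_\F\asymp\lVert\cdot\rVert$ near $p$) — whose net effect is a gain of $\lambda^{-1}$. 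Note that mere smoothness of $R_i$ with $R_i(0)=0$ is not enough once some weight $\omega_j\ge 2$, so the sharp weighted-order vanishing of the $r_{ij}$ is essential. The remaining ingredients — the compactness of Step 1 and the continuous dependence of ODE solutions on their data — are routine.
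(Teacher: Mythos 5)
Your proof is correct and follows essentially the same route as the paper: pass to the limit in the control ODE satisfied by $\gamma^{\lambda}$, using the homogeneity of $\hat{X}_i$ and the weighted-order vanishing of the remainders $R_i$ (which is exactly the content of the convergence $\lambda^{-1}{\delta_\lambda}_*X_i\to\hat{X}_i$ that the paper cites from \cite[Lemma 10.58]{ABB}), then identify the limit via the initial value $u(0)=v^*$. The only packaging difference is that you replace the paper's appeal to continuous dependence of ODE solutions on parameters by an Arzel\`a--Ascoli compactness step plus uniqueness of the limit ODE, which is an equivalent standard argument.
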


A simple consequence of the previous proposition is the following corollary (see Appendix \ref{sec:appendix} for a proof), which will play an important part in the proof of Theorem \ref{mainthm:not_CD}.
\begin{cor}\label{lem: normal_line}
    If we denote $(\R^n,\hat{\mathcal{F}})$ the nilpotent approximation of $M$ at $p$ and fix a vector $\hat{v}_0\in \hat{\mathcal{D}}_0$, then there exists a line $\hat{\gamma}\colon\R\to(\R^n,\hat{\dist})$ such that $\hat{\gamma}(0)=0$, $\frac{\dist}{\dist t}\hat{\gamma}(0)=\hat v_0$, and, for every $\lambda>0$, $\hat{\gamma}^{\lambda} = \hat{\gamma}$, where the rescaled curve $\hat{\gamma}^{\lambda}$ is introduced in \eqref{eq:rescaled curve}. In particular, $\hat{\gamma}$ is invariant under blow-up.
\end{cor}

If $\gamma$ is an abnormal geodesic, then it is not clear whether there is a unique blow-up. Monti, Pigati, and Vittone proved the following result \cite{Monti18} in the case of a constant-rank distribution. Their proof extends to the rank-varying case after some modifications. We provide a proof of the rank-varying case in Appendix \ref{sec:appendix}.

\begin{thm}\label{thm:geodesic blow-up}
    If $\gamma\colon[0,T]\to M$ is a unit-speed geodesics such that $\gamma(0)=p$, then there exists a sequence $\lambda_k\to\infty$ such that $\gamma^{\lambda_k}$ converges locally uniformly on $\R_{\ge0}$ to a ray $\hat{\gamma}$ emanating from $0$ in $(\R^n,\hat{\dist})$ as $k$ goes to $\infty$. Moreover, $\hat{\gamma}$ admits a constant control; in particular, it is smooth.
\end{thm}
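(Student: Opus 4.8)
The plan is to adapt the argument of Monti--Pigati--Vittone \cite{Monti18} to rank-varying distributions, organising it as a soft compactness step that produces a blow-up geodesic ray, followed by a hard step that upgrades the control to a constant one. For a \emph{normal} geodesic $\gamma$ the conclusion is already contained in Proposition \ref{prop: normal geodesic blow-up} (with the stronger assertion that the blow-up is unique and explicit), so the real content is the possibly abnormal case. \emph{Step 1 (extracting the blow-up ray).} After restricting $\gamma$ to a small enough interval $[0,\delta]$ so that its image lies in the privileged chart at $p$, the identity $\dist_\lambda(\delta_\lambda x,\delta_\lambda y)=\lambda\,\dist_\F(x,y)$ and the fact that $\gamma$ is a unit-speed geodesic give $\dist_\lambda(\gamma^\lambda(s),\gamma^\lambda(t))=|s-t|$ for $s,t\in\lambda[0,\delta]$, where $\gamma^\lambda$ is the rescaled curve \eqref{eq:rescaled curve}; thus each $\gamma^\lambda$ is an isometric embedding into $(\R^n,\dist_\lambda)$. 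By Theorem \ref{thm: uniform convergence of distances}, $\dist_\lambda\to\hat{\dist}$ locally uniformly and $(\R^n,\hat{\dist})$ is proper, so on each bounded time interval the $\gamma^\lambda$ stay in a fixed compact set and are equi-Lipschitz; a diagonal Arzel\`a--Ascoli argument then gives $\lambda_k\to\infty$ with $\gamma^{\lambda_k}\to\hat\gamma$ locally uniformly on $\R_{\ge0}$, and passing to the limit in $\dist_{\lambda_k}(\gamma^{\lambda_k}(s),\gamma^{\lambda_k}(t))=|s-t|$ identifies $\hat\gamma$ as a unit-speed geodesic ray of $(\R^n,\hat{\dist})$ with $\hat\gamma(0)=\lim_k\delta_{\lambda_k}(0)=0$.

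\emph{Step 2 (passing to the limit in the controls).} Let $u$ be the minimal control of $\gamma$, so $|u|_{\R^m}=1$ a.e. From the nilpotent decomposition \eqref{eq:nilp approx} and the homogeneity $\delta_\lambda^*\hat X_i=\lambda^{-1}\hat X_i$ one checks that the rescaled frame $X_i^\lambda\coloneqq\lambda^{-1}(\delta_\lambda)_*X_i=\hat X_i+\lambda^{-1}(\delta_\lambda)_*R_i$ converges to $\hat X_i$ in $C^\infty_{\mathrm{loc}}(\R^n)$ as $\lambda\to\infty$, and that $\gamma^\lambda$ is horizontal for $\{X_i^\lambda\}$ with control $u^\lambda(t)\coloneqq u(t/\lambda)$, still of unit norm a.e. Along a further subsequence $u^{\lambda_k}\rightharpoonup^{*}w$ weakly-$*$ in $L^\infty_{\mathrm{loc}}(\R_{\ge0},\R^m)$; combining this with the uniform convergences $\gamma^{\lambda_k}\to\hat\gamma$ and $X_i^{\lambda_k}\to\hat X_i$ and passing to the limit in $\dot\gamma^{\lambda_k}=\sum_i u^{\lambda_k}_i X_i^{\lambda_k}(\gamma^{\lambda_k})$ shows that $\hat\gamma$ is horizontal for $\hat\F$ with control $w$. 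Since $\hat\gamma$ has unit speed, a projection argument onto the affine control fibre over $\dot{\hat\gamma}(t)$ then forces $w(t)$ to equal the unit-norm minimal control of $\hat\gamma$ for a.e.\ $t$.

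\emph{Step 3 (upgrading to a constant control).} It remains to choose the rescaling so that $w$ is constant, and this is where the machinery of \cite{Monti18,Monti17} is needed. The structural input is that the dilations $\{\delta_\mu\}$ act on the set $\mathcal R$ of unit-speed geodesic rays of $(\R^n,\hat{\dist})$ emanating from $0$ by the continuous, single-valued (using $\delta_\mu$-homogeneity of $\hat{\dist}$) flow $S_\mu(\alpha)=\delta_\mu\circ\alpha(\mu^{-1}\cdot)$, and that every blow-up of a geodesic of $(\R^n,\hat{\dist})$ again lies in $\mathcal R$. Iterating the blow-up construction inside the compact set $\mathcal R$, one chooses the rescaling sequence so that the limiting ray $\hat\gamma$ is \emph{self-similar}, $S_\mu\hat\gamma=\hat\gamma$ for all $\mu>0$. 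A direct computation shows that the control of $S_\mu\hat\gamma$ is $t\mapsto w(t/\mu)$, so self-similarity together with uniqueness of the minimal control forces $w(t/\mu)=w(t)$ for a.e.\ $t$ and all $\mu>0$, whence $w\equiv\bar u$ for a constant $\bar u\in\R^m$; then $\hat\gamma$ solves $\dot{\hat\gamma}=\sum_i\bar u_i\hat X_i(\hat\gamma)$ with $\hat\gamma(0)=0$, so $\hat\gamma(t)=e^{t\hat v}(0)$ with $\hat v=\sum_i\bar u_i\hat X_i$, which is smooth.

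I expect the genuine difficulty to be concentrated in Step 3: producing a self-similar blow-up, and, above all, re-establishing in the rank-varying setting the quantitative facts imported from \cite{Monti18} --- the convergence of the rescaled frames and of the rescaled geodesics, the identification of blow-ups of geodesics with geodesics of the nilpotent approximation, and the compactness/iteration scheme --- without the constant-rank hypothesis, carrying the flag dimensions $n_i(p)$ and the weights $\omega_j(p)$ through a tangent cone which, unlike the equiregular case, need not be a Carnot group. This is the purpose of Appendix \ref{sec:appendix}.
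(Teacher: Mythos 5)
Your Steps 1 and 2 are sound and coincide with the paper's opening move (Arzel\`a--Ascoli plus Theorem \ref{thm: uniform convergence of distances} to extract a blow-up ray $\hat\gamma$ from $0$). The gap is in Step 3. The sentence ``iterating the blow-up construction inside the compact set $\mathcal R$, one chooses the rescaling sequence so that the limiting ray $\hat\gamma$ is self-similar'' is precisely the hard content of the theorem and is not justified. It is true that the set of blow-ups of $\gamma$ at $0$ is compact and invariant under the flow $S_\mu$, but a continuous flow on a compact set need not have a fixed point, so soft compactness does not produce a self-similar blow-up; if it did, the theorem of Monti--Pigati--Vittone would be trivial. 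Their proof of Theorem \ref{thm:Carnot blow-up} is a quantitative induction on the step, driving the higher-layer components of the control to zero, and it uses the Carnot \emph{group} structure (left translations, stratification) in an essential way --- structure that the rank-varying nilpotent approximation $(\R^n,\hat\F)$ does not have, since it is in general only a homogeneous space and not a group. So as written, Step 3 assumes the conclusion.

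The paper avoids reworking the MPV induction altogether. It lifts the nilpotent approximation to the Carnot group $G_p=\exp(\mathrm{Lie}_{\R}(\hat X_1,\dots,\hat X_m))$ acting on $\R^n$ via $\pi(g)=g^{-1}(0)$, horizontally lifts the blow-up ray $\hat\gamma$ to a curve $\overline\gamma$ in $G_p$ which is again length-minimizing (Proposition \ref{prop: horizontal lift}), applies Theorem \ref{thm:Carnot blow-up} in $G_p$ to obtain a further blow-up $\overline\gamma_\infty$ with constant control, and pushes it back down: $\pi$ intertwines the dilations of $G_p$ and of $\R^n$ (Lemma \ref{lem: push forward vector in Carnot lift}(2)) and sends constant controls to constant controls, so $\hat\gamma_\infty=\pi(\overline\gamma_\infty)$ is the desired ray; a final diagonal argument (as in \cite{Monti17}) merges the two blow-up sequences $\eta_k$ and $\xi_k$ into a single $\lambda_k$. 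To repair your proof you would need either to supply this lifting mechanism or to carry out the full MPV induction in the rank-varying setting; deferring it to ``Appendix \ref{sec:appendix}'' without the lifting idea leaves the central step unproved.
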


\subsection{CD and RCD conditions}As a result of Gromov's precompactness theorem \cite[Theorem 5.3]{Gromov_98}, sequences of manifolds with dimension bounded above and Ricci curvature bounded below admit converging subsequences in the pointed Gromov--Hausdorff topology. Limits of such sequences, namely \emph{Ricci limit space}, were studied extensively since the seminal papers by Cheeger and Colding \cite{CC96,CCI,CCII,CCIII}. As Ricci limit spaces may be singular, the need for a generalized definition of Ricci curvature lower bounds became apparent. Inspired notably by \cite{Cordero-Erausquin_01}, Lott--Villani \cite{Lott_Villani_2009} and Sturm \cite{Sturm_I_06,Sturm_II_06} introduced the $\CDe(K,N)$ condition to characterize potentially non-smooth metric measure spaces with $\dim\le N$ and $\Ric\ge K$. 

A metric measure space (m.m.s.\,\,for short) consists of a triple $(X,d,\meas)$, where $(X,\dist)$ is a complete and separable metric space and $\meas$ is a nonnegative Radon measure on its Borel $\sigma$-algebra. We denote $\mathrm{Geo}(X,d)$ the set of constant-speed length-minimizing geodesics in $(X,d)$ parametrized on $[0,1]$. If $t\in[0,1]$, we denote $e_t\colon \gamma\in\mathrm{Geo}(X,d)\mapsto \gamma(t)\in X$ the time-$t$ evaluation map. Given $K\in\R$, $N\in(1,\infty)$, and $(t,\theta)\in[0,1]\times\R_{\geq0}$, we define:
    \begin{equation*}\label{eq: dist}
            \mathfrak{s}_{\kappa}(\theta)\coloneqq
	       \begin{cases*}
                \frac{\sin(\sqrt{\kappa}\theta)}{\sqrt{\kappa}}, & if $\kappa>0$\\
		  \theta, & if $\kappa=0$\\
		      \frac{\sinh(\sqrt{-\kappa}\theta)}{\sqrt{-       \kappa}}, & if $\kappa<0$
	\end{cases*} \text{ and } \sigma_{K,N}^{(t)}(\theta)\coloneqq
        \begin{cases*}
            \infty, & if $K\theta^2\geq N\pi^2$\\
            \frac{\mathfrak{s}_{K/N}(t\theta)}{\mathfrak{s}_{K/N}(\theta)}, & if $K\theta^2<N\pi^2$ and $K\theta^2\neq0$\\
		t, & if $K\theta^2=0$
		\end{cases*}.
    \end{equation*}
If $N>1$, we denote $\tau_{K,N}^{(t)}(\theta)\coloneqq t^{1/N}\{\sigma_{K,N-1}^{(t)}(\theta)\}^{1-1/N}$. 
We refer the readers to \cite{Villani_09} for an introduction to optimal transport, including Wasserstein geodesics and entropy functionals.

\begin{defn}
    A m.m.s.\,$(X,d,\meas)$ satisfies the \emph{$\CDe(K,N)$ condition} ($K\in\R$ and $N\in (1,\infty)$) if, given any pair of Borel probability measures $\mu_i=\rho_i\meas\ll \meas$ ($i=0,1$) with finite second moment, there exists a Borel probability measure $\eta$ on $\mathrm{Geo}(X,d)$ such that $\mu_t\coloneqq {e_t}_{\#}\eta$ ($0\le t\le1$) is an $\mathcal{L}^2$ Wassertein geodesic from $\mu_0$ to $\mu_1$ and, for every $N'\ge N$, we have the following property:
    \begin{equation*}
        \forall t\in[0,1],\ \mathcal{S}_{N'}(\mu_t\mid\meas)\le -\int_{X\times X} \Big[\tau_{K,N'}^{(1-t)}(d(x,y))\rho_0(x)^{-\frac{1}{N'}}+\tau_{K,N'}^{(t)}(d(x,y))\rho_1(x)^{-\frac{1}{N'}}\Big]\di \pi(x,y),
    \end{equation*}
    where $\mathcal{S}_{N'}(\cdot\mid\meas)$ denotes the R\'{e}nyi entropy with parameter $N'$ associated with $\meas$ and $\pi\coloneqq (e_0,e_1)_{\#}\eta$.
\end{defn}

\begin{rem}
    {If $(X,d,\meas)$ satisfies the $\CDe(K,N)$ condition, then $(\mathrm{Spt}(\meas),d)$ is a geodesic metric space (see for example \cite[Remark 4.18 (ii)]{Sturm_I_06}). In what follows, all measures have full support, and all metric spaces considered are geodesic.}
\end{rem}

Although Ricci limit spaces satisfy the $\CDe(K,N)$ condition (for adequate $K\in\R$ and $N\in(1,\infty)$), so do Finsler manifolds under an appropriate lower curvature bound (see \cite{Ohta_09}). However, Finsler manifolds arise as Ricci limit spaces only when they are Riemannian. In \cite{Ambrosio-Gigli-Savare_14}, Ambrosio, Gigli, and Savar\'e introduced the notion of $\RCD$ spaces, ruling out non-Riemannian Finsler examples.

\begin{defn}
    A m.m.s. $(X,d,\meas)$ satisfies the $\RCD(K,N)$ condition ($K\in\R$ and $N\in(1,\infty)$) if it satisfies the $\CDe(K,N)$ condition and is \emph{infinitesimally Hilbertian}, i.e. the Sobolev space $H^{1,2}(X,d,\meas)$ is a Hilbert space.
\end{defn}

Remarkably, Gigli proved in \cite{Gigli13,Gigli14} that $\RCD(0,N)$ spaces satisfy a splitting theorem, generalizing the Cheeger--Gromoll splitting theorem for Riemannian manifolds \cite{CG_split} and the Cheeger--Colding splitting theorem \cite{CC96} for Ricci limit spaces.  

\begin{thm}\label{thm: RCD splitting}
   Let $(X,d,\meas)$ be an $\RCD(0, N)$ space, where $N\in (1,\infty)$, such that $\meas$ has full support. If $(X,d)$ contains a line, then there exists a metric measure space $(X', d',\meas')$ such that $(X,d,\meas)$ is isomorphic to $(X', d',\meas')\otimes (\R,d_E,\mathcal{L}^1)$, where:
   \begin{itemize}
       \item $(X',d',\meas')$ is an $\RCD(0,N-1)$ space when $N\ge 2$,
       \item $(X',d',\meas')$ is a point when $N<2$,
   \end{itemize}
    and $\R$ is equipped with Euclidean distance $d_E$ and Lebesgue measure $\mathcal{L}^1$.
\end{thm}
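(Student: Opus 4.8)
The plan is to follow the scheme of the Cheeger--Colding splitting theorem \cite{CC96}, as transported to the synthetic setting by Gigli \cite{Gigli13,Gigli14}; since this is a known result I only outline the architecture. Fix a unit-speed line $\gamma\colon\R\to X$ with $\gamma(0)$ a base point, and introduce the two Busemann functions
\[
b^{\pm}(x)=\lim_{t\to+\infty}\bigl(t-d(x,\gamma(\pm t))\bigr),
\]
which exist and are $1$-Lipschitz by monotonicity in $t$, satisfy $b^++b^-\le 0$ by the triangle inequality, and vanish along $\gamma$. The first key point is the harmonicity of $b^+$: the Laplacian comparison estimate available on $\RCD(0,N)$ spaces (namely $\Delta d(\cdot,q)\le (N-1)/d(\cdot,q)$ in the distributional sense) passes to the Busemann limit to give $\Delta b^{\pm}\ge 0$, whence $\Delta(b^++b^-)\ge 0$; since $b^++b^-$ is nonpositive, subharmonic, and vanishes somewhere, the strong maximum principle for $\RCD$ spaces forces $b^++b^-\equiv 0$. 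Therefore $b:=b^+=-b^-$ is harmonic with $|\nabla b|\le 1$.

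The second, and technically central, step is to upgrade this to $|\nabla b|\equiv 1$ $\meas$-a.e.\ and $\Hess b\equiv 0$. Here one invokes the self-improved (dimensional) Bochner inequality on $\RCD(0,N)$ spaces, which for a harmonic function reads $\tfrac12\Delta|\nabla b|^2\ge \|\Hess b\|_{\mathrm{HS}}^2$ with the Hessian operator as constructed in \cite{Gigli14}; in particular $|\nabla b|^2$ is subharmonic. Combining this with $|\nabla b|\le 1$, with the equality $|\nabla b|=1$ along $\gamma$, and with the linear growth of $b$ (testing $\tfrac12\Delta|\nabla b|^2\ge\|\Hess b\|_{\mathrm{HS}}^2$ against good cutoff functions, whose existence on $\RCD(K,N)$ spaces is well known) one obtains $|\nabla b|\equiv 1$, and feeding this back into Bochner forces $\Hess b\equiv 0$.

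With a Busemann function that is harmonic, of unit gradient, and of vanishing Hessian in hand, the splitting is produced as follows: the gradient flow $(\mathrm{Fl}_s)_{s\in\R}$ of $\nabla b$ is well defined, is a one-parameter group of measure-preserving isometries of $(X,d)$, and satisfies $b\circ\mathrm{Fl}_s=b+s$. Setting $X':=b^{-1}(0)$ equipped with the restricted distance $d'$ and the measure $\meas'$ obtained by disintegrating $\meas$ along the level sets of $b$, the map $\Phi\colon X\to X'\times\R$, $\Phi(x)=(\mathrm{Fl}_{-b(x)}(x),\,b(x))$, with inverse $(x',s)\mapsto\mathrm{Fl}_s(x')$, is an isomorphism of metric measure spaces onto $(X',d',\meas')\otimes(\R,d_E,\mathcal{L}^1)$. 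Finally, to identify the curvature of the factor one uses the de-tensorization of the $\RCD$ condition: a product with the Euclidean line is $\RCD(0,N)$ if and only if the complementary factor is $\RCD(0,N-1)$, the dimension parameter being additive under products; and if $N<2$ the Euclidean line already exhausts the admissible dimension, so $X'$ must be a single point.

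The hardest part is everything underlying the first two steps once smoothness is dropped: making sense of the Laplacian comparison estimate, the strong maximum principle, and above all the self-improved Bochner inequality with the genuine Hessian term requires the full first- and second-order calculus on $\RCD$ spaces --- test functions, the measure-valued Laplacian, the module of $L^2$ vector fields, and the definition of $\Hess$ --- together with the existence of good cutoffs; this is precisely the machinery developed by Gigli in \cite{Gigli14}. The de-tensorization statement invoked in the last step is likewise a nontrivial input rather than a routine verification.
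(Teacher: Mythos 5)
This statement is not proved in the paper: it is Gigli's splitting theorem, which the authors simply quote and cite (\cite{Gigli13,Gigli14}), so there is no internal proof to compare against. Your outline faithfully reproduces the architecture of Gigli's argument (Busemann functions, harmonicity via Laplacian comparison and the strong maximum principle, the dimensional Bochner inequality with Hessian term, the gradient flow producing measure-preserving isometries, and de-tensorization of $\RCD$), and you correctly identify where the genuine technical weight lies. One caveat on the middle step: the mechanism you propose for upgrading to $\lvert\nabla b\rvert\equiv 1$ --- invoking ``equality along $\gamma$'' (a $\meas$-null set, on which the a.e.-defined function $\lvert\nabla b\rvert$ has no canonical value) together with testing the Bochner inequality against cutoffs (whose error term scales like $R^{N-2}\to\infty$ for $N>2$) --- does not close as stated; the standard route is instead to produce, for a.e.\ $x$, an asymptotic unit-speed ray along which $b$ grows linearly, giving $\mathrm{lip}\,b\ge 1$ a.e.\ and hence $\lvert\nabla b\rvert=1$ by the identification of the minimal weak upper gradient with the local Lipschitz constant, after which Bochner yields $\Hess b=0$ directly.
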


\begin{rem}
    In the theorem above, an $\RCD(0,1)$ space should be understood as a line, ray, circle, segment, or point equipped with a constant multiple of its Hausdorff measure.
\end{rem}

We close this section with the following result stating the universal infinitesimal Hilbertianity of sub-Riemannian manifolds by Le Donne, Lu\v{c}i\'c, and Pasqualetto \cite[Theorem 1.2]{Le_Donne_23}.

\begin{thm}\label{thm: CD is RCD} 
Let $(M,\mathcal{F})$ be a sub-Riemannian manifold whose sub-Riemannian distance $d_{\mathcal{F}}$ is complete. If $\mathfrak{m}$ is a nonnegative Radon measure on $(M,d_{\mathcal{F}})$, then the m.m.s $(M,d_{\mathcal{F}},\mathfrak{m})$ is infinitesimally Hilbertian.
\end{thm}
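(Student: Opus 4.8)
The result is quoted from \cite{Le_Donne_23}; here is how I would organize its proof.

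\textbf{Reduction to the Cheeger energy.} Recall that $(M,d_{\F},\meas)$ is infinitesimally Hilbertian exactly when its Cheeger energy $\mathrm{Ch}\colon L^{2}(\meas)\to[0,\infty]$ is a quadratic form on its finiteness domain $H^{1,2}$, and that infinitesimal Hilbertianity is a local property, so one may work inside a single chart, where the sub-Riemannian structure is a H\"ormander family $\F=\{X_{1},\dots,X_{m}\}$ on an open subset of $\R^{n}$. The plan is to identify $\mathrm{Ch}$ with the manifestly quadratic ``sub-Riemannian Dirichlet form'' $\mathcal{E}(f)\coloneqq\int\sum_{i=1}^{m}(X_{i}f)^{2}\,\di\meas=\int\lvert\nabla_{\D}f\rvert^{2}\,\di\meas$ defined on $\mathcal{C}^{1}_{c}$; by the bundle description of sub-Riemannian structures (Remark \ref{rem: most general definition of sub-Riemannian}) the bilinear map $(f,g)\mapsto\sum_{i}(X_{i}f)(X_{i}g)$ depends only on the scalar product on $\D$, not on the chosen generators, so $\mathcal{E}$ is intrinsic and quadratic. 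One then needs two things: that $\mathcal{C}^{1}_{c}$ (or $\mathcal{C}^{\infty}_{c}$) is a core, i.e.\ dense in energy in $H^{1,2}(M,d_{\F},\meas)$, and that $\mathrm{Ch}(f)=\mathcal{E}(f)$ on this core; granting both, $\mathrm{Ch}$ is the closure of a quadratic form, hence quadratic.

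\textbf{The easy inequality and the pointwise slope.} For $f\in\mathcal{C}^{1}_{c}$ and any admissible curve $\gamma$ with minimal control $u^{*}$, the map $f\circ\gamma$ is absolutely continuous with $\tfrac{\di}{\di t}(f\circ\gamma)=\sum_{i}u_{i}^{*}(t)\,X_{i}f(\gamma(t))$, so Cauchy--Schwarz together with $\lvert\dot\gamma(t)\rvert_{\gamma(t)}=\lvert u^{*}(t)\rvert_{\R^{m}}$ yields $\lvert\tfrac{\di}{\di t}(f\circ\gamma)\rvert\le\lvert\nabla_{\D}f\rvert(\gamma(t))\,\lvert\dot\gamma(t)\rvert_{\gamma(t)}$. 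Since any two points are joined by admissible curves of length arbitrarily close to $d_{\F}$, the continuous function $\lvert\nabla_{\D}f\rvert$ is an upper gradient of $f$, hence a weak upper gradient, whence $\lvert Df\rvert_{*}\le\lvert\nabla_{\D}f\rvert$ $\meas$-a.e.\ and $\mathrm{Ch}(f)\le\mathcal{E}(f)$. The same estimate gives $\mathrm{lip}\,f\le\lvert\nabla_{\D}f\rvert$ pointwise; conversely, at a point $x$ with $\nabla_{\D}f(x)\neq0$, Proposition \ref{prop: normal geodesics} produces a unit-speed normal geodesic $\gamma$ with $\gamma(0)=x$ and $\dot\gamma(0)=\nabla_{\D}f(x)/\lvert\nabla_{\D}f(x)\rvert$, and since $d_{\F}(x,\gamma(t))=\lvert t\rvert$ for small $t$ while $f(\gamma(t))-f(x)=t\,\lvert\nabla_{\D}f(x)\rvert+o(t)$, we get $\mathrm{lip}\,f(x)\ge\lvert\nabla_{\D}f(x)\rvert$. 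Thus $\mathrm{lip}\,f=\lvert\nabla_{\D}f\rvert$ for $\mathcal{C}^{1}$ functions, which is the correct local model.

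\textbf{The hard inequality (main obstacle).} It remains to prove $\lvert Df\rvert_{*}\ge\lvert\nabla_{\D}f\rvert$ $\meas$-a.e., equivalently $\mathrm{Ch}(f)\ge\mathcal{E}(f)$, on the core. This is the heart of the matter and it is genuinely delicate because $\meas$ is an arbitrary Radon measure: it need not be doubling, it need not satisfy any Poincar\'e inequality, and it may be singular with respect to Lebesgue measure in the chart, so classical Cheeger theory is unavailable and the relaxation defining $\mathrm{Ch}$ can a priori destroy part of the gradient (indeed, for sufficiently degenerate $\meas$ the Cheeger energy collapses outright—but then it is trivially quadratic, consistent with the statement). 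Following \cite{Le_Donne_23}, one argues as follows: for each horizontal field $Y=\sum_{i}a_{i}X_{i}$ with $\sum_{i}a_{i}^{2}\le1$, disintegrate $\meas$ locally along the foliation by integral curves of $Y$; use this disintegration to build test plans concentrated on short integral curves of $Y$, with a stopping-time truncation guaranteeing the bounded-compression condition against $\meas$; then, reading the test-plan characterization of $\lvert Df\rvert_{*}$ along these curves, obtain $\lvert Df\rvert_{*}(x)\ge\lvert\langle\nabla_{\D}f(x),Y(x)\rangle\rvert/\lvert Y(x)\rvert$ at $\meas$-a.e.\ $x$ with $Y(x)\neq0$. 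Taking the supremum over $(a_{i})$ with $\sum a_{i}^{2}=1$ gives $\lvert Df\rvert_{*}\ge\lvert\nabla_{\D}f\rvert$; conceptually this identifies the tangent module of $(M,d_{\F},\meas)$ with a (possibly lower-rank) Hilbertian sub-bundle of $\D$ carrying the restricted sub-Riemannian norm, so that $\lvert Df\rvert_{*}$ is $\meas$-a.e.\ a Hilbertian seminorm of $\di f$ and $\mathrm{Ch}$ is quadratic. An alternative route is to bootstrap from the (known) infinitesimal Hilbertianity of weighted Riemannian manifolds by comparing $d_{\F}$ with an auxiliary Euclidean metric in the chart, but the core difficulty—controlling the relaxation without any regularity of $\meas$—is the same, and I expect precisely this disintegration-and-truncation step, together with the core property, to be where all the work lies; the rest is soft.
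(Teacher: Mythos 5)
This statement is not proved in the paper: it is quoted verbatim as \cite[Theorem 1.2]{Le_Donne_23}, so there is no internal proof to compare against, and your opening sentence correctly identifies that. Your ``easy'' half is fine: the chain rule along admissible curves plus the minimal-control characterization of $\lvert\dot\gamma\rvert$ does give that $\lvert\nabla_{\D}f\rvert$ is an upper gradient of $f\in\mathcal{C}^1$, hence $\lvert Df\rvert_*\le\lvert\nabla_{\D}f\rvert$ and $\mathrm{lip}\,f=\lvert\nabla_{\D}f\rvert$ via Proposition \ref{prop: normal geodesics}.

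The central step, however, is set up around a false statement, and your own parenthetical remark exposes the problem without resolving it. The inequality $\lvert Df\rvert_*\ge\lvert\nabla_{\D}f\rvert$ $\meas$-a.e.\ (equivalently $\mathrm{Ch}=\mathcal{E}$ on a core) simply fails for a general Radon measure: already for $\meas=\delta_0$, or for $\meas$ concentrated on a hypersurface, the relaxation kills the transversal part of the gradient. Your fallback dichotomy --- ``either the hard inequality holds or the Cheeger energy collapses and is trivially quadratic'' --- is not exhaustive: for $\meas=\mathcal{L}^n+\mathcal{H}^1\llcorner\Gamma$ with $\Gamma$ a horizontal curve, $\lvert Df\rvert_*$ agrees with $\lvert\nabla_{\D}f\rvert$ on one piece and with a rank-one seminorm on the other, so $\mathrm{Ch}$ is neither $\mathcal{E}$ nor trivial. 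Moreover the proposed mechanism for the lower bound --- disintegrating $\meas$ along integral curves of a fixed horizontal field and building test plans with bounded compression --- cannot be carried out for arbitrary $\meas$: bounded compression against $\meas$ of a plan supported on a one-parameter family of integral curves is exactly the kind of absolute-continuity statement that singular measures obstruct, which is why this route is not the one taken in \cite{Le_Donne_23}. What actually makes the theorem true is the weaker assertion you only gesture at in your last sentence: $\lvert Df\rvert_*$ is $\meas$-a.e.\ the norm of the projection of the horizontal differential onto a measure-dependent sub-bundle (in the Euclidean case this is the Alberti--Marchese decomposability bundle, and the sub-Riemannian case is reduced to the weighted Euclidean one via the local comparison $d_{\F}\ge d_g$ of Lemma \ref{lem: local Riemannian extension} together with the machinery of $L^2$-normed modules). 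Since that projection norm is pointwise Hilbertian, $\mathrm{Ch}$ is quadratic even though it does not equal $\mathcal{E}$. As written, your proof of the ``heart of the matter'' would need to be replaced by this representation theorem, which is precisely the nontrivial content of the cited work.
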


\begin{rem}
    Theorem \ref{thm: CD is RCD} implies that, if $(M,\F)$ is sub-Riemannian manifold, and $\meas$ is a nonnegative Radon measure on $(M,d_{\mathcal{F}})$ such that $(M,d_{\F},\meas)$ is a $\CDe(K,N)$ space, then $(M,d_{\mathcal{F}},\mathfrak{m})$ is also an $\RCD(K,N)$ space.
\end{rem}

\section{Sub-Riemannian manifolds are universally non-CD}\label{sec:notRCD}

We prove our main result, Theorem \ref{mainthm:not_CD}, in this section. The argument proceeds by contradiction, following the strategy outlined in the introduction. We begin with a lemma on distance estimate for smooth admissible curves in a sub-Riemannian manifold. In Section \ref{subsec: cone split}, we prove that the blow-up of normal geodesics at a point $p$ gives rise to $n_1$ many independent lines in the tangent cone, where $n_1=\dim \mathcal{D}_p$. By applying Gigli's splitting theorem, we conclude that the tangent cone at $p$ must be isometric to a product $\mathbb{R}^{n_1}\times Z$ (Proposition \ref{prop: splitting}). Then, in Section \ref{subsec: proof of A}, we prove that this metric splitting is incompatible with the sub-Riemannian structure of the tangent cone. The desired contradiction arises from a blow-up of a geodesic in the $Z$-factor and the distance estimate in Section \ref{subsec: distance below}.

\subsection{Bounding from below by a Riemannian distance}\label{subsec: distance below}

In the case of a Riemannian manifold, one has an explicit Taylor expansion of the distance between smooth curves sharing the same base point. The situation is more subtle in the case of sub-Riemannian manifolds. Nevertheless, we can obtain an analogue result in the form of a first-order lower bound for the distance between smooth admissible curves.

The first step is to bound a sub-Riemannian distance from below by a Riemannian one.

\begin{lem}\label{lem: local Riemannian extension}
If $(M^n,\F)$ is a sub-Riemannian manifold, where $\F=\{X_1,\cdots,X_m\}$, and $p\in M$, then there exists a neighborhood $U$ of $p$ and a Riemannian metric $g$ on $U$ such that the restriction of $g_p$ to $\D_p$ coincides with the sub-Riemannian dot product $\dotprod{\cdot}{\cdot}_p$. Moreover, we have $\dist_g\leq\dist_{\F}$ on any ball $B^{\F}_{\epsilon}(p)$ such that $ B^{\F}_{2\epsilon}(p)\subset U$.
\end{lem}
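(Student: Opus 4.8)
The plan is to construct the Riemannian metric $g$ on a small neighborhood $U$ of $p$ by extending the sub-Riemannian scalar product from the distribution to the whole tangent space, in such a way that every horizontal vector field in $\F$ has pointwise $g$-norm at least as large as its sub-Riemannian norm. Concretely, fix privileged coordinates $\phi=(x_1,\dots,x_n)$ at $p$, so that a neighborhood $U$ of $p$ is identified with an open subset of $\R^n$ and $\{\partial_{x_1},\dots,\partial_{x_n}\}$ is a global frame on $U$. Write each $X_i = \sum_{j=1}^n a_{ij}\partial_{x_j}$ with $a_{ij}\in C^\infty(U)$. We seek a smooth family of inner products $g$ with the properties that (i) $g_p|_{\D_p}=\dotprod{\cdot}{\cdot}_p$, and (ii) $|X_i(q)|^2_{g_q}\le 1$ wherever $|X_i(q)|_{\R^m\text{-control}}$... — more precisely, we want $g$ large enough that admissible curves are no shorter in the Riemannian metric; equivalently we want the $g$-norm on $\D_q$ to dominate the sub-Riemannian norm on $\D_q$ for $q$ near $p$.

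The key steps, in order: First, build a preliminary smooth metric $g_0$ on $U$ whose restriction to $\D_p$ at the point $p$ equals $\dotprod{\cdot}{\cdot}_p$ — this can be done by picking a $g_0$-orthonormal-type frame adapted to the flag at $p$, or simply by taking any smooth metric and correcting it at $p$, then noting the restriction to $\D_p$ only needs to match at the single point $p$. Second, observe that at $p$ we have $|X_i(p)|_{g_{0,p}} = |X_i(p)|_p$ by construction (since $X_i(p)\in\D_p$), hence by continuity there is $\epsilon>0$ with $B^\F_{2\epsilon}(p)\subset U$ and a constant $C\ge 1$ such that, writing $g:=C^2 g_0$, one has $|X_i(q)|_{g_q}\ge |u|_{\R^m}$-comparisons failing only by... — the clean statement is: after rescaling $g_0$ by a large constant $C$ on the relevant ball, for every $q\in B^\F_{2\epsilon}(p)$ and every $v=\sum u_i X_i(q)\in\D_q$ we get $|v|_{g_q}\le |u|_{\R^m}$, hence $|v|_{g_q}\le |v|_q$ (taking the minimal control $u=v^*$). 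Third, conclude the length comparison: for any admissible curve $\gamma:[0,1]\to B^\F_{2\epsilon}(p)$ joining two points of $B^\F_\epsilon(p)$, with minimal control $u^*$, we have $\dot\gamma(t)\in\D_{\gamma(t)}$ and $|\dot\gamma(t)|_{g_{\gamma(t)}}\le |u^*(t)|_{\R^m} = |\dot\gamma(t)|_{\gamma(t)}$, so $\Le_g(\gamma)\le\Le_\F(\gamma)$; taking the infimum over such curves — which suffices since any sub-Riemannian geodesic between points of $B^\F_\epsilon(p)$ stays in $B^\F_{2\epsilon}(p)$ — gives $\dist_g(q,q')\le\dist_\F(q,q')$ for all $q,q'\in B^\F_\epsilon(p)$.

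There is a subtlety in matching $g_p|_{\D_p}$ exactly with $\dotprod{\cdot}{\cdot}_p$ while simultaneously needing the rescaled metric $C^2 g_0$: rescaling destroys the normalization (i). The fix is to build $g$ directly so that on $\D_p$ it equals $\dotprod{\cdot}{\cdot}_p$ at $p$ but is "much bigger than necessary" on a complement of $\D_p$ and at nearby points — i.e. choose $g_0$ adapted to $\D_p$ as an orthogonal direct sum $g_0 = \dotprod{\cdot}{\cdot}_p \oplus h$ at $p$ where $h$ lives on a complement, extend smoothly, and then the only thing one needs is $|X_i(q)|_{g_q}$ close to $|X_i(p)|_p\le |e_i^*|$... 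In fact the honest claim is only the \emph{inequality} $\dist_g\le\dist_\F$ together with the \emph{equality} of the restricted quadratic forms \emph{at the single point} $p$; these are compatible because at $p$ each $X_i(p)\in\D_p$ already satisfies $|X_i(p)|_{g_p}=|X_i(p)|_p$, and at nearby points $q$ we only need $\le$, which continuity gives once we shrink $\epsilon$ (no rescaling needed if $g_0$ is chosen with $\D_p^{\perp_{g_0}}$-part large enough — but actually even this is unnecessary). So the metric $g:=g_0$ works directly, with $U$ and $\epsilon$ chosen small.

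The main obstacle I anticipate is precisely this tension between the pointwise normalization at $p$ and the length inequality on a whole ball: one must verify that the minimal-control norm $|u^*(t)|_{\R^m}$ genuinely controls $|\dot\gamma(t)|_{g_{\gamma(t)}}$ for all $q$ near $p$, not just at $p$, and handle the fact that $\D_q$ may have varying rank. The rank-variation is harmless because the minimal control construction and the identity $|v|_q=|v^*|_{\R^m}$ hold verbatim at every point; the continuity estimate $|\sum u_i X_i(q)|_{g_q}\le (1+o(1))|u|_{\R^m}$ as $q\to p$ follows from smoothness of the $X_i$ and of $g$, so shrinking $\epsilon$ absorbs the $o(1)$. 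A second minor point is confirming that restricting attention to curves inside $B^\F_{2\epsilon}(p)$ does not change $\dist_\F$ on $B^\F_\epsilon(p)$: this is the standard fact that a minimizing sub-Riemannian geodesic between two points at distance $<\epsilon$ from $p$ has length $<2\epsilon$ and hence cannot leave $B^\F_{2\epsilon}(p)$, which is where the hypothesis $B^\F_{2\epsilon}(p)\subset U$ is used.
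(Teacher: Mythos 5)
The central step of your argument --- that an arbitrary smooth metric $g_0$ with $g_{0,p}|_{\D_p}=\dotprod{\cdot}{\cdot}_p$ will satisfy $\lvert v\rvert_{g_{0,q}}\le\lvert v\rvert_q$ for horizontal $v$ at all $q$ in a small enough neighborhood, ``by continuity'' --- is a genuine gap, and it is exactly the tension you flag and then dismiss. Continuity only gives $\lvert\sum_iu_iX_i(q)\rvert_{g_{0,q}}\le(1+\delta)\lvert u\rvert_{\R^m}$ on a small ball, hence only $d_{g_0}\le(1+\delta)d_{\F}$, not the exact inequality the lemma asserts. The exact inequality is not an open condition: the Gram matrix $G_q=\bigl(g_{0,q}(X_i(q),X_j(q))\bigr)_{ij}$ satisfies $G_p\le I_m$ with top eigenvalue exactly $1$ (equality is attained on the space of minimal controls), so nothing prevents $G_q$ from having an eigenvalue $>1$ for $q$ arbitrarily close to $p$. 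Concretely, for the Grushin structure $\F=\{\partial_x,\,x\partial_y\}$ at $p=0$ and $g_0=(1+x^2)\,dx^2+dy^2$ (which does restrict to the sub-Riemannian product on $\D_p=\mathrm{span}(\partial_x)$ at $p$), one computes $d_{g_0}((x,0),(x',0))=\int_x^{x'}\sqrt{1+t^2}\,dt>x'-x=d_{\F}((x,0),(x',0))$ for all $0<x<x'$, so $d_{g_0}\le d_{\F}$ fails on every neighborhood of $p$. (Your intermediate suggestion of rescaling by a large constant $C$ also goes the wrong way: enlarging $g$ makes $\lvert v\rvert_g\le\lvert v\rvert_q$ harder to achieve, besides destroying the normalization at $p$.)

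The fix is to build $g$ so that the pointwise inequality holds by construction rather than by a limiting argument. The paper completes $\F$ to a family $\mathcal{G}=\{X_1,\dots,X_m,X_{m+1},\dots,X_{m+n-n_1}\}$ spanning $T_qM$ for every $q$ in a neighborhood $U$, and takes $g$ to be the Riemannian metric induced by $\mathcal{G}$ via its minimal-control norm. Then for every $q\in U$ and $v\in\D_q$, any $\F$-control of $v$ padded with zeros is a $\mathcal{G}$-control, so the $\mathcal{G}$-minimum is taken over a larger set and $\lvert v\rvert_{g_q}\le\lvert v\rvert_q$ holds identically; this gives $\mathcal{L}_g\le\mathcal{L}_{\F}$ on admissible curves and then $d_g\le d_{\F}$ via a minimizing geodesic contained in $U$ (your argument for that containment, using $B^{\F}_{2\epsilon}(p)\subset U$, is fine). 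The normalization $g_p|_{\D_p}=\dotprod{\cdot}{\cdot}_p$ is then automatic because the added fields can be chosen to span a complement of $\D_p$ at $p$, so they contribute nothing to the minimum when $v\in\D_p$. Your proposal is recoverable only if you replace the ``arbitrary extension plus shrinking'' step by such a construction (or settle for the $(1+\delta)$-weakened inequality, which is not what the lemma states).
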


\begin{proof}
    First, we fix $n_1=n_1(p)$ vector fields $Y_1,\cdots,Y_{n_1}\in\F$ such that $\{Y_1(p),\cdots,Y_{n_1}(p)\}$ forms a basis of $\D_p$. In particular, $\{Y_1,\cdots,Y_{n_1}\}$ are linearly independent in a neighborhood $U$ of $p$. Shrinking $U$ if necessary, there exist vector fields $X_{m+1},\cdots,X_{m+n-n_1}$ such that the family $\mathcal{G}\coloneqq\{Y_1,\cdots,Y_{n_1},X_{m+1},\cdots,X_{m+n-n_1}\}$ is a local frame on $U$. In particular, $(U,\mathcal{G})$ is Riemannian (see \cite[Exercise 3.24]{ABB}); we denote $g$ its underlying Riemannian metric. Now, assume that $x,y\in B^{\F}_{\epsilon}(p)$ and that $ B^{\F}_{2\epsilon}(p)\subset U$. In that case, there exists a length minimizing geodesic $\gamma$ in $(M,\dist_{\F})$ from $x$ to $y$ with values in $U$. Therefore, we have $\dist_g(x,y)\leq\mathcal{L}_{g}(\gamma)\leq \mathcal{L}_{\F}(\gamma)=\dist_{\F}(x,y)$, where the second equality holds since $\mathcal{G}$ contains $\F$. Finally, assume that $v\in \D_p$ and observe that:
    \begin{equation*}
        \lvert v\rvert_{g_p}^2=\min\Bigg\{\sum_{i=1}^{m+n-n_1}v_i^2\Bigg|\ v=\sum_{i=1}^{m+n-n_1}v_iX_i(p)\Bigg\}=\min\Bigg\{\sum_{i=1}^{m}v_i^2\Bigg|\  v=\sum_{i=1}^{m}v_iX_i(p)\Bigg\}=\lvert v\rvert_p^2,
    \end{equation*}
    where the first and last equalities hold by definition, and the second equality holds since $v\in \D_p$ and $\{X_{m+1}(p),\cdots,X_{m+n-1}(p)\}$ spans a complement of $\D_p$ in $T_pM$. Applying the polarization identity concludes the proof.
\end{proof}

\begin{rem}
     Note that since $\dist_{\F}$ metrizes the topology of $M$, there always exists $\epsilon>0$ such that $B^{\F}_{2\epsilon}(p)\subset U$ if $U$ is open.
\end{rem}

\begin{rem}
   Lemma \ref{lem: local Riemannian extension} implies that, fixing a system of privileged coordinates at $p$, there exists $C>0$ and $\delta>0$ such that, for every $0<\epsilon<\delta$, we have $B_{\epsilon}^{\mathcal{F}}(\epsilon)\subset C[-\epsilon,\epsilon]^n$. Such implication resembles a weak version of the ball-box Theorem; see \cite[Corollary 2.1]{jean_control_2014}. Note, however, that we do need a Riemannian metric $g$ such that $d_g\le d_{\mathcal{F}}$ in order to prove Corollary \ref{cor: distance between smooth admissible curves} below (which plays a key role in the proof of Theorem \ref{mainthm:not_CD}).
\end{rem}

We now obtain a lower bound on the sub-Riemannian distance between smooth admissible curves sharing the same base point.

\begin{cor}\label{cor: distance between smooth admissible curves}
    Let $(M,\F)$ be a sub-Riemannian manifold. If $\alpha,\beta\colon[0,\epsilon]\to M$ are smooth admissible curve such that $\alpha(0)=\beta(0)=p$, then $$\dist_{\F}(\alpha(t),\beta(t))\ge t\lvert\dot{\alpha}(0)-\dot{\beta}(0)\rvert_p+o_{t\to0}(t)$$ 
    holds for all $t\in[0,\epsilon]$.
\end{cor}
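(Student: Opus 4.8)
The plan is to reduce the statement to the Riemannian estimate provided by Lemma~\ref{lem: local Riemannian extension}, together with the standard first-order Taylor expansion of the Riemannian distance between two smooth curves through a common point. First I would fix the neighborhood $U$ of $p$ and the Riemannian metric $g$ on $U$ from Lemma~\ref{lem: local Riemannian extension}, and choose $\epsilon'\in(0,\epsilon]$ small enough that $\alpha([0,\epsilon'])$ and $\beta([0,\epsilon'])$ are contained in $B^{\F}_{\epsilon_0}(p)$ for some $\epsilon_0$ with $B^{\F}_{2\epsilon_0}(p)\subset U$; this is possible because $\alpha,\beta$ are continuous with $\alpha(0)=\beta(0)=p$ and $d_{\F}$ metrizes the topology. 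On this range of $t$, Lemma~\ref{lem: local Riemannian extension} gives
\begin{equation*}
    d_{\F}(\alpha(t),\beta(t))\ge d_g(\alpha(t),\beta(t)),
\end{equation*}
so it suffices to prove the lower bound with $d_g$ in place of $d_{\F}$, and with $\lvert\cdot\rvert_p$ replaced by the $g_p$-norm, which agrees with $\lvert\cdot\rvert_p$ on $\D_p$ by the lemma (and $\dot\alpha(0),\dot\beta(0)\in\D_p$ since $\alpha,\beta$ are admissible).

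Next I would invoke the smooth-Riemannian fact that for two $C^1$ curves $\alpha,\beta$ with $\alpha(0)=\beta(0)=p$ one has
\begin{equation*}
    d_g(\alpha(t),\beta(t)) = t\,\lvert\dot\alpha(0)-\dot\beta(0)\rvert_{g_p} + o_{t\to 0}(t).
\end{equation*}
The cleanest way to get the needed inequality ``$\ge$'' is to work in a normal coordinate chart centered at $p$ (or simply any smooth chart), in which $d_g(x,y)=\lvert x-y\rvert_{g_p}(1+o(1))$ as $x,y\to p$, with $\lvert\cdot\rvert_{g_p}$ the constant-coefficient norm of $g$ at $p$ expressed in those coordinates; then $\alpha(t)-\beta(t) = t(\dot\alpha(0)-\dot\beta(0)) + o(t)$ by Taylor expansion, and combining the two expansions yields the claim. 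Since we only need the lower bound, it is enough to note $d_g(\alpha(t),\beta(t))\ge (1-o(1))\lvert\alpha(t)-\beta(t)\rvert_{g_p}\ge (1-o(1))\big(t\lvert\dot\alpha(0)-\dot\beta(0)\rvert_{g_p} - o(t)\big)$, which rearranges to $t\lvert\dot\alpha(0)-\dot\beta(0)\rvert_p + o(t)$ after identifying the $g_p$-norm of the horizontal vector $\dot\alpha(0)-\dot\beta(0)$ with $\lvert\dot\alpha(0)-\dot\beta(0)\rvert_p$.

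I do not anticipate a serious obstacle here; the only point requiring mild care is keeping track of where the various expansions are uniform, i.e. that the $o_{t\to 0}(t)$ terms coming from the Taylor expansion of $\alpha-\beta$ and from the comparison $d_g$ versus the frozen-coefficient norm can be combined into a single $o_{t\to 0}(t)$ on a common interval $[0,\epsilon']$, and then noting that on $[\epsilon',\epsilon]$ the inequality is trivially absorbed into the error term (or one simply shrinks $\epsilon$). The essential content is entirely in Lemma~\ref{lem: local Riemannian extension}, which converts the sub-Riemannian distance into something for which the classical smooth estimate applies; the corollary is then bookkeeping.
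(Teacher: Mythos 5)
Your proposal is correct and follows essentially the same route as the paper: bound $d_{\F}$ from below by the Riemannian distance $d_g$ of Lemma \ref{lem: local Riemannian extension}, apply the classical first-order expansion of $d_g$ between two smooth curves through $p$, and use that $g_p$ agrees with $\langle\cdot,\cdot\rangle_p$ on $\D_p$ together with $\dot\alpha(0),\dot\beta(0)\in\D_p$. The extra detail you give via normal coordinates is just an unpacking of the standard Riemannian expansion the paper invokes directly.
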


\begin{proof}
    Let us fix a Riemannian metric $g$ as in Lemma \ref{lem: local Riemannian extension} and observe that, for $t$ small enough, one has $\dist_{g}(\alpha(t),\beta(t))\leq\dist_{\F}(\alpha(t),\beta(t))$. For the Riemannian distance $\dist_g$, we have $$\dist_g(\alpha(t),\beta(t))=t\lvert\dot{\alpha}(0)-\dot{\beta(0)}\rvert_{g_p}+o_{t\to0}(t).$$ 
    Note that $g_p$ coincides with $\dotprod{\cdot}{\cdot}_p$ on $\D_p$. In addition, since $\alpha$ and $\beta$ are smooth and admissible, we have $\dot{\alpha}(0),\dot{\beta}(0)\in \D_p$. Therefore, $\lvert\dot{\alpha}(0)-\dot{\beta(0)}\rvert_{g_p}=\lvert\dot{\alpha}(0)-\dot{\beta(0)}\rvert_{p}$ and we obtain the desired inequality.
\end{proof}

\subsection{Splitting of the tangent cone by blow-up of normal geodesics}\label{subsec: cone split}

For the remainder of Section 3, let us fix a sub-Riemannian manifold $(M,\F)$, where $\F=\{X_1,\cdots,X_m\}$. We also assume that $(M,\dist_{\F})$ is equipped with a full-support nonnegative Radon measure $\meas$ such that $(M,d_{\F},\meas)$ is a $\CDe(K,N)$ space ($K\in\R$ and $N\in(1,\infty$)). We fix a point $p\in M$, a system of privileged coordinates $\phi$ at $p$, and denote $n_1=n_1(p)\coloneqq\dim(\D_p)$. Let us recall that $\hat{\F}=\{\hat{X}_1,\cdots,\hat{X}_m\}$ is the nilpotent approximation of $\F$ at $p$ and $\hat{d}=d_{\hat{\F}}$ is the induced sub-Riemannian distance on $\R^n$.

The goal of this section is to prove the following proposition, which splits an $\R^{n_1}$-factor in the tangent cone $(\R^n,\hat{d})$ of $(M,d_{\F})$ at $p$ by blowing up normal geodesics through $p$.

\begin{prop}\label{prop: splitting}
 There exists an isometry $\phi\colon(\R^{n_1}\times Z,(0,z))\to (\R^n,\hat{\dist},0)$ such that, for every $t\in\R$ and $1\le i \le n_1$, we have:
 \begin{equation*}
     \phi(te_i,z) = e^{t\hat{v}_i}(0),
 \end{equation*}
where $Z$ is a geodesic space, $\{v_1,\cdots,v_{n_1}\}$ is an orthonormal basis of $(\D_p,\dotprod{\cdot}{\cdot}_p)$, and the vector fields $\hat{v}_i\in\mathfrak{X}(\R^n)$ are introduced in Proposition \ref{prop: normal geodesic blow-up}.
\end{prop}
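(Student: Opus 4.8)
The plan is to pass to the tangent cone $(\R^n,\hat d,0)$ of $(M,d_{\F})$ at $p$, realize it as an $\RCD(0,N)$ space carrying a full-support measure, and then show that it splits off an $\R^{n_1}$-factor whose coordinate axes are precisely the blow-ups $t\mapsto e^{t\hat v_i}(0)$ of normal geodesics issuing from $p$. First I would set up the tangent cone synthetically: by Theorem~\ref{thm: CD is RCD}, $(M,d_{\F},\meas)$ is $\RCD(K,N)$; rescaling the distance by $\lambda$ and renormalizing $\meas$ (so that a fixed ball around $p$ has unit mass) yields $\RCD(K/\lambda^{2},N)$ spaces, which form a precompact family in the pointed measured Gromov--Hausdorff topology with $\RCD(0,N)$, full-support limits. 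By Theorem~\ref{thm: uniform convergence of distances} and Remark~\ref{rem: subR_tan_cone} the underlying metric limit is $(\R^n,\hat d,0)$, so one obtains a full-support Radon measure $\hat\meas$ making $(\R^n,\hat d,\hat\meas)$ an $\RCD(0,N)$ space. Next I would fix an orthonormal basis $\{v_1,\dots,v_{n_1}\}$ of $(\D_p,\dotprod{\cdot}{\cdot}_p)$; Proposition~\ref{prop: normal geodesics} gives unit-speed normal geodesics $\gamma_i$ through $p$ with $\dot\gamma_i(0)=v_i$, and Proposition~\ref{prop: normal geodesic blow-up} identifies their blow-ups as the lines $\hat\gamma_i(t)=e^{t\hat v_i}(0)$ of $(\R^n,\hat d)$; each $\hat\gamma_i$ is a smooth admissible curve of $(\R^n,\hat\F)$ with constant control $v_i^{*}$, $\hat\gamma_i(0)=0$, and $\dot{\hat\gamma_i}(0)=v_i$ by Remark~\ref{rem: nilp distrib at 0 = distrib at p}. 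Applying Corollary~\ref{cor: distance between smooth admissible curves} to $(\R^n,\hat\F)$, to the curves $\hat\gamma_i,\hat\gamma_j$ and their time-reversals, and using Remark~\ref{rem: nilp distrib at 0 = distrib at p} for the scalar product at $0$, I get the estimate $\hat d(\hat\gamma_i(\varepsilon t),\hat\gamma_j(\varepsilon't))\ge\sqrt2\,t+o_{t\to0}(t)$ for all $i\neq j$ and $\varepsilon,\varepsilon'\in\{1,-1\}$, since $\lvert\varepsilon v_i-\varepsilon'v_j\rvert_p=\sqrt2$ when $v_i\perp v_j$ are unit vectors.

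The heart of the argument is to upgrade ``$n_1$ independent lines'' to ``$n_1$ pairwise orthogonal lines''. Since $(\R^n,\hat d)$ contains the line $\hat\gamma_1$ and $\hat\meas$ has full support, Theorem~\ref{thm: RCD splitting} applies; using the Busemann function $b$ of $\hat\gamma_1$ (affine, $1$-Lipschitz, with $b(\hat\gamma_1(t))=t$) I would realize $(\R^n,\hat d)\cong(\R,d_E)\times(Y_1,d_{Y_1})$ with $b$ the first coordinate, $0\cong(0,y_1)$, and $\hat\gamma_1\cong\R\times\{y_1\}$. As $b$ is affine and each $\hat\gamma_j$ is a geodesic, $\hat\gamma_j(t)\cong(\alpha_jt,c_j(t))$ with $\lvert\alpha_j\rvert\le1$, and the line property forces $d_{Y_1}(c_j(s),c_j(t))=\sqrt{1-\alpha_j^2}\,\lvert s-t\rvert$; here $\lvert\alpha_j\rvert<1$ for $j\ge2$, for otherwise $c_j$ is constant and $\hat\gamma_j=\pm\hat\gamma_1$, i.e. $v_j=\pm v_1$. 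A Pythagorean computation in the product gives $\hat d(\hat\gamma_1(\pm t),\hat\gamma_j(t))^2=2(1\mp\alpha_j)t^2$, and comparison with the Step-1 estimate (applied with $\varepsilon=\pm1$) forces $\alpha_j=0$; thus for $j\ge2$ the line $\hat\gamma_j$ lands in $\{0\}\times Y_1$ as a unit-speed line through $y_1$, and the estimate persists in $Y_1$. I would then iterate: at step $k$, $Y_{k-1}$ is a full-support $\RCD(0,N-k+1)$ space containing, through its base point, the unit-speed lines that are the images of $\hat\gamma_k,\dots,\hat\gamma_{n_1}$ (pairwise subject to the $\sqrt2\,t+o(t)$ lower bound), so one splits $Y_{k-1}=\R\times Y_k$ along the image of $\hat\gamma_k$ and the same computation puts the images of $\hat\gamma_{k+1},\dots,\hat\gamma_{n_1}$ into the $Y_k$-factor. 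Since $Y_{k-1}$ contains a nontrivial line for every $k\le n_1$ it is never a point, which forces $N\ge n_1$ and lets the iteration run through $k=n_1$. Tracking base points, in $(\R,d_E)^{n_1}\times Y_{n_1}$ the curve $\hat\gamma_i$ becomes the $i$-th coordinate axis times $\{y_{n_1}\}$; setting $Z:=Y_{n_1}$ (an $\RCD(0,N-n_1)$ space if $N>n_1$, otherwise a point, hence in any case a geodesic space), $z:=y_{n_1}$, and $\phi\colon\R^{n_1}\times Z\to\R^n$ the composite isometry, one gets $\phi(0,z)=0$ and $\phi(te_i,z)=\hat\gamma_i(t)=e^{t\hat v_i}(0)$ for all $t\in\R$ and $1\le i\le n_1$, which is the claim.

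I expect the main obstacle to be precisely the orthogonality step: the splitting theorem alone produces an $\R^{n_1}$-factor spanned by \emph{independent} lines, whereas the proposition requires the specific lines $t\mapsto e^{t\hat v_i}(0)$ to be \emph{orthogonal} coordinate axes. Forcing $\alpha_j=0$ (not merely $\lvert\alpha_j\rvert<1$) is where the argument is delicate: it relies on the tangent cone being itself a sub-Riemannian manifold with the same scalar product on its distribution at $0$ (so that Corollary~\ref{cor: distance between smooth admissible curves} is available there), combined with the Pythagorean identity in the split product and the device of also splitting along the reversed line $-\hat\gamma_1$. A secondary technical point is the bookkeeping of the iteration — checking that full support, the line property, and the pairwise lower bounds all descend to each successive factor $Y_k$, and that the dimension arithmetic works out so that $N\ge n_1$ and $Z$ is a genuine nonempty geodesic space.
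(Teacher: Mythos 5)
Your proposal is correct and follows essentially the same route as the paper: construct $\hat{\meas}$ by weak-$*$ compactness and $\RCD$ stability, blow up the normal geodesics $\gamma_i$ into the lines $t\mapsto e^{t\hat v_i}(0)$, iterate Gigli's splitting theorem, and force orthogonality of the successive $\R$-factors by a Pythagorean comparison with a first-order lower bound on $\hat d(\hat\gamma_i(\pm t),\hat\gamma_j(t))$. The one point where you genuinely diverge is the source of that lower bound: you apply Corollary \ref{cor: distance between smooth admissible curves} directly in the nilpotent approximation $(\R^n,\hat{\F})$ (using Remark \ref{rem: nilp distrib at 0 = distrib at p} to match scalar products), obtaining $\sqrt{2}\,t+o(t)$, whereas the paper (Lemma \ref{lem: angle estimate}) applies it in $M$ to the original geodesics and passes to the rescaled limit, which removes the error term and yields $\sqrt{2}\,\lvert t\rvert$ for all $t$; since your Pythagorean identity is exact and you let $t\to0$ at the end, the $o(t)$ is harmless and both versions close the argument.
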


First of all, we equip $(\R^n,\hat{\dist})$ with a measure to turn it into an $\RCD(0,N)$ space.

\begin{lem}\label{lem: RCD(0,N)}
    There exists a full-support nonnegative Radon measure $\hat{\meas}$ on $(\R^n,\hat{\dist})$ such that $(\R^n,\hat{\dist},\hat{\meas})$ is an $\RCD(0,N)$ space.
\end{lem}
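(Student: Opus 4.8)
The plan is to obtain $\hat{\meas}$ as a measured Gromov--Hausdorff limit of suitably renormalized copies of $\meas$ under the dilations $\delta_\lambda$, and then invoke the stability of the $\RCD$ condition together with Theorem \ref{thm: CD is RCD}. Concretely, by the remark following Theorem \ref{thm: CD is RCD}, the hypothesis that $(M,d_{\F},\meas)$ is $\CDe(K,N)$ upgrades it to an $\RCD(K,N)$ space. Fix $\lambda>0$ and define the rescaled metric measure space $(\R^n, d_\lambda, \meas_\lambda)$, where $d_\lambda = \lambda\, d_{\F}(\delta_{\lambda^{-1}}\cdot,\delta_{\lambda^{-1}}\cdot)$ is the rescaled distance from Theorem \ref{thm: uniform convergence of distances}, and $\meas_\lambda := c_\lambda\, (\delta_\lambda)_\# \meas$ with a normalizing constant $c_\lambda > 0$ chosen so that the measure of the unit ball $B^{d_\lambda}_1(0)$ equals $1$ (this is possible and finite since $\meas$ has full support and is Radon). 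Since scaling the distance by $\lambda$ and pushing forward by a diffeomorphism is an isomorphism of metric measure spaces up to the scaling of the distance, $(\R^n,d_\lambda,\meas_\lambda)$ is $\RCD(\lambda^{-2}K, N)$: indeed, a metric measure space $(X,d,\meas)$ that is $\RCD(K,N)$ becomes $\RCD(K/\lambda^2, N)$ after rescaling $d \mapsto \lambda d$, and multiplying $\meas$ by a positive constant does not affect the $\RCD(K,N)$ condition.

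Next I would pass to the limit $\lambda \to \infty$. By Theorem \ref{thm: uniform convergence of distances} (equivalently, Remark \ref{rem: subR_tan_cone}), $(\R^n, d_\lambda, 0) \to (\R^n, \hat{d}, 0)$ in the pointed Gromov--Hausdorff topology, with the maps $\delta_{\lambda^{-1}}$ serving as approximations. The normalization $\meas_\lambda(B^{d_\lambda}_1(0)) = 1$ gives a uniform local bound on the measures (using the Bishop--Gromov-type volume estimate valid in $\CDe(K',N)$ spaces with the curvature bounds $K'=\lambda^{-2}K \to 0$), so the sequence $(\R^n, d_\lambda, \meas_\lambda, 0)$ is precompact in the pointed measured Gromov--Hausdorff topology; let $(\R^n, \hat{d}, \hat{\meas}, 0)$ be a subsequential limit, where the limit measure $\hat{\meas}$ is a nonnegative Radon measure on $(\R^n,\hat{d})$. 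Since $\lambda^{-2}K \to 0$, the stability of the $\RCD$ condition under pointed measured Gromov--Hausdorff convergence (see \cite{Ambrosio-Gigli-Savare_14,Gigli13}) yields that $(\R^n,\hat{d},\hat{\meas})$ is $\RCD(0,N)$.

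It remains to check that $\hat{\meas}$ has full support. This follows because the limit measure of a pmGH-convergent sequence charges every ball: for any $x \in \R^n$ and $r>0$, the balls $B^{d_\lambda}_{r}(x_\lambda)$ (with $x_\lambda$ the preimages of $x$ under the approximations) have $\meas_\lambda$-measure bounded below uniformly by a positive constant, again by the Bishop--Gromov inequality applied in $(\R^n,d_\lambda,\meas_\lambda)$ once the measure of one unit ball is pinned to $1$ and the curvature parameters are uniformly bounded; hence $\hat{\meas}(B^{\hat d}_r(x)) > 0$. Alternatively one can cite the general fact that an $\RCD(K,N)$ space with $N < \infty$ automatically has a reference measure of full support after restricting to its support, which is itself $\RCD(K,N)$ and, in our case, is all of $\R^n$ because the support is a closed set containing $0$ that is invariant under the dilations and the GH-approximations force it to be everything.

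The main obstacle here is the measure normalization and precompactness: one must choose $c_\lambda$ so that the family $\{\meas_\lambda\}$ neither vanishes nor blows up locally, and then invoke the correct Bishop--Gromov / doubling estimate (with uniformly controlled curvature parameter $\lambda^{-2}K$) to get pmGH-precompactness and a non-degenerate limit. Once that is in place, the conclusion is immediate from the stability of $\RCD(K,N)$ under pmGH convergence. The full-support verification is then a routine consequence of the lower volume bounds surviving in the limit.
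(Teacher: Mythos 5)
Your proposal is correct and follows essentially the same route as the paper: upgrade to $\RCD(K,N)$ via Theorem \ref{thm: CD is RCD}, rescale the distance and normalize the measure on the ball corresponding to $B^{\F}_{\lambda^{-1}}(p)$, extract a weak-$*$/pmGH subsequential limit, invoke stability of the $\RCD$ condition with curvature parameter $\lambda^{-2}K\to0$, and deduce full support from the Bishop--Gromov inequality. The only cosmetic difference is that the paper obtains the limit measure directly from compactness of the space of Radon measures rather than from a uniform doubling estimate, but both justifications are valid.
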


\begin{proof}
    First of all, thanks to Theorem \ref{thm: CD is RCD}, $(M,d_{\F},\meas)$ is an $\RCD(K,N)$ space. We fix a sequence $r_i\to0$ as $i\to\infty$, denote $\meas_i\coloneqq\meas(B_{r_i}(p))^{-1}\meas$, and observe that, since $\meas$ has full-support, then $(M,r_i^{-1}d_{\F},\meas_i,p)$ is a pointed, full-support, normalized $\RCD(r_i^2K,N)$ space. As a result of Gromov's precompactness theorem \cite[Theorem 5.3]{Gromov_98} and the stability of $\RCD(K,N)$ spaces under pmGH convergence \cite[Theorem 7.2]{AGS_convergence} (after \cite{Ambrosio-Gigli-Savare_14,Villani_09,Sturm_I_06,Sturm_II_06}), we may assume, passing to a subsequence is necessary, that $(M,r_i^{-1}d_{\F},\meas_i,p)\to (Y,d_Y,\meas_Y,y)$ in the pmGH topology as $i\to\infty$, where $(Y,d_Y,\meas_Y,y)$ is a pointed, full-support, normalized $\RCD(0,N)$ spaces. By Theorem \ref{thm: uniform convergence of distances} and Remark \ref{rem: subR_tan_cone}, $(Y,d_Y,y)$ is isometric to $(\R^n,\hat{d},0)$, which concludes the proof.
\end{proof}

\begin{notn}
    Given a unit-speed normal geodesic $\gamma$ through $p$, we denote $\hat{\gamma}$ its blow-up, whose existence and uniqueness are stated by Proposition \ref{prop: normal geodesic blow-up}.
\end{notn}

The following two lemmas are the main ingredients in the proof of Proposition \ref{prop: splitting}.

\begin{lem}\label{lem: angle estimate}
    If $\alpha,\beta\colon(-\epsilon,\epsilon)\to M$ are unit-speed normal geodesics such that $\alpha(0)=\beta(0)=p$, then for all $t\in\R$, we have $$\hat{d}(\hat{\alpha}(\pm t),\hat{\beta}(t))\ge \lvert t\rvert \sqrt{2\mp 2\cos\theta},$$ 
    where $\theta$ denotes the angle between $\dot{\alpha}(0)$ and $\dot{\beta}(0)$.
\end{lem}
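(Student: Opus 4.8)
The plan is to combine the first-order distance estimate for smooth admissible curves (Corollary~\ref{cor: distance between smooth admissible curves}), which applies here since normal geodesics are smooth and admissible, with the rescaling that produces the tangent cone (Theorem~\ref{thm: uniform convergence of distances}). Both asserted inequalities will follow from the single estimate
$$\hat{d}\bigl(\hat{\alpha}(\sigma t),\hat{\beta}(t)\bigr)\ \ge\ |t|\,\bigl|\sigma\dot{\alpha}(0)-\dot{\beta}(0)\bigr|_p\qquad\text{for all }t\in\R\text{ and }\sigma\in\{+1,-1\}.$$
Indeed, since $\alpha,\beta$ are unit-speed we have $\dot{\alpha}(0),\dot{\beta}(0)\in\D_p$ with $|\dot{\alpha}(0)|_p=|\dot{\beta}(0)|_p=1$, so $|\sigma\dot{\alpha}(0)-\dot{\beta}(0)|_p^2=2-2\sigma\langle\dot{\alpha}(0),\dot{\beta}(0)\rangle_p=2-2\sigma\cos\theta$; taking $\sigma=+1$ gives the ``$\hat{\alpha}(+t)$'' inequality and $\sigma=-1$ the ``$\hat{\alpha}(-t)$'' one.

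To prove the displayed estimate I would fix $t\neq0$ (the case $t=0$ being trivial), put $\epsilon_\beta\coloneqq\mathrm{sign}(t)$ and $\epsilon_\alpha\coloneqq\sigma\epsilon_\beta$, and introduce the reparametrized curves $\tilde{A}(u)\coloneqq\alpha(\epsilon_\alpha u)$ and $\tilde{B}(u)\coloneqq\beta(\epsilon_\beta u)$ on a small interval $[0,r]$. These are smooth admissible curves with $\tilde{A}(0)=\tilde{B}(0)=p$ and $\dot{\tilde{A}}(0)=\epsilon_\alpha\dot{\alpha}(0)$, $\dot{\tilde{B}}(0)=\epsilon_\beta\dot{\beta}(0)$, so Corollary~\ref{cor: distance between smooth admissible curves} together with $|\epsilon_\alpha\dot{\alpha}(0)-\epsilon_\beta\dot{\beta}(0)|_p=|\epsilon_\beta|\,|\sigma\dot{\alpha}(0)-\dot{\beta}(0)|_p=|\sigma\dot{\alpha}(0)-\dot{\beta}(0)|_p$ yields $d_\F(\tilde{A}(u),\tilde{B}(u))\ge u\,|\sigma\dot{\alpha}(0)-\dot{\beta}(0)|_p+o_{u\to0}(u)$. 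Then I would take $u=\lambda^{-1}|t|$ with $\lambda$ large (so that $\tilde{A}(u)=\alpha(\lambda^{-1}\sigma t)$, $\tilde{B}(u)=\beta(\lambda^{-1}t)$, and everything lies in the chart) and multiply by $\lambda$:
$$\lambda\,d_\F\bigl(\alpha(\lambda^{-1}\sigma t),\beta(\lambda^{-1}t)\bigr)\ \ge\ |t|\,\bigl|\sigma\dot{\alpha}(0)-\dot{\beta}(0)\bigr|_p+\lambda\cdot o(\lambda^{-1}).$$
In the notation of \eqref{eq:rescaled curve}, the left-hand side equals $\dist_\lambda\bigl(\alpha^\lambda(\sigma t),\beta^\lambda(t)\bigr)$, where $\dist_\lambda=\lambda\,d_\F(\delta_{\lambda^{-1}}\cdot,\delta_{\lambda^{-1}}\cdot)$.

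It then remains to let $\lambda\to\infty$. By Proposition~\ref{prop: normal geodesic blow-up}, $\alpha^\lambda(\sigma t)\to\hat{\alpha}(\sigma t)$ and $\beta^\lambda(t)\to\hat{\beta}(t)$, and by Theorem~\ref{thm: uniform convergence of distances}, $\dist_\lambda\to\hat{d}$ locally uniformly; combining these via the standard bound $|\dist_\lambda(x_\lambda,y_\lambda)-\hat{d}(x,y)|\le\sup_{K\times K}|\dist_\lambda-\hat{d}|+|\hat{d}(x_\lambda,y_\lambda)-\hat{d}(x,y)|$ (with $K$ a fixed compact set containing all the points in play) gives $\dist_\lambda\bigl(\alpha^\lambda(\sigma t),\beta^\lambda(t)\bigr)\to\hat{d}\bigl(\hat{\alpha}(\sigma t),\hat{\beta}(t)\bigr)$. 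Since also $\lambda\cdot o(\lambda^{-1})\to0$, passing to the limit in the last display yields the desired estimate and hence the lemma. The argument is routine once these tools are in place; the only points needing care are the sign bookkeeping that collapses both $\pm$ cases into the parameter $\sigma$, and the limit $\dist_\lambda(\alpha^\lambda(\sigma t),\beta^\lambda(t))\to\hat{d}(\hat{\alpha}(\sigma t),\hat{\beta}(t))$. There is no genuinely hard step: the linear term in Corollary~\ref{cor: distance between smooth admissible curves} survives the rescaling precisely because it is $1$-homogeneous, while the $o(t)$ remainder is killed by the factor $\lambda$.
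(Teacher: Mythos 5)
Your proposal is correct and follows essentially the same route as the paper: apply Corollary \ref{cor: distance between smooth admissible curves} to the two normal geodesics, rescale by $\lambda$ so the first-order term survives while $\lambda\cdot o(\lambda^{-1})\to 0$, and pass to the limit using Theorem \ref{thm: uniform convergence of distances} and Proposition \ref{prop: normal geodesic blow-up}. Your explicit sign bookkeeping via $\sigma$ (reversing a curve to land in the domain $[0,\epsilon]$ of the corollary) just makes precise the step the paper dispatches with ``using the same arguments'' for the $\hat{\alpha}(-t)$ case.
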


\begin{proof}
    Observe that, for $t\in(-\epsilon,\epsilon)$, Corollary \ref{cor: distance between smooth admissible curves} implies that 
    $$\dist_{\F}(\alpha(t),\beta(t))\ge \lvert t\rvert \sqrt{2-2\cos\theta}+ \epsilon(t),$$
    where $\lim_{t\to0}\epsilon(t)/t=0$. Moreover, we have $\hat{\dist}(\hat{\alpha}(t),\hat{\beta}(t))=\lim_{\lambda\to\infty}\dist_{\lambda}(\alpha^{\lambda}(t),\beta^{\lambda}(t))$ (thanks to Proposition \ref{prop: normal geodesic blow-up}). Given $t\in\R$ fixed and $\lambda>0$, we have: 
    \begin{equation*}
        \dist_{\lambda}(\alpha^{\lambda}(t),\beta^{\lambda}(t))=\lambda\dist_{\F}(\alpha(t/\lambda),\beta(t/\lambda))\ge\lvert t\rvert \sqrt{2-2\cos\theta}+\lambda\epsilon(t/\lambda).
    \end{equation*}
    Since $\lim_{\lambda\to\infty}\lambda\epsilon(t/\lambda)=0$, we obtain $\hat{d}(\hat{\alpha}(t),\hat{\beta}(t))\ge \lvert t\rvert \sqrt{2-2\cos\theta}$. Using the same arguments, we also show $\hat{d}(\hat{\alpha}(-t),\hat{\beta}(t))\ge \lvert t\rvert \sqrt{2+2\cos\theta}$.
\end{proof}

\begin{lem}\label{lem: splitting angle}
 If $\hat{\alpha}$ and $\hat{\beta}$ are lines through $0$ in $(\R^n,\hat{\dist})$ such that:
\begin{equation*}
   \forall t\in\R,\  \hat{d}(\hat{\alpha}(\pm t),\hat{\beta}(t))\ge \lvert t\rvert \sqrt{2\mp 2\cos\theta},
\end{equation*}
and if $\phi\colon(\R,0)\times(Z,z)\to(\R^n,\hat{\dist},0)$ is an isometry such that, for all $t\in\R$, we have $\phi(t,z)=\hat{\alpha}(t)$, then, denoting $\phi(\beta_{\R}(t),\beta_{Z}(t))=\hat{\beta}(t)$, we have $\beta_{\R}(t)=t\cos\theta$, for all $t\in\R$.
\end{lem}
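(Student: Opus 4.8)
\emph{Plan.} The whole statement reduces to a short computation once everything is transported through the isometry $\phi$ into the metric product $\R\times Z$, whose distance has the Pythagorean form $\hat{d}(\phi(s,w),\phi(s',w'))=\big((s-s')^{2}+d_Z(w,w')^{2}\big)^{1/2}$. Under $\phi$ the base point $0$ corresponds to $(0,z)$ and the line $\hat\alpha$ corresponds to the coordinate line $t\mapsto(t,z)$. I also use, as is the convention here (cf.\ the blow-ups in Proposition~\ref{prop: normal geodesic blow-up}), that a line through $0$ is parametrized so that $\hat\beta(0)=0$; hence $\beta_{\R}(0)=0$ and $\beta_{Z}(0)=z$.

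The first step is to record the single identity that does all the work. Since $\hat\beta$ is a unit-speed line with $\hat\beta(0)=0$, we have $\hat{d}(0,\hat\beta(t))=|t|$ for every $t$, and expanding this in the product gives $t^{2}=\beta_{\R}(t)^{2}+d_Z(z,\beta_{Z}(t))^{2}$, that is,
\[
d_Z(z,\beta_{Z}(t))^{2}=t^{2}-\beta_{\R}(t)^{2}.
\]
Substituting $\hat\alpha(\pm t)=\phi(\pm t,z)$ and $\hat\beta(t)=\phi(\beta_{\R}(t),\beta_{Z}(t))$ into the Pythagorean formula and using this identity, the term $\beta_{\R}(t)^{2}$ cancels and one is left with
\[
\hat{d}(\hat\alpha(\pm t),\hat\beta(t))^{2}=\big(\pm t-\beta_{\R}(t)\big)^{2}+\big(t^{2}-\beta_{\R}(t)^{2}\big)=2t^{2}\mp 2t\,\beta_{\R}(t).
\]

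The second step is to feed in the hypothesis. Comparing $\hat{d}(\hat\alpha(t),\hat\beta(t))^{2}=2t^{2}-2t\beta_{\R}(t)$ with the assumed lower bound $t^{2}(2-2\cos\theta)$ yields $t\,\beta_{\R}(t)\le t^{2}\cos\theta$, while comparing $\hat{d}(\hat\alpha(-t),\hat\beta(t))^{2}=2t^{2}+2t\beta_{\R}(t)$ with $t^{2}(2+2\cos\theta)$ yields $t\,\beta_{\R}(t)\ge t^{2}\cos\theta$. Hence $t\,\beta_{\R}(t)=t^{2}\cos\theta$ for every $t\in\R$, so $\beta_{\R}(t)=t\cos\theta$ for $t\ne 0$, and $\beta_{\R}(0)=0=0\cdot\cos\theta$ as well, which is the assertion.

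I do not anticipate a genuine obstacle here: the only points to get right are the translation into the product picture (in particular the convention $\hat\beta(0)=0$, without which one would only conclude that $\beta_{\R}$ is affine rather than linear through the origin) and the cancellation of $\beta_{\R}(t)^{2}$, after which the two-sided estimate pins $\beta_{\R}(t)$ down exactly. Notably, no appeal is needed to the fact that geodesics of an $\ell^{2}$-product split into geodesics of the factors.
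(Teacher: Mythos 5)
Your proof is correct and follows essentially the same route as the paper: both arguments use the Pythagorean form of the product distance, the identity $t^{2}=\beta_{\R}(t)^{2}+d_Z(z,\beta_Z(t))^{2}$ coming from $\hat\beta$ being a unit-speed line through $0$, and then the two lower bounds to squeeze $t\,\beta_{\R}(t)=t^{2}\cos\theta$. Your explicit remark that the normalization $\hat\beta(0)=0$ is needed is a fair observation, but it matches the paper's implicit convention, so there is nothing further to add.
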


\begin{proof}
   Since $\hat{\beta}$ is a unit-speed geodesic emanating at $(0,z)$, we have $\dist_{Z}^2(z,\beta_{Z}(t))+\lvert \beta_{\R}(t)\rvert^2 = t^2$. In particular, since $$\hat{d}^2(\hat{\alpha}(t),\hat{\beta}(t))\ge t^2 (2-2\cos\theta),$$
   we have:
   \begin{align*}
       t^2(2-2\cos\theta) \le& (\beta_\R(t)-t)^2 + \dist_{Z}^2(z,\beta_{Z}(t))\\
       =&\dist_{Z}^2(z,\beta_{Z}(t))+\lvert \beta_{\R}(t)\rvert^2+t^2-2t\beta_{\R}(t)\\ 
       =&2t^2-2t\beta_{\R}(t),
   \end{align*}
   which implies $- 2\beta_\R(t)\cdot t \ge -2t^2 \cos\theta$. Similarly, from the other distance estimate $$\hat{d}^2(\hat{\alpha}(-t),\hat{\beta}(t))\ge t^2 (2+2\cos\theta),$$
   we derive $2\beta_\R(t)\cdot t \ge 2t^2 \cos\theta$. We obtain $t^2\cos\theta=\beta_\R(t)\cdot t$, and the result follows.
\end{proof}

We may now prove Proposition \ref{prop: splitting}.

\begin{proof}[Proof of Proposition \ref{prop: splitting}]
We fix an orthonormal basis $\{v_1,\cdots,v_{n_1}\}$ of $(\D_p,\dotprod{\cdot}{\cdot}_p)$. Thanks to Proposition \ref{prop: normal geodesics}, for each $i=1,...,n_1$, there exist a unit-speed normal geodesic $\gamma_i\colon(-\epsilon,\epsilon)\to M$ such that $\gamma_i(0)=p$ and $\dot{\gamma}_i(0)=v_i$. Thanks to Lemma \ref{lem: RCD(0,N)}, there exists a full-support nonnegative Radon measure $\hat{\meas}$ on $(\R^n,\hat{d})$ such that $(\R^n,\hat{d},\hat{\meas})$ is an $\RCD(0,N)$ space.

We will proceed by induction on $k$ and prove that, for $1\le k\le n_1$, there exists an isomorphism of pointed m.m.s. $\phi_k\colon(\R^k,0,d_E,\mathcal{L})\otimes (Z_k,d_{Z_k},\meas_k,z_k)\to(\R^n,\hat{\dist},\hat{\meas},0)$ such that, for all $t\in\R$ and $1\le i \le k$, we have $\phi_k(te_i,z_k)=\hat{\gamma}_i(t)$, where $(Z_k,d_{Z_k},\meas_k)$ is a full-support $\RCD(0,N-k)$ space.

By Proposition \ref{prop: normal geodesic blow-up}, the curves $\hat{\gamma}_i$ are lines in $(\R^n,\hat{\dist})$; thus, thanks to the splitting theorem \cite{Gigli14}, our induction hypothesis holds for $k=1$.

Now, assume that our induction hypothesis holds for some $1\le k<n_1$ and let us construct $\phi_{k+1}$. The initial tangent vectors of $\hat{\gamma}_{k+1}$ and $\hat{\gamma}_{i}$ has angle $\pi/2$ between them. Thanks to Lemma \ref{lem: angle estimate}, for all $t\in\R$, we have $\hat{\dist}(\hat{\gamma}_{k+1}(t),\hat{\gamma}_i(\pm t))\ge\sqrt{2}\lvert t\rvert$. Therefore, denoting $\hat{\gamma}_{k+1}=\phi(\beta_{\R}^1,\cdots,\beta_{\R}^k,\beta_Z)$, we have $\beta_{\R}^1=\cdots=\beta_{\R}^k=0$ as a result of Lemma \ref{lem: splitting angle}. In particular, thanks to the splitting theorem \cite{Gigli14}, there exists an isomorphism of pointed m.m.s. 
$$\psi\colon (\R,0,d_E,\mathcal{L})\otimes (Z_{k+1},d_{Z_{k+1}},\meas_{k+1},z_{k+1})\to (Z_k,d_{Z_k},\meas_k,z_k)$$
such that $\psi(t,z_{k+1})=\beta_Z(t)$ and $(Z_{k+1},d_{Z_{k+1}},\meas_{k+1})$ is a full-support $\RCD(0,N-k-1)$ space. Therefore, the isomorphism:
\begin{equation*}
    \phi_{k+1}((t_1,\cdots,t_{k+1}),z')\coloneqq \phi_k((t_1,\cdots,t_k),\psi(t_{k+1},z')),
\end{equation*}
satisfies the inductive step, which concludes the proof.
\end{proof}

\subsection{Proof of universal non-CD}\label{subsec: proof of A}

We use Proposition \ref{prop: splitting} and Theorem \ref{thm:geodesic blow-up} to complete the proof of Theorem \ref{mainthm:not_CD}.

\begin{proof}[Proof of Theorem \ref{mainthm:not_CD}]

Looking for a contradiction, let us assume $n_1\coloneqq n_1(p)<n\coloneqq\dim M$ for some $p\in M$. Let us fix an isometry $\phi\colon(\R^{n_1},0)\times(Z,z)\to(\R^n,\hat{\dist},0)$ as in Proposition \ref{prop: splitting}, where $(\R^n,\hat{\dist})$ is the nilpotent approximation of $M$ at $p$.

    If $Z$ consisted of a single point, then $\phi$ would induce a homeomorphism between $\R^{n_1}$ and $\R^n$, which would imply $n=n_1$, a contradiction. Since $Z$ is a geodesic space and consists of at least two points, we may fix a non-constant unit-speed geodesic $\gamma\colon[0,\epsilon]\to Z$ such that $\gamma(0)=z$. We will identify $\gamma$ with $\phi(0,\gamma)$.

    Noting that the nilpotent approximation of $(\R^n,\hat{d})$ at $0$ is itself, we apply Theorem \ref{thm:geodesic blow-up} to $\gamma$ in $(\R^n,\hat{d})$. Then there exists a sequence $\lambda_k\to\infty$ such that $\gamma^{\lambda_k}$ converges locally uniformly on $\R_{\ge0}$ to a ray $\gamma_{\infty}$ with constant control in $(\R^n,\hat{\dist})$ as $k\to \infty$. In particular, $\gamma_{\infty}\colon\R_{\ge0}\to(\R^n,\hat{\dist})$ is a smooth unit-speed ray satisfying:
    \begin{equation*}
     \begin{cases*}
         \gamma_{\infty}(0)=0\\
         \dot{\gamma}_{\infty}(t)=\sum_{i=1}^mu_i\hat{X}_{i}(\gamma_{\infty}(t))
     \end{cases*},
    \end{equation*}
    where $u=(u_1,\cdots,u_m)\in\R^m$ is a constant control.

    Let us denote $v\coloneqq\sum_{i=1}^mu_i\hat{X}_i(0)\in \hat{\D}_0$. Thanks to Corollary \ref{lem: normal_line}, there is a normal geodesic ${\gamma}_v$ in $(\mathbb{R}^n,\hat{d})$ with ${\gamma}_v(0)=0$ and $\dot{\gamma}_v(0)=v$ such that ${\gamma}_v$ is a line and is invariant under blow-ups. 
    

    We claim that ${\gamma}_v$ is contained in $\mathbb{R}^{n_1}\times \{z\}$. We fix an orthonormal basis $\{e_1,\cdots,e_{n_1}\}$ of $\hat{\D}_0$. For each $i=1,...,n_1$, we denote $\theta_i$ the angle between $v$ and $e_i$, and apply Corollary \ref{lem: normal_line} to obtain a normal line $\beta_i$ in $(\R^n,\hat{d})$ that fulfills the conclusion of Corollary \ref{lem: normal_line}. Since both $\gamma_v$ and $\beta_i$ are normal geodesics and are invariant under blow-ups of $(\mathbb{R}^n,\hat{d})$, by Lemma \ref{lem: angle estimate}, we have the distance estimate
    $$\hat{d}(\gamma_v(\pm t),\beta_i(t))\ge |t|\sqrt{2\mp 2\cos\theta}.$$
    Thanks to Lemma \ref{lem: splitting angle}, we can write $\gamma_v$ as
    $${\gamma}_v(t)=\phi(t\cos(\theta_1),\dots, t\cos(\theta_{n_1}),\gamma_Z(t)).$$ 
    Because ${\gamma}_v$ is a line in $(\R^n,\hat{\dist})$, we have:
    \begin{equation}
        t^2 =\sum_{i=1}^{n_1} t^2 \cos^2(\theta_i) + d_Z^2(\gamma_Z(t),z) \ge t^2 \sum_{i=1}^{n_1} \cos^2(\theta_i) = t^2,
    \end{equation}
    where the last equality holds because $v\in \hat{\D}_0$ is a unit vector and $\{e_1,\cdots,e_{n_1}\}$ is an orthonormal basis of $\hat{\D}_0$. This shows that $\gamma_Z(t)\equiv z$ is constant, and thus verifies the claim that ${\gamma}_v$ is contained in $\mathbb{R}^{n_1}\times \{z\}$.

    Now, we have $\gamma\subset \{0\}\times Z$ and ${\gamma}_v\subset\R^{n_1}\times\{z\}$, two unit speed geodesics emanating at $(0,z)$. Due to the metric product $\R^{n_1}\times Z$, we obtain
    \begin{equation}\label{eq: distance between gamma and gamma_v}
        \forall t\in[0,\epsilon],\ \hat{\dist}({\gamma}_v(-t),\gamma(t))=\sqrt{2}t.
    \end{equation}
    Moreover, $\gamma_v$ is invariant under blow-ups by its construction from Corollary \ref{lem: normal_line}. Therefore, for every $t\in\R_{\ge0}$, the following holds:
    \begin{equation*}
        \hat{\dist}(\gamma_v(-t),\gamma_{\infty}(t))=\lim_{k\to\infty}\hat{\dist}(\gamma_v^{\lambda_k}(-t),\gamma^{\lambda_k}(t))=\lim_{k\to\infty}\lambda_k\hat{\dist}(\gamma_v(-t/\lambda_k),\gamma(t/\lambda_k))=\sqrt{2}t,
    \end{equation*}
    using \eqref{eq: distance between gamma and gamma_v}. On the other hand, by construction we have:
    \begin{equation*}
        \dot{\gamma}_{v}(0)=v=\sum_{i=1}^mu_i\hat{X}_i(0)=\dot{\gamma}_{\infty}(0).
    \end{equation*} 
    Thanks to Corollary \ref{cor: distance between smooth admissible curves}, the following holds for all $t\ge0$:
    \begin{equation}\label{eq: contradiction}
        \sqrt{2}t=\hat{\dist}(\gamma_v(-t),\gamma_{\infty}(t))\ge2t+o_{t\to0^+}(t),
    \end{equation}
    using $\dot{\gamma}_{v}(0)=v=\dot{\gamma}_{\infty}(0)$ and $\lvert v\rvert_0=1$. Letting $t\to0^+$ in \eqref{eq: contradiction} gives $\sqrt{2}\ge2$, which is the contradiction we were looking for, hence concluding the proof. 
\end{proof}

Next, we prove Corollary \ref{cor: subFinsler_not_RCD}. We refer the reader to \cite{Magnabosco_Rossi_Review_2025} for an introduction to smooth sub-Finsler manifolds and to \cite{A_LD_NG_23} for a proof of an analogue of Theorem \ref{thm: uniform convergence of distances} in the general case of Lipschitz sub-Finsler manifolds.

\begin{proof}[Proof of Corollary \ref{cor: subFinsler_not_RCD}]
Let $d_{sF}$ be the distance on $M^n$ induced by a sub-Finsler structure $(\xi,\lvert\cdot\rvert)$ (see \cite[Section 2]{Magnabosco_Rossi_Review_2025} for the notations) and assume that $(M,d_{sF},\meas)$ is an $\RCD(K,N)$ space for some nonnegative full-support Radon measure $\meas$, $K\in\R$, and $N\in(1,\infty)$.

First, observe that the set $\mathcal{R}\subset M$ of points $p\in M$ which are equiregular (i.e., the flag dimensions $n_i$ are locally constant near $p$) and weakly regular (i.e., there exists a metric measure tangent cone at $p$ which is isomorphic to the Euclidean space) is dense in $M$. Indeed, the set of equiregular points is open and dense in $M$ (see the third bullet point following \cite[Example 2.6]{jean_control_2014}). Moreover, the set of weakly regular points has full measure (see \cite[Theorem 1.1]{gigli_euclidean_2015}); hence, it is dense. In particular, $\mathcal{R}$ is a dense subset of $M$ as the intersection of a dense open subset with a dense subset.

Let us fix $p\in \mathcal{R}$. Thanks to \cite[Theorem 1.5]{A_LD_NG_23}, the tangent cone at $p$ is a (unique) sub-Finsler Carnot group $(\R^n,\hat{d}_p)$, where $\hat{d}_p$ is induced by the nilpotent approximation $(\hat{\xi}^p,\lvert\cdot\rvert_p)$ on $\R^n$ (see \cite[Definition 3.3]{Magnabosco_Rossi_Review_2025}). Since $p$ is weakly regular, its (unique) metric tangent cone $(\R^n,\hat{d}_p)$ is isometric to a Euclidean space and thus has Hausdorff dimension $n$; as a consequence, $(\R^n,\hat{d}_p)$ is Finsler (see \cite[Corollary 4.3.6]{LeD_book}). Since $(\R^n,\hat{d}_p)$ is isometric to a Euclidean space, and since isometries between Finsler manifolds are Finsler isometries (see \cite[Theorem 10]{aradi_isometries_2014}), the nilpotent approximation at $p$ is sub-Riemannian. Therefore, $\lvert\cdot\rvert_p$ satisfies the parallelogram identity whenever $p\in \mathcal{R}$. 

Since $\mathcal{R}\subset M$ is dense and $\lvert\cdot\rvert$ is continuous, the norm $\lvert\cdot\rvert_p$ satisfies the parallelogram identity for every $p\in M$. Hence, $(M,d_{sF})$ is sub-Riemannian and we conclude using Theorem \ref{mainthm:not_CD}.
\end{proof}

\section{New examples of RCD structures on $\mathbb{R}^n$}\label{sec:exmp}

We prove Theorem \ref{mainthm:exmp} in this section.

We first define a distance $d$ on $\mathbb{R}^n$ as follows. Let 
$$C=\{(0,y)\in \mathbb{R}^{k+1} \times \mathbb{R}\ |\ y\in\mathbb{R}\}$$
be a curve in $\mathbb{R}^{k+2}$, where $k\ge 2$. On the complement of $C$, that is, 
$$\Omega:=\mathbb{R}^{k+2}-C=\{(x,y)\in \mathbb{R}^{k+1}\times \mathbb{R}\ | \ x\not= 0 \},$$
let us consider an incomplete doubly-warped Riemannian metric defined by $(0,\infty)\times_{cr} S^k \times_{r^{-\alpha}} \mathbb{R}$:
$$dr^2 + (cr)^2 ds_k^2 + r^{-2\alpha} dy^2,$$
where $\alpha>0$, $c\in(0,1)$, and $ds_k^2$ denotes the standard round metric on $S^k$. Here, we identify $\mathbb{R}^{k+1}$ with a cone over $S^k$. We write $n=k+2$ and denote $(\mathbb{R}^n,\dist)$ the metric completion of $(\Omega,\dist_g)$. 

We call the above defined $(\mathbb{R}^n,d)$ a \textit{cone-Grushin space} since it is the doubly-warped product of a cone metric (with warping function $cr$) and a Grushin metric (with warping function $r^{-\alpha}$).

Below we construct the cone-Grushin space, where $c\in(0,1)$ is sufficiently small, as the (unique) asymptotic cone of a complete manifold with positive Ricci curvature. In particular, this shows that the cone-Grushin space is an RCD$(0,N)$ space for some $N<\infty$.

On $\mathbb{R}^{m+1} \times S^k \times S^1$, we consider a triply-warped product
$$M=[0,\infty)\times_f S^m \times_g S^k \times_h S^1, \quad dr^2+f(r)^2ds_m^2 +g(r)^2ds_k^2 + h(r)^2 ds_1^2.$$
We use warping functions
$$f(r)=\dfrac{r}{(1+r^2)^{1/4}},\quad g(r)=\dfrac{\pi}{2}\cdot\dfrac{cr}{\arctan r},\quad h(r)=(1+r^2)^{-\alpha/2},$$
where $c\in(0,1)$ is a small constant to be determined later. We remark that the warping functions $f$ and $h$ were also used in \cite{Pan-Wei_2022,Wei_1988}. We note that $f,g,h$ satisfy the following properties:
$$f(0)=0, \quad f^{(\text{even})}(0)=0,\quad 0<f'<1,\quad f''\le 0;$$
$$ g(0)>0,\quad g^{(\text{odd})}(0)=0, \quad \lim\limits_{r\to\infty}r^{-1} g(r)=c,\quad 0\le g'<c; $$
$$ h(0)>0,\quad h^{(\text{odd})}(0)=0,\quad h'<0.$$
They define a smooth Riemannian metric on $\mathbb{R}^{m+1} \times S^k \times S^1$.

\begin{lem}
For each $k\ge 2$ and $\alpha>0$, we can choose suitable $m\ge 2$ and $c\in (0,1)$ such that $M$ has positive Ricci curvature. 
\end{lem}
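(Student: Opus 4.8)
The plan is to compute the Ricci tensor of the triply-warped product $M$, observe that it is block-diagonal so that $\Ric>0$ amounts to four scalar inequalities in the radial variable $r$, and then satisfy these inequalities by first choosing $m$ large (depending only on $k,\alpha$) and afterwards $c$ small (depending on the fixed $m$).

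\emph{Step 1: the Ricci tensor.} Setting $d_1=m,d_2=k,d_3=1$ and $\phi_1=f,\phi_2=g,\phi_3=h$ for the three round sphere factors, the standard formulas for a multiply-warped product give that, in the orthonormal frame made of $\partial_r$ and unit vectors $E^{(j)}$ tangent to the $j$-th factor, the Ricci tensor is diagonal with
\[
\Ric(\partial_r,\partial_r)=-\sum_{j=1}^{3}d_j\frac{\phi_j''}{\phi_j},\qquad
\Ric(E^{(j)},E^{(j)})=\frac{(d_j-1)\bigl(1-(\phi_j')^2\bigr)}{\phi_j^2}-\frac{\phi_j''}{\phi_j}-\sum_{i\ne j}d_i\frac{\phi_i'\phi_j'}{\phi_i\phi_j}.
\]
These expressions extend continuously to $r=0$ (where the metric is smooth by hypothesis; the limits of the $\partial_r$- and $E^{(1)}$-expressions coincide, consistently with the collapse of the $S^m$-factor), so $M$ has $\Ric>0$ iff all four functions of $r\in[0,\infty)$ are everywhere positive. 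I would check them one factor at a time for the given $f,g,h$.

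\emph{Step 2: radial, $S^m$ and $S^1$ directions via large $m$.} The key structural point is that $c$ enters only through $g=\tfrac{\pi c}{2}\tilde g$ with $\tilde g(r)=r/\arctan r$ independent of $c$, so $g'/g$ and $g''/g$ are $c$-free while $1/g^2=\tfrac{4}{\pi^2c^2\tilde g^2}$ is $c^{-2}$ times a fixed positive function. Using $f''\le0$, $0<f'<1$, $g'\ge0$, $h'<0$ one sees that $\Ric(\partial_r,\partial_r)$, $\Ric(E^{(1)},E^{(1)})$, $\Ric(E^{(3)},E^{(3)})$ each take the form $m\,p(r)+q(r)$ (respectively $(m-1)p(r)+q(r)$ for $E^{(1)}$) with $p>0$ on $[0,\infty)$ and with $q$ independent of both $m$ and $c$; here $p$ equals $-f''/f$, $\tfrac{1-(f')^2}{f^2}$, $-f'h'/(fh)$ respectively, all strictly positive (as continuous extensions at $r=0$). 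Such a quantity is positive everywhere once $m$ (resp. $m-1$) exceeds $\sup\{|q(r)|/p(r):q(r)<0\}$, and this supremum is finite because $|q|/p$ is continuous on $[0,\infty)$ with a finite limit as $r\to\infty$ — the limits work out to $4\alpha(\alpha+1)$, $0$, $2|k-\alpha-1|$. Hence there is $m_0=m_0(k,\alpha)$ so that every integer $m\ge\max(m_0,2)$ makes these three directions positive, \emph{with a threshold independent of $c$}.

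\emph{Step 3: the $S^k$ direction via small $c$, and the main obstacle.} Here lies the difficulty: in $\Ric(E^{(2)},E^{(2)})=\tfrac{(k-1)(1-(g')^2)}{g^2}-\tfrac{g''}{g}-m\tfrac{f'g'}{fg}-\tfrac{g'h'}{gh}$ the cross term $-m\,f'g'/(fg)$ is $\le0$ since $g'\ge0$, so the large-$m$ mechanism of Step 2 actively works against this direction — the concave factor $S^m$ helps three directions but hurts this one, which is why $m$ and $c$ must be tuned in the right order. Instead, fix $m$ as in Step 2 and exploit the $c^{-2}$ growth of the leading term. On $[0,1]$ one has $g\le 2c$, so the leading term is $\ge\tfrac{(k-1)(1-c^2)}{4c^2}$, while $-g''/g$ and $-m\,f'g'/(fg)$ are bounded there by a constant depending only on $m$ (and absolute data) and $-g'h'/(gh)\ge0$; this is positive for $c$ small. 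On $[1,\infty)$, multiply through by $r^2$ and use $r^2/g^2=\tfrac{4}{\pi^2c^2}(\arctan r)^2\ge\tfrac{1}{4c^2}$, so the leading term contributes $\ge\tfrac{k-1}{8c^2}$ to $r^2\Ric(E^{(2)},E^{(2)})$ whereas $r^2$ times each remaining term stays bounded uniformly in $r\ge1$ by a constant depending only on $k,\alpha,m$ (the relevant quotients have finite limits at $r=\infty$). Choosing $c=c(k,\alpha,m)\in(0,1)$ small enough to win on $[0,1]$ and on $[1,\infty)$ simultaneously then yields $\Ric(E^{(2)},E^{(2)})>0$ everywhere, and the lemma follows. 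The crux is precisely keeping the $m$-thresholds of Step 2 $c$-free, so that $m$ may be fixed before $c$; everything else is the routine but somewhat lengthy bookkeeping of checking that the ratios $|q|/p$ and the Step 3 remainder terms are bounded, which reduces to continuity on $[0,\infty)$ together with the asymptotics of $f,g,h$ and their derivatives at $r=0$ and $r=\infty$.
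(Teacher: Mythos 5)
Your proposal is correct and follows essentially the same route as the paper: the same block-diagonal Ricci formulas for the triply-warped product, the same observation that $c$ enters only the $S^k$-direction (through $1/g^2$, while $g'/g$ and $g''/g$ are $c$-free) so that $m$ can be fixed first with $c$-independent thresholds, and the same $[0,1]$/$[1,\infty)$ split with the $c^{-2}$ leading term to finish. The only difference is cosmetic: you justify the choice of $m$ by continuity plus finite limits of $|q|/p$ at infinity, where the paper writes out the explicit threshold $m>\max\{k+4\alpha(\alpha+1),\,k+1,\,2(\alpha+1)\}$.
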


\begin{proof} Let $X,Y$, and $Z$ be a unit vector tangent to the components $S^m$, $S^k$, and $S^1$, respectively, where $k\ge 2$ is fixed and $m\ge 2$ will be determined later. Then, by direction calculation, we have the Ricci curvature (see \cite[Section 4.2.4]{Pet_book})
\begin{align*}
    \Ric(\partial_r,\partial_r)&=-m\dfrac{f''}{f}-k\dfrac{g''}{g}-\dfrac{h''}{h},\\
    \Ric(X,X)&= -\dfrac{f''}{f}+(m-1)\dfrac{1-(f')^2}{f^2}-k\dfrac{f'g'}{fg}-\dfrac{f'h'}{fh},\\
    \Ric(Y,Y)&= -\dfrac{g''}{g}-m\dfrac{f'g'}{fg}+(k-1)\dfrac{1-(g')^2}{g^2}-\dfrac{g'h'}{gh},\\
    \Ric(Z,Z)&= -\dfrac{h''}{h}-m\dfrac{f'h'}{fh}-k\dfrac{g'h'}{gh}.
\end{align*}
We note that the value of $c\in (0,1)$ is only involved in the computation of $\Ric(Y,Y)$. By direct computation, we have
$$\dfrac{f''}{f}=-\dfrac{x^2+6}{4(x^2+1)^2},\quad \dfrac{h''}{h}=\alpha \dfrac{(\alpha+1)x^2-1}{(x^2+1)^2},\quad \dfrac{f'h'}{fh}=-\alpha \dfrac{x^2+2}{2(x^2+1)^2};$$
$$-\dfrac{f''}{f}-\dfrac{g''}{g}>0,\quad \dfrac{1-(f')^2}{f^2}-\dfrac{f'g'}{fg}>0.$$
To ensure that $\Ric(\partial_r,\partial_r)$, $\Ric(X,X)$, and $\Ric(Z,Z)$ are positive, we can choose $m$ large such that $m>\max\{ k+4\alpha(\alpha+1),k+1,2(\alpha+1) \}$. Next, we pick $c\in (0,1)$ to obtain $\Ric(Y,Y)>0$. Since
$$-\dfrac{f'g'}{fg}<-\dfrac{g''}{g}<0,\quad -\dfrac{g'h'}{gh}>0,$$
it suffices to find $c\in (0,1)$ such that
\begin{equation}\label{eq: Ric_Y}
-(m+1)\dfrac{f'g'}{fg}+(k-1)\dfrac{1-(g')^2}{g^2}>0.
\end{equation}
We calculate
$$\dfrac{f'}{f}=\dfrac{r^2+2}{2r(r^2+1)}<\dfrac{1}{r},\quad \dfrac{g'}{g} = \dfrac{(r^2+1)\arctan r-r}{r(r^2+1)\arctan r},\quad \dfrac{1}{g}=\dfrac{2}{\pi}\cdot \dfrac{\arctan r}{cr}.$$
On $[0,1]$, using $\arctan r\le r$, we estimate
$$\dfrac{g'}{g}\le \dfrac{r^2}{(r^2+1)\arctan r};$$
on $[1,\infty]$, we have
$$\dfrac{g'}{g}\le \dfrac{1}{r}.$$
From (\ref{eq: Ric_Y}), if we choose $c\in (0,1)$ small such that
$$  
(k-1)\cdot\dfrac{4}{c^2\pi^2}\cdot \dfrac{\arctan^2 r}{r^2} > \begin{cases} \dfrac{(m+1)r}{(r^2+1)\arctan r} + \dfrac{(k-1)r^4}{(r^2+1)^2\arctan^2 r} &\text{ when } r\in [0,1]\\
  \dfrac{m+1}{r^2}+\dfrac{k-1}{r^2}
&\text{ when } r\in [1,\infty]
\end{cases},$$
then $\Ric(Y,Y)>0$. This completes the proof.
\end{proof}

\begin{prop}
  Let $\widetilde{M}$ be the Riemannian universal cover of the above-defined $M$. Then $\widetilde{M}$ has a unique asymptotic cone as the cone-Grushin space $(\mathbb{R}^{k+2},d)$.
\end{prop}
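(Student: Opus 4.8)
The plan is to show that the Riemannian universal cover $\widetilde M$ of $M = [0,\infty)\times_f S^m\times_g S^k\times_h S^1$ has, as $r\to\infty$, the same large-scale geometry as the cone-Grushin space, and to extract a \emph{unique} asymptotic cone from the warping-function asymptotics together with nonnegative Ricci curvature. Since $M$ has positive Ricci curvature by the preceding lemma, so does $\widetilde M$; in particular $\widetilde M$ is an $\RCD(0,N)$ space with $N = m+k+2$, and by Gromov precompactness any asymptotic cone exists. The first step is to understand $\widetilde M$ topologically and metrically: the only factor with nontrivial $\pi_1$ is $S^1$, so $\widetilde M$ is the triply-warped product $[0,\infty)\times_f S^m\times_g S^k\times_h\mathbb R$, i.e. $\widetilde M = \mathbb R^{m+1}\times_{\text{(cone part)}} S^k\times \mathbb R$ with metric $dr^2 + f(r)^2 ds_m^2 + g(r)^2 ds_k^2 + h(r)^2 dy^2$ on $(0,\infty)\times S^m\times S^k\times\mathbb R$, where $y\in\mathbb R$ is the lift of the $S^1$-coordinate. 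The base point for the asymptotic cone is the origin $r=0$.

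Next I would identify the rescaled limit. Rescaling the metric by $\lambda^{-2}$ and letting $\lambda\to\infty$ amounts to replacing $r$ by $\lambda r$ and dividing distances by $\lambda$; concretely, set $\rho = r/\lambda$ and examine the rescaled metric
$$d\rho^2 + \lambda^{-2}f(\lambda\rho)^2 ds_m^2 + \lambda^{-2}g(\lambda\rho)^2 ds_k^2 + \lambda^{-2}h(\lambda\rho)^2 dy^2.$$
From the stated asymptotics $f(r)\sim r^{1/2}$, so $\lambda^{-2}f(\lambda\rho)^2 \sim \lambda^{-1}\rho \to 0$: the $S^m$-factor \emph{collapses} to a point. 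Meanwhile $\lambda^{-1}g(\lambda\rho)\to c\rho$ since $\lim_{r\to\infty}r^{-1}g(r)=c$, giving the cone warping $(c\rho)^2 ds_k^2$; and $\lambda^{-2}h(\lambda\rho)^2 dy^2$ — after rescaling $y$ as well, $\tilde y = \lambda^{-(1+\alpha)}y$, to keep it finite — produces $\rho^{-2\alpha}d\tilde y^2$ because $h(r)\sim r^{-\alpha}$. Thus on the open dense set $\{\rho>0\}$ the pointed rescaled spaces converge to exactly $(0,\infty)\times_{c\rho}S^k\times_{\rho^{-\alpha}}\mathbb R$, whose metric completion is the cone-Grushin space $(\mathbb R^{k+2},d)$ with $k+2 = n$. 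To make the convergence of \emph{distances} (not just the metric tensor) rigorous I would use the explicit structure: distances in a warped product base $[0,\infty)\times_f N$ are controlled by the radial coordinate plus the warped fiber distance, so one gets locally uniform convergence of $d_{\widetilde M,\lambda}$ to $d$ on $\{\rho>0\}^2$ by dominated-convergence-type estimates on length of curves, and then extends across the singular curve $C=\{\rho=0\}$ by density and completeness (any point of $C$ is a limit of points with $\rho>0$, and the distance is continuous).

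The uniqueness of the asymptotic cone is the step I expect to require the most care. Gromov precompactness only gives subsequential limits a priori; to upgrade to a unique limit I would argue that \emph{every} subsequence $\lambda_k\to\infty$ produces the same limit, namely $(\mathbb R^n,d)$, because the computation above used only the limits $\lim r^{-1}g = c$, $\lim r^{-1/2}f$ exists and is positive, $\lim r^{\alpha}h$ exists and is positive — these are genuine limits, not merely limsup/liminf, so no choice of subsequence is involved. One clean way to package this: show directly that $(\widetilde M,\lambda d_{\widetilde M},\tilde p)$ Gromov–Hausdorff converges (full limit) to $(\mathbb R^n,d,o)$ as $\lambda\to\infty$ by exhibiting explicit $\varepsilon_\lambda$-GH approximations built from the maps $(r,\theta_m,\theta_k,y)\mapsto(\rho,\theta_k,\tilde y)$ which forget the collapsing $S^m$-factor, and estimating the metric distortion using the warping asymptotics uniformly on balls of fixed radius. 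The collapse of $S^m$ needs the bound $f'<1$ (so diam of the $S^m$-fiber at radius $r$ is $O(f(r)) = O(r^{1/2})$, negligible after rescaling by $\lambda^{-1}$) — this is where the specific choice of $f$ matters. I would also invoke stability of $\RCD(0,N)$ under pmGH convergence to record that the limit is $\RCD(0,N)$ with the limit of the renormalized volume measures, though for the \emph{uniqueness of the metric space} only the distance estimates are needed. Combining: $\widetilde M$ has nonnegative (indeed positive) Ricci, its asymptotic cone exists, is unique by the above, and equals the cone-Grushin space $(\mathbb R^{k+2},d)$; since it arises as a pmGH limit of manifolds with $\Ric\ge 0$ of dimension $m+k+2$, it is $\RCD(0,m+k+2)$, establishing the claim (and, en route, the $\RCD$ assertion of Theorem \ref{mainthm:exmp}).
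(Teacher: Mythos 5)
Your proposal is correct and follows essentially the same route as the paper: pass to the universal cover (which replaces $S^1$ by $\mathbb{R}$), rescale via $t=\lambda^{-1}r$ and $w=\lambda^{-1-\alpha}v$, and observe that the rescaled warping functions satisfy $f_\lambda\to 0$ (collapsing the $S^m$-factor), $g_\lambda\to ct$, and $h_\lambda\to t^{-\alpha}$ uniformly on compact subsets of $(0,\infty)$, yielding the cone-Grushin metric as the full (hence unique) limit. The paper's proof is terser—it records the exact rescaled metric and the locally uniform convergence of the warping functions and stops there—whereas you spell out the GH-approximation and uniqueness bookkeeping, but the underlying argument is the same.
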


\begin{proof}
It follows from the construction of $M$ that $\widetilde{M}$ has a Riemannian metric as a triply-warped product
$$[0,\infty)\times_{f} S^m \times_g S^k \times_h \mathbb{R},\quad dr^2+f(r)^2ds_m^2 +g(r)^2ds_k^2 + h(r)^2 dv^2.$$
$\widetilde{M}$ is diffeomorphic to $\R^{m+1}\times S^k \times \R$.

For any $\lambda>1$, we apply a change of variables $t=\lambda^{-1} r$ and $w=\lambda^{-1-\alpha} v$, then 
\begin{align*}
\lambda^{-2}\tilde{g}&=\lambda^{-2}[dr^2+f^2(r)ds_m^2 +g^2(r)ds_k^2 + h(r)^2 dv^2]\\
&=dt^2 + \dfrac{t^2}{(1+\lambda^2 t^2)^{1/2}}ds_m^2 + \dfrac{\pi^2}{4}\cdot \dfrac{c^2t^2}{\arctan^2 (\lambda t)}ds_k^2 + \dfrac{\lambda^{2\alpha}}{(1+\lambda^2t^2)^{\alpha}} dw^2\\
&=dt^2 + f_\lambda(t)^2 ds_m^2 + g_\lambda(t)^2 ds_k^2 + h_\lambda(t)^2 dw^2.
\end{align*}
As $\lambda\to\infty$, we have
$$f_\lambda \to 0,\quad g_\lambda\to ct,\quad h_\lambda \to t^{-\alpha}$$
converge uniformly on every compact subset of $(0,\infty)$. Hence, the Gromov-Hausdorff limit space of $\lambda^{-1}\widetilde{M}$ as $\lambda \to \infty$ is the cone-Grushin space with metric
$$dt^2+(ct)^2 ds_k^2 + t^{-2\alpha} dw^2.$$
\end{proof}

\begin{proof}[Proof of Theorem \ref{mainthm:exmp}]
(1) We have shown that the cone-Grushin space $(\mathbb{R}^n,d)$ is the (unique) asymptotic cone of some complete Riemannian manifold of positive Ricci curvature and dimension $N=m+k+2$. Endowed with a limit renormalized measure $\meas$, the metric measure space $(\mathbb{R}^n,d,\meas)$ is an RCD$(0,N)$ space.

(2) It is clear that $\Delta_q=T_q \R^n$ for any point $q\in \R^n-C$. At every singular point $(0,y)\in C$, we have horizontal directions
$$\Delta_{(0,y)} = \mathrm{span}\{ \partial_{x_1},...,\partial_{x_{k+1}} \},$$ 
where $(x_1,...,x_{k+1})$ is the standard coordinate of $x\in \mathbb{R}^{k+1}$. To see that $\Delta$ coincides with a distribution that is generated by a finite family of smooth vector fields with the H\"ormander condition, one may use
$$X_1=\partial_{x_1},\ ...,\  X_{k+1}=\partial_{x_{k+1}},\  X_{k+2}=r^2 \partial_y, $$
where $r^2=\sum_{i=1}^{k+1} x_i^2$.

(3) We construct metric dilations on $(\mathbb{R}^n,d)$. For every $\lambda>0$, we define
$$\delta_\lambda:\R^{k+2} \to \R^{k+2},\quad (x,y)\to (\lambda x, \lambda^{1+\alpha} y),$$
where $x\in \R^{k+1}$ and $y\in \R$. One can verify that the pullback metric satisfies
$$\delta_\lambda^*(dr^2 + (cr)^2 ds_k^2 + r^{-2\alpha} dy^2)=\lambda^2(dr^2 + (cr)^2 ds_k^2 + r^{-2\alpha} dy^2).$$
Hence $\delta_\lambda$ is a metric dilation with factor $\lambda$, that is,
$$d(\delta_\lambda(x,y),\delta_\lambda(x',y'))=\lambda\cdot d((x,y),(x',y'))$$
for all $(x,y),(x',y')\in \mathbb{R}^{k+2}$.

(4) We note that the metric space $(\mathbb{R}^n,d)$ has symmetries as translations and reflections in the $y$-direction. We set $c=d((0,0),(0,1))>0$. For any point $(0,y)\in C$ with $y\not=0$, we can use the metric dilation $\delta_\lambda$, where $\lambda=|y|^{\frac{1}{1+\alpha}}$, to compute
$$d((0,0),(0,y))=d((0,0),(0,|y|))=d(\delta_\lambda(0,0),\delta_\lambda(0,\pm 1))=c\cdot |y|^{\frac{1}{1+\alpha}}.$$
Consequently, $(C,d)$ has Hausdorff dimension $1+\alpha$.
\end{proof}

\begin{rem}
   We showed that the cone-Grushin space is a Ricci limit space. Alternatively, one may directly work with $(\mathbb{R}^{n},d)$ and equip it with a measure
   $$\meas = r^p \dvol_g,$$ 
   where $p>1$ large and $\dvol_g$ is the Riemannian volume from the cone-Grushin metric. One can  compute that the $N$-Barky-{\'E}mery curvature on $\Omega$ is positive for small $c\in (0,1)$ and large $p,N$. However, it is unclear to the authors whether $\Omega$ is always geodesically convex in $(\mathbb{R}^n,d)$. When $\Omega$ is geodesically convex, one can apply the argument in \cite[Section 3.5]{Rizzi-Stefano_2023} to conclude that $(\mathbb{R}^n,d,\meas)$ is RCD$(0,N)$; also see \cite[Theorem 4.1]{Borza_Tashiro_24}.
\end{rem}

\appendix

\section{Blow-up of geodesics in sub-Riemannian manifolds}\label{sec:appendix}
We prove Proposition \ref{prop: normal geodesic blow-up} and Theorem \ref{thm:geodesic blow-up} in this appendix. In Appendix \ref{subsec: smooth minimal control}, we show that the minimal control of a normal geodesic is smooth, which will be used in the subsequent subsection. After studying blow-ups of normal geodesics in Appendix \ref{subsec: normal blowup}, we focus on Theorem \ref{thm:geodesic blow-up} (proved by Monti--Pigati--Vittone in \cite{Monti18} for constant-rank distributions). To extend their result to the rank-varying case, we use a desingularization that lifts the nilpotent approximation to a Carnot group, which is described in \cite[Section 5.4]{Bellaiche_96}; for readers' convenience, we conclude the details in Appendix \ref{subsec: Carnot lift}. We will then be able to conclude the proof of Theorem \ref{thm:geodesic blow-up} in Appendix \ref{subsec: abnormal blowup} thanks to the following theorem by Monti--Pigati--Vittone (see \cite[Proof of Theorem 1.1]{Monti18}).

\begin{thm}\label{thm:Carnot blow-up}\cite{Monti18}
Let $G$ be a Carnot group and denote $\{\delta_{\lambda}\}_{\lambda>0}$ its family of dilations. If $\gamma\colon I\to G$ is length-minimizing and satisfies $0\in I$ and $\gamma(0)=e$ (where $e$ denotes the identity element of $G$), then there exists a sequence $\lambda_k\to\infty$ such that $\gamma^{\lambda_k}$ (see \eqref{eq:rescaled curve}) converges locally uniformly to a length-minimizing curve $\hat{\gamma}$ with constant control.
\end{thm}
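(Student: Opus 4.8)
The plan is to split the proof into a soft compactness step, which exhibits \emph{some} blow-up limit as a length-minimizer, and a hard structural step, in which minimality is used to choose a blow-up sequence whose limit has constant control.

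For the soft step I would exploit that $G$, being a Carnot group, is genuinely self-similar: its left-invariant Carnot--Carath\'eodory distance $d_{CC}$ satisfies $d_{CC}(\delta_\lambda x,\delta_\lambda y)=\lambda\,d_{CC}(x,y)$. Hence, if $\gamma\colon[0,T]\to G$ is a unit-speed length-minimizer with $\gamma(0)=e$, then every rescaled curve $\gamma^\lambda$ (see \eqref{eq:rescaled curve}) is again a unit-speed length-minimizer through $e$, and a short computation --- using that the generators $X_i$ lie in the first layer, so that $(\delta_\lambda)_*X_i=\lambda X_i$ --- shows that the minimal control of $\gamma^\lambda$ is the time-dilated control $u^\lambda(t)=u(t/\lambda)$; in particular $\lvert u^\lambda\rvert=1$ a.e. Thus $\{\gamma^\lambda\}_{\lambda\ge1}$ is uniformly $1$-Lipschitz with $\gamma^\lambda(0)=e$, so Arzel\`a--Ascoli produces $\lambda_k\to\infty$ with $\gamma^{\lambda_k}\to\hat\gamma$ locally uniformly and, after a further subsequence, $u^{\lambda_k}\rightharpoonup\hat u$ weakly-$*$ in $L^\infty_{\mathrm{loc}}$. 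A standard weak--strong convergence argument in the integrated equation $\gamma^{\lambda_k}(t)=e+\int_0^t\sum_iu_i^{\lambda_k}(s)X_i(\gamma^{\lambda_k}(s))\,ds$ (the coefficients $X_i(\cdot)$ are continuous, hence converge strongly along the uniformly convergent curves) shows that $\hat\gamma$ is admissible with control $\hat u$. Since length is weakly lower semicontinuous, $d_{CC}$ is continuous, and each $\gamma^{\lambda_k}$ realizes the distance between its endpoints, $\hat\gamma$ is itself length-minimizing; and the equality case of $\mathcal{L}(\hat\gamma)\le T^{1/2}\lVert\hat u\rVert_{L^2[0,T]}$ (together with $\lVert\hat u\rVert_{L^2[0,T]}\le\liminf_k\lVert u^{\lambda_k}\rVert_{L^2[0,T]}=\sqrt T$) forces $\lvert\hat u\rvert\equiv1$, i.e.\ $\hat\gamma$ is a unit-speed length-minimizer.

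The real content is that $\hat u$ can be taken to be a constant vector, which a generic blow-up limit need not be. If $\hat\gamma$ is a \emph{normal} extremal this is immediate: the sub-Riemannian Hamiltonian flow on a Carnot group has polynomial data, and exactly the computation behind Proposition \ref{prop: normal geodesic blow-up} shows that a further blow-up of $\hat\gamma$ has the constant control $\dot{\hat\gamma}(0)$. The hard, and main, case is when $\hat\gamma$ --- hence every $\gamma^{\lambda_k}$ --- is \emph{abnormal}. Then the Pontryagin maximum principle attaches to $\hat\gamma$ a nonzero dual curve $\xi$ in the Lie coalgebra solving the adjoint equation, satisfying the abnormality identities $\langle\xi(t),X_i(\hat\gamma(t))\rangle=0$ for all $i$, and --- because $\hat\gamma$ is minimizing --- the Goh condition $\langle\xi(t),[X_i,X_j](\hat\gamma(t))\rangle=0$. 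The mechanism, which is the substance of \cite{Monti18} (with the auxiliary estimates of \cite{Monti17}), is a structural dichotomy: after passing to one further blow-up if necessary, the limit curve is \emph{either} a constant-control curve \emph{or} an abnormal length-minimizer contained in a coset of a proper Carnot subgroup $H\subsetneq G$. Since blow-ups inside $H$ are blow-ups inside $G$ and $(\gamma^\lambda)^\mu=\gamma^{\lambda\mu}$, an induction on $\dim G$ together with a diagonal extraction then yields a single sequence $\lambda_k\to\infty$ along which $\gamma^{\lambda_k}$ converges to a constant-control curve, which is automatically length-minimizing (the projection $G\to G/[G,G]\cong\mathbb{R}^m$ is length non-increasing on horizontal curves and maps a constant-control curve to a Euclidean segment of the same length).

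The only nontrivial ingredient is this structural dichotomy, and it is the step I expect to be the main obstacle: one must transport the dual curve $\xi$ along the blow-up sequence even though it may degenerate (requiring a careful renormalization and a separate compactness for the covectors), and then combine the adjoint equation with the abnormality and Goh identities to show that, unless the control is already constant, the blown-up minimizer is trapped in a proper subgroup, whose dimension supplies the monotone quantity driving the induction. Since Theorem \ref{thm:Carnot blow-up} enters the present paper only as an input, I would cite \cite{Monti18} for this step and devote Appendix \ref{sec:appendix} to the routine but necessary point that the (rank-varying) nilpotent approximation of a sub-Riemannian manifold can be desingularized into a genuine Carnot group, after which Theorem \ref{thm:Carnot blow-up} applies and delivers Theorem \ref{thm:geodesic blow-up}.
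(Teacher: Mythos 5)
This theorem is not proved in the paper at all: it is quoted as an external input, with the reader referred to \cite[Proof of Theorem 1.1]{Monti18}, and your proposal likewise (and correctly) defers the one genuinely hard step---producing a blow-up with \emph{constant} control rather than merely some minimizing blow-up---to that same citation, so the two treatments coincide in substance. The parts you do supply (exact self-similarity of $d_{CC}$ under $\delta_\lambda$, the control identity $u^{\lambda}(t)=u(t/\lambda)$, Arzel\`a--Ascoli plus weak-$*$ compactness of controls, lower semicontinuity forcing $\lvert\hat u\rvert\equiv1$, the identity $(\gamma^{\lambda})^{\mu}=\gamma^{\lambda\mu}$ enabling the diagonal argument, and the fact that constant-control curves are minimizing via the abelianization) are all correct, with the only caveat that your paraphrase of the internal mechanism of \cite{Monti18} (Goh conditions and an induction on $\dim G$) is a plausible gloss rather than a verified account of their argument---harmless, since neither you nor the paper relies on it.
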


Below, we fix a sub-Riemannian structure $\F=\{X_1,\cdots,X_m\}$ on a smooth manifold $M$, a point $p\in M$, and a system of privileged coordinates $\phi$ at $p$. We recall that $\hat{\F}=\{\hat{X}_1,\cdots,\hat{X}_m\}$ denotes the nilpotent approximation of $\F$ at $p$ and $\hat{d}={d}_{\hat{\F}}$ its induced distance on $\R^n$.

\subsection{Smoothness of the minimal control for normal geodesics}\label{subsec: smooth minimal control}

For the sake of completeness, we provide a proof that the minimal control of a normal geodesic is smooth (see Section \ref{sec:prelin_sub-riem} for the definition of the minimal control).

\begin{lem}\label{lem:minimal_control_smooth}
    If $\gamma\colon(-\epsilon,\epsilon)\to M$ is a normal geodesic, then its minimal control is smooth.
\end{lem}

Before proving Lemma \ref{lem:minimal_control_smooth}, we recall a few simple linear algebra facts.

\begin{lem}\label{lem: linear algebra}
    Let $E$ be a finite dimensional vector space, $f=\{x_1,\cdots, x_m\}$ a family of vectors in $E$, denote $F=\mathrm{Span}_{\R}(f)$, introduce the maps:
\begin{equation*}
    \begin{cases*}
       A((u_1,\cdots,u_m)\in\R^m)=\sum_{i=1}^m u_i x_i \in E\\
       \phi(\lambda \in E^*) = \sum_{i=1}^m \lambda(x_i)x_i\in E\\
       A^{T}(\lambda\in E^*) =(\lambda(x_1),\cdots,\lambda(x_m))\in \R^m
    \end{cases*},
\end{equation*}
and denote $\lvert\cdot\rvert_{\F}$ the norm on $F$ defined by $\lvert v \rvert_{\F} \coloneqq \min\{\lvert u\rvert_{\R^m}, A(u) = v\}$ ($v\in F$). Then, $\phi(\lambda) = v$ implies $\lvert v\rvert_{\F} = \lvert A^T(\lambda)\rvert_{\R^m}$.
\end{lem}

\begin{rem}
    Observe that $A^T$ is the dual of $A$ (i.e. $\dotprod{A^T(\lambda)}{u}_{\R^m} = \lambda(A(u))$, for $u\in\R^m$); in particular, by duality, we have $\mathrm{Im}(A^T)=\mathrm{Ker}(A)^{\perp}$. The fact that $\lvert \cdot \rvert_{\F}$ is an inner product follows from the parallelogram identity.
\end{rem}

\begin{proof}
First of all, observe that $\phi = A\circ A^T$. Given $v\in F$, there exists $u\in\R^m$ such that $A(u)=v$. Decompose $u$ as $u = u_{\mathrm{row}} + u_{\mathrm{null}}$, where $u_{\mathrm{row}}\in \mathrm{Ker} (A)^\perp=\mathrm{Im}(A^T)$ and $u_{\mathrm{null}}\in \mathrm{Ker} (A)$. In particular, $A(u_{\mathrm{row}})=A(u)=v$. We have $u_{\mathrm{row}} = A^T(\lambda)$ for some $\lambda\in E^*$. Therefore, we have $v = A(u_{\mathrm{row}})=A(A^T(\lambda))=\phi(\lambda)$; which implies that $\mathrm{Im}(\phi) = F$.

Now, let us assume that $\phi(\lambda)=0$. Since $\phi = A\circ A^T$, we have $A^T(\lambda)\in \mathrm{Ker}(A)\cap\mathrm{Im}(A^T)$. However $\mathrm{Im}(A^T)=\mathrm{Ker}(A)^{\perp}$, so we necessarily have $A^T(\lambda)=0$, i.e. $\lambda\in F^{\perp}$; thus $\mathrm{Ker}(\phi) = F^{\perp}$.

Let us fix $v\in F$ and $u\in\R^m$ such that $A(u)=v$ and $\lvert u\rvert_{\R^m}=\lvert v\rvert_{\F}$. Observe that $A(u)=A(u_{\mathrm{row}})$ and $\lvert u_{\mathrm{row}}\rvert_{\R^m}\le \lvert u\rvert_{\R^m}$, where $u_{\mathrm{row}}$ is the orthogonal projection of $u$ onto $\mathrm{Ker} (A)^\perp=\mathrm{Im}(A^T)$. In particular, $\lvert v\rvert_{\F}\le \lvert u_{\mathrm{row}}\rvert_{\R^m}$, by definition of $\lvert \cdot\rvert_{\F}$. Therefore, since $\lvert u\rvert_{\R^m}=\lvert v\rvert_{\F}$, we necessarily have $u=u_{\mathrm{row}}$. Now observe that $A$ induces an isomorphism from $\mathrm{Ker}(A)^{\perp}=\mathrm{Im}(A)^T$ onto $F$. In particular, if we have $\phi(\lambda)=v$ (i.e.\, $A(A^T(\lambda))=A(u_{\mathrm{row}})$), then we have $A^T(\lambda) = u_{\mathrm{row}}$; hence $\lvert v\rvert_{\F} = \lvert u_{\mathrm{row}}\rvert_{\R^m} = \lvert A^T(\lambda)\rvert_{\R^m}$.
\end{proof}

\begin{proof}[Proof of Lemma \ref{lem:minimal_control_smooth}]
    As a result of \cite[Theorems 3.59 and 4.25]{ABB}, since $\gamma$ is a normal geodesic, it is smooth and satisfies:
    \[
    \dot{\gamma}(t) = \sum_{i=1}^m \dotprod{\lambda(t)}{X_i(\gamma(t))}X_i(\gamma(t)),
    \]
    where $\lambda(t) = \exp(t\Vec{H})(\lambda_0)\in T_{\gamma(t)}^*M$ ($t\in\R$, $\lambda_0\in T_{\gamma(0)}^*M$) is a normal Pontryagin extremal. In particular, thanks to Lemma \ref{lem: linear algebra}, $|\dot{\gamma}(t)|_{\gamma(t)} = |(\dotprod{\lambda(t)}{X_i(\gamma(t))})_{1\le i\le m}|_{\R^m}$. Therefore, by definition, $(\dotprod{\lambda}{X_i(\gamma})_{1\le i\le m}$ is the minimal control of $\gamma$. Finally, $(\dotprod{\lambda}{X_i(\gamma})_{1\le i\le m}$ is smooth since $\lambda$, $\gamma$, and $X_i$'s are smooth.
\end{proof}

\subsection{Blow-up of normal geodesics}\label{subsec: normal blowup}

\begin{proof}[Proof of Proposition \ref{prop: normal geodesic blow-up}]
    Observe that, making $\epsilon$ smaller if necessary, we may assume that $\gamma$ is length-minimizing and $\mathrm{Im}(\gamma)\subset\mathrm{Dom}(\phi)$, where $\phi$ is our system of privileged coordinates and $p$ is identified with $0$. Since $\gamma$ is a normal geodesic, we have $\dot{\gamma}(t)=\sum_{i=1}^mu_i(t)X_i(\gamma(t))$, where $u=(u_1,\cdots,u_m)$ is the minimal control of ${\gamma}$, which is smooth thanks to Lemma \ref{lem:minimal_control_smooth}. For every $\lambda>0$ and $t\in(-\lambda\epsilon,\lambda\epsilon)$, the following holds:
    \begin{equation*}
        \dot{\gamma}^{\lambda}(t)=\sum_{i=1}^mu_i^{\lambda}(t)X_i^{\lambda}(\gamma^{\lambda}(t)),\text{ where }X_i^{\lambda}\coloneqq\lambda^{-1}{\delta_{\lambda}}_{*}X_i\text{ and }u_i^{\lambda}\coloneqq u_i(\lambda^{-1}\cdot).
    \end{equation*}
    Observe that $\lim_{\lambda\to\infty}X_i^{\lambda}=\hat{X}_i$ (resp. $\lim_{\lambda\to\infty}u_i^{\lambda}=u_i(0)$) in the topology of smooth convergence on compact sets thanks to \cite[Lemma 10.58]{ABB} (resp. since the functions $u_i$ are smooth). We also have $\gamma^{\lambda}(0)=0$ for every $\lambda>0$. As a result of the continuity of solutions to ODEs w.r.t. to parameters (see \cite[Theorem 3.2]{Hartman}), $\gamma^{\lambda}$ converges locally uniformly on $\R$ to $\hat{\gamma}\colon t\in\R\to \exp(t\hat{v})(0)\in\R^n$. Finally, thanks to Theorem \ref{thm: uniform convergence of distances}, $\hat{\gamma}$ is a line in $(\R^n,\hat{\dist})$.
\end{proof}

Blow-ups of normal geodesics satisfy the following dilation identity.

\begin{lem}\label{lem: blow-up = diltation}
    Assume that $\hat{Y}$ satisfies $\delta_{\lambda}^*\hat{Y}=\lambda^{-1}\hat{Y}$ ($\lambda>0$), then we have the following property:
    \begin{equation*}
        \forall \lambda>0,\ \forall t\in\R,\ e^{t\hat{Y}}(0)=\delta_{\lambda}(e^{t\lambda^{-1}\hat{Y}}(0)).
    \end{equation*}
    In particular, given any $v\in \D_p$ and $t\in\R$, we have $e^{t\hat{v}}(0)=\delta_t(e^{\hat{v}}(0))$, where $\hat{v}\in\mathfrak{X}(\R^n)$ is introduced in Proposition \ref{prop: normal geodesic blow-up}.
\end{lem}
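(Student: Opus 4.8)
The plan is to deduce both claims from the naturality of the flow under the diffeomorphisms $\delta_\lambda$, together with the homogeneity hypothesis on $\hat Y$; there is essentially no analysis involved.

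First I would recast the hypothesis in push-forward form: since $\delta_\lambda^{-1}=\delta_{\lambda^{-1}}$, the relation $\delta_\lambda^{*}\hat Y=\lambda^{-1}\hat Y$ is equivalent to $(\delta_\lambda)_{*}\hat Y=\lambda\hat Y$ for every $\lambda>0$. Next I would invoke the standard identity $\psi\circ e^{tY}=e^{t\,\psi_{*}Y}\circ\psi$, valid for any diffeomorphism $\psi$ and any vector field $Y$ on the set where the flow is defined, and apply it with $\psi=\delta_\lambda$ and $Y=\hat Y$ to get $\delta_\lambda\circ e^{t\hat Y}=e^{\lambda t\hat Y}\circ\delta_\lambda$. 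Evaluating at the origin and using $\delta_\lambda(0)=0$ yields $\delta_\lambda(e^{t\hat Y}(0))=e^{\lambda t\hat Y}(0)$; substituting $t\lambda^{-1}$ for $t$ then gives exactly $e^{t\hat Y}(0)=\delta_\lambda(e^{t\lambda^{-1}\hat Y}(0))$.

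One point I would address along the way is that the orbit of $\hat Y$ through $0$ is defined for all $t\in\R$, so that the quantifier ``$\forall t\in\R$'' is meaningful: if $(a,b)\ni 0$ were the maximal interval of definition of $t\mapsto e^{t\hat Y}(0)$, the identity just obtained would force $(a,b)=(\lambda a,\lambda b)$ for every $\lambda>0$, hence $a=-\infty$ and $b=+\infty$. (In the application one has $\hat Y=\hat v$ generating a line in $(\R^n,\hat d)$, so completeness is automatic.)

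For the ``in particular'' clause, I would note that $\hat v=\sum_{i=1}^m v_i^{*}\hat X_i$ has constant coefficients and that each $\hat X_i$ satisfies $\delta_\lambda^{*}\hat X_i=\lambda^{-1}\hat X_i$ by \eqref{eq:nilp approx}; since $\delta_\lambda^{*}$ is $\R$-linear on vector fields, $\delta_\lambda^{*}\hat v=\lambda^{-1}\hat v$, so the hypothesis of the first part holds with $\hat Y=\hat v$. Taking $\lambda=t$ for $t>0$ gives $e^{t\hat v}(0)=\delta_t(e^{\hat v}(0))$; the case $t=0$ is trivial, and $t<0$ follows identically using $\lambda=|t|$ and $e^{-\hat v}(0)$. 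I do not expect a genuine obstacle here: the only thing requiring care is keeping the $\delta_\lambda^{*}$ versus $(\delta_\lambda)_{*}$ convention straight and the (routine) completeness remark.
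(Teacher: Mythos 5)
Your argument is correct and is essentially the paper's own proof: both rest on the conjugation identity for flows under the diffeomorphism $\delta_{\lambda}$ (you phrase it via push-forwards, $\delta_{\lambda}\circ e^{t\hat{Y}}=e^{\lambda t\hat{Y}}\circ\delta_{\lambda}$, whereas the paper writes the equivalent pull-back identity $e^{t\delta_{\lambda}^{*}\hat{Y}}(0)=\delta_{\lambda}^{-1}(e^{t\hat{Y}}(\delta_{\lambda}(0)))$), combined with the homogeneity of $\hat{Y}$ and $\delta_{\lambda}(0)=0$. Your additional checks of completeness of the flow through $0$ and of the sign conventions for $t<0$ are welcome details that the paper leaves implicit, but they do not change the route.
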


\begin{proof}
 Since  $\delta_{\lambda}(0)=0$  and $\delta_{\lambda}^*\hat{Y}=\lambda^{-1}\hat{Y}$, we have $e^{t\delta_{\lambda}^*\hat{Y}}(0)=\delta_{\lambda}^{-1}(e^{t\hat{Y}}(\delta_{\lambda}(0)))$  by definition of a pull-back vector field. We conclude by observing that if $v\in \D_p$, then $\hat{v}$ satisfies $\delta_{\lambda}^*\hat{v}=\lambda^{-1}\hat{v}$, where $\lambda>0$.
\end{proof}

Now we prove Corollary \ref{lem: normal_line}.

\begin{proof}[Proof of Corollary \ref{lem: normal_line}] Recall that, by Remark \ref{rem: nilp distrib at 0 = distrib at p}, $(\mathcal{D}_p,\langle\cdot,\cdot\rangle_p)$ is isometric to $(\hat{\mathcal{D}}_0,\langle\cdot,\cdot\rangle_0)$. As a result, $\hat v_0$ gives rise to a vector $v\in \mathcal{D}_p$ with identical minimal control $(v_1^*,\dots,v_m^*)$. By Proposition \ref{prop: normal geodesics}, there exists a normal geodesic $\gamma \colon (-\varepsilon,\varepsilon)\to M$ such that $\gamma(0)=p$ and $\dot\gamma(0)=v$. Thanks to Proposition \ref{prop: normal geodesic blow-up}, blowing up $\gamma$ gives rise to a line $\hat\gamma \colon \mathbb{R}\to(\mathbb{R}^n,\hat d)$ satisfying $\hat\gamma(t)=e^{t\hat v}(0)$, where $\hat v=\sum_{i=1}^m v_i^*\,\hat X_i \in \mathfrak X(\mathbb{R}^n)$. In particular, $\hat\gamma(0)=0$ and
\[
\frac{d}{dt}\hat\gamma(0)
=\hat v(0)
=\sum_{i=1}^m v_i^*\,\hat X_i(0)
=\hat v_0,
\]
where the last equality follows since $v$ and $\hat v_0$ have identical minimal control. Finally, recall that the vector fields $\hat X_i$ $(1\le i\le m)$ satisfy $\delta_\lambda^*\hat X_i=\lambda^{-1}\hat X_i$. Hence, thanks to Lemma \ref{lem: blow-up = diltation}, for every $t\in\mathbb{R}$ and $\lambda>0$, we have $\delta_\lambda\bigl(e^{t\hat v}(0)\bigr)=e^{\lambda t\hat v}(0)$. Consequently, for every $\lambda>0$ and $t\in\mathbb{R}$, $\hat\gamma_\lambda(t)
=\delta_\lambda(e^{\lambda^{-1}t\hat v}(0))
=e^{t\hat v}(0)
=\hat\gamma(t)$, which concludes the proof.
\end{proof}

\subsection{Carnot group lifting}\label{subsec: Carnot lift}

Thanks to \cite[Proposition 5.17]{Bellaiche_96}, $\mathfrak{g}_p\coloneqq\mathrm{Lie}_{\R}(\hat{X}_1,\cdots,\hat{X}_m)\le \mathfrak{X}(\R^n)$ is a finite-dimensional stratified Lie algebra and $G_p\coloneqq\{\exp(X),X\in \mathfrak{g}_p\}\le\mathrm{Diff}(\R^n)$ is its associated Carnot group. The sub-Riemannian structure of $G_p$ is generated by the family of left-invariant vector fields $\xi_1,\cdots,\xi_m\in\mathfrak{X}(G_p)$  defined in the following way:
\begin{equation*}
   \forall g \in G_p,\ \xi_i(g)\coloneqq\frac{d}{dt}_{\lvert t=0}g\circ \exp(-t\hat{X}_i)=d_eL_g(-\hat{X}_i).
\end{equation*}
Since every pair of points in $(\R^n,\hat{d})$ may be joined by an admissible curve with piecewise constant control (see the last remark of \cite[Section 2.5]{Bellaiche_96}), the map $\pi\colon g\in G_p\to g^{-1}(0)\in \R^n$ is surjective. We remark that one may alternatively use the natural right action of $G_p$ on $\mathbb{R}^n$.

\begin{lem}\label{lem: push forward vector in Carnot lift}
	The following properties hold:\\
    (1) $\pi_*\xi_i=\hat{X}_i$;\\
    (2) $\delta_{\lambda}\circ\pi=\pi\circ\hat\delta_{\lambda}$,\\
    where $\hat\delta_{\lambda}$ denotes the dilation on $G_p$ arising from the stratification $\mathfrak{g}_p=\oplus_{i=1}^{r}\mathfrak{g}_p^i$.
\end{lem}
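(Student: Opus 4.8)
\textbf{Proof proposal for Lemma~\ref{lem: push forward vector in Carnot lift}.}

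The plan is to verify both identities by direct computation using the definitions of $\pi$, $\xi_i$, and the left-invariant structure on $G_p$, together with the known behavior of $\hat\delta_\lambda$ on $\mathfrak{g}_p$. For part~(1), I would fix $g\in G_p$ and compute $\pi_*\xi_i$ at the point $\pi(g)=g^{-1}(0)$. Writing out $\xi_i(g)=\frac{d}{dt}\big|_{t=0}\, g\circ\exp(-t\hat X_i)$ and pushing forward by $\pi$ gives $d\pi_g(\xi_i(g))=\frac{d}{dt}\big|_{t=0}\,\pi\big(g\circ\exp(-t\hat X_i)\big)=\frac{d}{dt}\big|_{t=0}\,\big(g\circ\exp(-t\hat X_i)\big)^{-1}(0)=\frac{d}{dt}\big|_{t=0}\,\exp(t\hat X_i)\big(g^{-1}(0)\big)$, where the last equality uses $(g\circ h)^{-1}=h^{-1}\circ g^{-1}$ and that $\exp(-t\hat X_i)^{-1}=\exp(t\hat X_i)$ as diffeomorphisms of $\R^n$. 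But this last expression is exactly the value of the flow of $\hat X_i$ through $g^{-1}(0)=\pi(g)$ at time $t$, so its $t$-derivative at $0$ is $\hat X_i(\pi(g))$. Since $\pi$ is surjective this shows $\pi_*\xi_i=\hat X_i$ as a vector field on $\R^n$ (the sign bookkeeping from the two minus signs is what makes it work out, so I would be careful to present that cleanly).

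For part~(2), I would use that the dilation $\hat\delta_\lambda$ on $G_p$ is the Lie group automorphism whose differential at $e$ acts on $\mathfrak{g}_p^i$ by multiplication by $\lambda^i$; concretely, $\hat\delta_\lambda(\exp X)=\exp\big((\hat\delta_\lambda)_*X\big)$ for $X\in\mathfrak{g}_p$, where $(\hat\delta_\lambda)_*$ scales each stratum $\mathfrak g_p^i$ by $\lambda^i$. On the other hand, the graded vector fields satisfy $\delta_\lambda^*\hat X_i=\lambda^{-1}\hat X_i$, i.e. $\delta_\lambda$ conjugates the flow of $\hat X_i$ to the flow of $\lambda^{-1}\hat X_i$; more generally for $X\in\mathfrak g_p^i$ one has $\delta_\lambda\circ\exp(X)\circ\delta_\lambda^{-1}=\exp(\lambda^{-i}X)$ as diffeomorphisms of $\R^n$ (this is the statement, in the group of diffeomorphisms, that the stratification is compatible with the dilations — it follows inductively from $\delta_\lambda^*\hat X_i=\lambda^{-1}\hat X_i$ and the fact that brackets of weight-$i$ and weight-$j$ fields have weight $i+j$). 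Then for $g=\exp(X_1)\cdots\exp(X_N)\in G_p$ (a product of exponentials of homogeneous elements, which suffices since such products generate $G_p$), I would compute $\pi(\hat\delta_\lambda g)=(\hat\delta_\lambda g)^{-1}(0)$. Writing $\hat\delta_\lambda g=\exp((\hat\delta_\lambda)_*X_1)\cdots\exp((\hat\delta_\lambda)_*X_N)$ and inverting, this becomes a composition of flows $\exp(-(\hat\delta_\lambda)_*X_j)$ applied to $0$; using the conjugation identity above and $\delta_\lambda(0)=0$, each factor $\exp(-\lambda^{i_j}X_j)$ at the group level corresponds at the $\R^n$ level to $\delta_\lambda\circ\exp(-X_j)\circ\delta_\lambda^{-1}$ after matching the weight scalings, so that $\pi(\hat\delta_\lambda g)=\delta_\lambda\big(g^{-1}(0)\big)=\delta_\lambda(\pi(g))$.

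I expect the main obstacle to be (2), specifically pinning down precisely the compatibility between the group-level dilation $\hat\delta_\lambda$ acting on $\mathfrak g_p$ by $\lambda^i$ on $\mathfrak g_p^i$ and the ambient dilation $\delta_\lambda$ acting on $\R^n$, and doing the inversion of a product of exponentials cleanly enough that the weight exponents cancel. A slicker route that avoids the inversion bookkeeping is to differentiate: both sides of (2) are diffeomorphisms of $\R^n$ (for fixed $\lambda$), so it is enough to check they agree at $0$ — which is immediate since $\delta_\lambda(\pi(e))=\delta_\lambda(0)=0=\pi(\hat\delta_\lambda e)$ — and that they intertwine a generating set of flows in the same way; the latter reduces exactly to the weight-scaling identity $\delta_\lambda\circ\exp(t\hat X_i)=\exp(\lambda t\,\hat X_i)\circ\delta_\lambda$ combined with part~(1). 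I would write the argument in this differential/flow form rather than via explicit coordinates, citing \cite[Section 5.4]{Bellaiche_96} for the construction of $\hat\delta_\lambda$ and the graded structure. Part~(1) I expect to be entirely routine once the sign conventions are fixed.
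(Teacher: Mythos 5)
Your proof follows essentially the same route as the paper's: part (1) is the identical flow computation $\pi(g\circ\exp(-t\hat X_i))=\exp(t\hat X_i)(g^{-1}(0))$, and for part (2) both arguments reduce to the weight-one generators via $\delta_\lambda\circ\exp(t\hat X_i)=\exp(\lambda t\,\hat X_i)\circ\delta_\lambda$ and propagate over $G_p$ using that it is generated by $\exp(\mathfrak g_p^1)$ (the paper does this by induction on a product $g=g_1\cdots g_k$ with $g_j\in\exp(\mathfrak g_p^1)$, which also spares it from needing the higher-stratum homogeneity your first route invokes). Two harmless slips to fix when writing it up: for $X\in\mathfrak g_p^i$ the conjugation identity reads $\delta_\lambda\circ\exp(X)\circ\delta_\lambda^{-1}=\exp(\lambda^{i}X)$, not $\exp(\lambda^{-i}X)$, and $\delta_\lambda\circ\pi$, $\pi\circ\hat\delta_\lambda$ are maps $G_p\to\R^n$ rather than diffeomorphisms of $\R^n$ (though checking agreement at $e$ and the intertwining of the generating flows still suffices, exactly as you say).
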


\begin{proof}
    (1) Let us fix $g\in G_p$ and observe that:
    \begin{equation*}
        d_g\pi(\xi_i(g))=\frac{d}{dt}_{\lvert t=0}\pi(g\circ \exp(-t\hat{X}_i))=\frac{d}{dt}_{\lvert t=0}\exp(t\hat{X}_i)\circ g^{-1}(0)=\hat{X}_i(g^{-1}(0))=\hat{X}_i(\pi(g)).
    \end{equation*}
    (2) Now assume $\lambda>0$ and fix any element $g=\exp(X)\in \exp(\mathfrak{g}_p^1)$. We have:
    \begin{equation*}
        \pi(\hat\delta_{\lambda}g)=\pi(\exp(\lambda X))=\exp(-\lambda X)(0)=\delta_{\lambda}(\exp(-X)(0))=\delta_{\lambda}(\pi(g)),
    \end{equation*}
    where the second-to-last equality holds thanks to Lemma \ref{lem: blow-up = diltation}. To conclude, by \cite[Lemma 1.40]{FS_book}, $G_p$ is generated by $\exp(\mathfrak{g}_p^1)$; thus, any $g\in G_p$ can be expressed as $g=g_1...g_k$ with $g_i\in\exp(\mathfrak{g}_p^1)$. We prove (2) by induction on $k$ and suppose it holds for $k-1$. Denoting $h=g_1...g_{k-1}$, and observing that $\hat\delta_{\lambda}(g)=\hat\delta_\lambda(h)\hat\delta_\lambda(g_k)$, we have:
    \begin{equation*}
    \pi(\hat\delta_{\lambda}(g))=\hat{\delta}_\lambda(g_k^{-1}) \cdot \pi(\hat{\delta}_\lambda(h))=\hat{\delta}_\lambda(g_k^{-1}) \cdot \delta_\lambda(\pi(h))=\delta_\lambda(g_k^{-1}\cdot \pi(h))=\delta_{\lambda}(\pi(g)),
    \end{equation*}
where we used the induction hypothesis for the second equality and the property $\hat{\delta}_{\lambda}(g)\cdot\delta_{\lambda}(x)=\delta_{\lambda}(g\cdot x)$ ($x\in\R^n$) for the third equality.
\end{proof}

\begin{defn}
   Let $\gamma\colon I\to \R^n$ be an admissible curve in $(\R^n,\hat\F)$ with minimal control $u\in L^{\infty}(I,\R^m)$. A horizontal lift of $\gamma$ is an admissible curve $\overline{\gamma}\colon I\to G_p$ admitting $u$ as a control (w.r.t. $\{\xi_1,\cdots,\xi_m\}$) and such that $\pi(\overline{\gamma})=\gamma$.
\end{defn}

\begin{prop}\label{prop: horizontal lift}
    Let $\gamma\colon I\to \R^n$ be an admissible curve in $(\R^n,\hat\F)$. Then the following hold.\\
    (1) $\gamma$ admits a horizontal lift $\bar{\gamma}:I\to G_p$.\\
    (2) If $\gamma$ is length-minimizing, then any horizontal lift is also length-minimizing.\\
    (3) If there is an admissible curve $\overline{\gamma}$ in $G_p$ with constant control such that $\pi(\overline{\gamma})=\gamma$, then $\gamma$ admits a constant control.
\end{prop}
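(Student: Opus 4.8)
The plan is to prove the three parts in order, with part~(1) — the global existence of a horizontal lift — carrying essentially all the weight; parts~(2) and~(3) then follow quickly from part~(1) together with Lemma~\ref{lem: push forward vector in Carnot lift}(1).

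For (1), fix $t_0\in I$ and, using the surjectivity of $\pi$, pick $g_0\in G_p$ with $\pi(g_0)=\gamma(t_0)$. I would then solve the Carath\'eodory initial value problem on $G_p$
\begin{equation*}
\dot{\overline{\gamma}}(t)=\sum_{i=1}^m u_i(t)\,\xi_i(\overline{\gamma}(t)),\qquad \overline{\gamma}(t_0)=g_0,
\end{equation*}
where $u\in L^{\infty}(I,\R^m)$ is the minimal control of $\gamma$. The key technical point is that this solution is defined on the whole interval $I$, not merely locally: since the $\xi_i$ are left-invariant on the simply connected nilpotent Lie group $G_p$, in exponential coordinates $G_p$ is diffeomorphic to a Euclidean space and the $\xi_i$ become polynomial vector fields of graded (triangular) form, so the equation integrates level by level by quadratures with no finite-time blow-up (equivalently, a time-dependent left-invariant vector field with $L^{\infty}$ coefficients generates a complete flow). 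By construction $\overline{\gamma}$ is admissible with control $u$. Applying $d\pi$ and using $\pi_{*}\xi_i=\hat X_i$ from Lemma~\ref{lem: push forward vector in Carnot lift}(1), the curve $t\mapsto\pi(\overline{\gamma}(t))$ is an absolutely continuous solution of $\dot x(t)=\sum_i u_i(t)\hat X_i(x(t))$ with $x(t_0)=\gamma(t_0)$; since the $\hat X_i$ are smooth (hence locally Lipschitz), Carath\'eodory uniqueness forces $\pi(\overline{\gamma})=\gamma$ on $I$, so $\overline{\gamma}$ is a horizontal lift.

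For (2), I would first record that $\pi$ is $1$-Lipschitz from $(G_p,d_{G_p})$ to $(\R^n,\hat d)$: any admissible curve in $G_p$ with minimal control $v$ projects, again by $\pi_{*}\xi_i=\hat X_i$, to an admissible curve in $\R^n$ admitting $v$ as a control, hence of no greater length, and taking infima yields $\hat d(\pi(g),\pi(h))\le d_{G_p}(g,h)$. Now let $\gamma\colon[a,b]\to\R^n$ be length-minimizing and $\overline{\gamma}$ a horizontal lift. On one hand, $u$ is a control for $\overline{\gamma}$, so $\mathcal L(\overline{\gamma})\le\int_a^b|u(t)|\,\di t=\mathcal L(\gamma)=\hat d(\gamma(a),\gamma(b))$; on the other hand $\mathcal L(\overline{\gamma})\ge d_{G_p}(\overline{\gamma}(a),\overline{\gamma}(b))\ge\hat d(\pi(\overline{\gamma}(a)),\pi(\overline{\gamma}(b)))=\hat d(\gamma(a),\gamma(b))$. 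Hence all these inequalities are equalities, so $\mathcal L(\overline{\gamma})=d_{G_p}(\overline{\gamma}(a),\overline{\gamma}(b))$, i.e. $\overline{\gamma}$ is length-minimizing.

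Part (3) is immediate from Lemma~\ref{lem: push forward vector in Carnot lift}(1): if $\overline{\gamma}$ is admissible in $G_p$ with constant control $c\in\R^m$ and $\pi(\overline{\gamma})=\gamma$, then differentiating $\gamma=\pi\circ\overline{\gamma}$ gives $\dot\gamma(t)=d\pi(\dot{\overline{\gamma}}(t))=\sum_{i=1}^m c_i\hat X_i(\gamma(t))$ for a.e.\ $t$, so $\gamma$ itself admits the constant control $c$. The main obstacle I anticipate lies entirely in part~(1): checking carefully that the lifted ODE with merely $L^{\infty}$ coefficients has a solution on all of $I$ rather than only on a subinterval — this is exactly where the nilpotent/graded structure of $G_p$ enters — while everything else is bookkeeping with $\pi_{*}\xi_i=\hat X_i$ and the definitions of length and distance.
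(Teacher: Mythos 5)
Your proposal is correct and follows essentially the same route as the paper: lift the minimal control to the left-invariant frame on $G_p$, use $\pi_*\xi_i=\hat X_i$ and uniqueness of Carath\'eodory solutions to identify $\pi(\overline\gamma)$ with $\gamma$, and sandwich the lengths for part (2) (the paper gets the reverse inequality by projecting a specific minimizer in $G_p$, you package the same computation as the $1$-Lipschitz property of $\pi$ — the same argument). Your extra care about global-in-time existence of the lifted ODE, via the graded polynomial form of left-invariant fields on the nilpotent group, is a worthwhile detail that the paper dispatches by citation.
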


\begin{proof}
    (1) Let $\gamma:I\to \mathbb{R}^n$ be an admissible curve in $(\R^n,\hat\F)$ with minimal control $u\in L^{\infty}(I,\R^m)$ and let us fix $t_0\in I$. Since $\pi$ is surjective, there exists $g\in G_p$ such that $\pi(g)=\gamma(t_0)$. Thanks to \cite[Theorem 2.15]{ABB}, there exists a unique curve $\overline{\gamma}\colon I\to G_p$ such that $\overline{\gamma}(t_0)=g$ and, for a.e. $t\in I$, we have:
    \begin{equation*}
        \dot{\overline{\gamma}}(t)=\sum_{i=1}^mu_i(t)\xi_i(\overline{\gamma}(t)).
    \end{equation*}
    If we denote $\alpha\coloneqq\pi(\overline{\gamma})$, then $\alpha(t_0)=\gamma(t_0)$ and, for a.e. $t\in I$, we have:
    \begin{equation*}
        \dot{\alpha}(t)=d_{\overline{\gamma}(t)}\pi(\dot{\overline{\gamma}}(t))=\sum_{i=1}^mu_i(t)d_{\overline{\gamma}(t)}\pi(\xi_i(\overline{\gamma}(t)))=\sum_{i=1}^mu_i(t)\hat{X}_i(\pi(\overline{\gamma}(t)))=\sum_{i=1}^mu_i(t)\hat{X}_i(\alpha(t)),
    \end{equation*}
    where we used Lemma \ref{lem: push forward vector in Carnot lift} (1) for the third equality. Therefore, thanks to \cite[Theorem 2.15]{ABB}, we have $\gamma(t)=\alpha(t)=\pi(\overline{\gamma}(t))$ for every $t\in I$, i.e. $\overline{\gamma}$ is a horizontal lift of $\gamma$.
    
    (2,3) Now, assume that $\gamma$ is length-minimizing on the interval $[a,b]$ and that $\overline{\gamma}$ is a horizontal lift. We need to show that $\overline{\gamma}$ is also length-minimizing on $[a,b]$. Observe that, by definition of the sub-Riemannian distance on $G_p$, and since $u$ is the minimal control of $\gamma$, we have:
    \begin{equation}\label{eq: lift decreases distance}
        \dist_{G_p}(\overline{\gamma}(a),\overline{\gamma}(b))\le \mathcal{L}(\overline{\gamma})\le \int_a^b\lvert u\rvert_{\R^m}=\mathcal{L}(\gamma)=\hat\dist(\gamma(a),\gamma(b)).
    \end{equation}
    Now, let $\overline{\alpha}\colon[a,b]\to G_p$ be an admissible length-minimizing curve from $\overline{\gamma}(a)$ to $\overline{\gamma}(b)$ with minimal control $v\in L^{\infty}([a,b],\R^m)$. In particular, $\alpha\coloneqq\pi(\overline{\alpha})$ satisfies $\alpha(a)=\gamma(a)$ and $\alpha(b)=\gamma(b)$. Moreover, thanks to Lemma \ref{lem: push forward vector in Carnot lift} (1), $\alpha$ is admissible in $(\R^n,\hat\dist)$ with control $v$ (which also proves part (3) of the proposition). Therefore, we have:
    \begin{equation}\label{eq: eq: lift increases distance}
        \hat\dist(\gamma(a),\gamma(b))\le\mathcal{L}(\alpha)\le\int_a^b\lvert v\rvert_{\R^m}=\dist_{G_p}(\overline{\gamma}(a),\overline{\gamma}(b)).
    \end{equation}
    As a result of \eqref{eq: lift decreases distance} and \eqref{eq: eq: lift increases distance}, we have $\dist_{G_p}(\overline{\gamma}(a),\overline{\gamma}(b))=\mathcal{L}(\overline{\gamma})$, i.e. $\overline{\gamma}$ is length-minimizing on $[a,b]$.
\end{proof}

\subsection{Blow-up of abnormal geodesics}\label{subsec: abnormal blowup}

\begin{proof}[Proof of Theorem \ref{thm:geodesic blow-up}]
    First, thanks to the theorem of Arzel\`a--Ascoli  and Theorem \ref{thm: uniform convergence of distances}, there exists a sequence $\eta_k\to\infty$ such that $\gamma^{\eta_k}$ converges locally uniformly to a ray $\hat{\gamma}$ emanating from $0$ in $(\R^n,\hat{d})$. Thanks to Proposition \ref{prop: horizontal lift}, $\hat{\gamma}$ admits a horizontal lift $\overline{\gamma}\colon \R_{\ge0}\to G_p$ which is also length-minimizing and satisfies $\overline{\gamma}(0)=e\in G$. Using Theorem \ref{thm:Carnot blow-up}, there exists a sequence $\xi_k\to\infty$ such that $\overline{\gamma}^{\xi_k}$ converge locally uniformly to a length-minimizing curve $\overline{\gamma}_{\infty}\colon \R_{\ge0}\to G_p$ with constant control and $\overline{\gamma}_{\infty}(0)=e$. Lemma \ref{lem: push forward vector in Carnot lift}(2) together with $\pi(\overline{\gamma})=\hat{\gamma}$ implies:
    \begin{equation*}
        \pi(\overline\gamma^{\xi_k})=\pi(\hat\delta_{\xi_k}(\overline{\gamma}(\xi_k^{-1}\cdot))={\delta}_{\xi_k}(\pi(\overline{\gamma}(\xi_k^{-1}\cdot)))={\delta}_{\xi_k}(\hat{\gamma}(\xi_k^{-1}))=\hat{\gamma}^{\xi_k}.
    \end{equation*}
    Hence, $\hat{\gamma}^{\xi_k}$ converge locally uniformly to the ray $\hat{\gamma}_{\infty}\coloneqq\pi(\overline{\gamma}_{\infty})$, which satisfies $\hat{\gamma}_{\infty}(0)=0$. Since $\overline{\gamma}_{\infty}$ admits a constant control, the same holds for $\hat{\gamma}_{\infty}$ thanks to Proposition \ref{prop: horizontal lift}(3). Finally, proceeding with a diagonal argument as in \cite[Proposition 3.7]{Monti17}, there exists a sequence $\lambda_k\to\infty$ such that $\gamma^{\lambda_k}$ converges locally uniformly to $\hat{\gamma}_{\infty}$, which concludes the proof.
\end{proof}

\bibliographystyle{abbrv}
\bibliography{ref.bib}

\end{document}